\documentclass[12pt, reqno]{amsart}
\usepackage{amsmath, amsthm, amscd, amsfonts, amssymb, graphicx, color}
\usepackage[bookmarksnumbered, colorlinks, plainpages]{hyperref}

\newtheorem{theorem}{Theorem}[section]
\newtheorem{lemma}[theorem]{Lemma}

\newtheorem{corollary}[theorem]{Corollary}
\theoremstyle{definition}
\newtheorem{definition}[theorem]{Definition}

\newtheorem{problem}[theorem]{Problem}
\theoremstyle{remark}

\numberwithin{equation}{section}

\begin{document}
\setcounter{page}{1}

\title[A Closed subspace of a Gateaux differentiability space  ]{A Closed subspace of a Gateaux differentiability space is a Gateaux differentiability space : over 46 years of open problem solved}

\author[S. Shang]{Shaoqiang Shang$^1$$^{*}$  }

\address{$^{1}$  College of mathematics and science,
Harbin Engineering University, Harbin 150001, P. R. China}
\email{\textcolor[rgb]{0.00,0.00,0.84}{sqshang@163.com}}


\subjclass[2010]{Primary 46G05, 46B20, 26E15, 58C20.}

\keywords{ G$\mathrm{\hat{a}}$teaux differentiability space, weak Asplund space, closed subspace,  slice, weak$^{*}$ exposed point}

\date{Received: xxxxxx; Revised: yyyyyy; Accepted: zzzzzz.
\newline \indent $^{*}$Corresponding author.}

\begin{abstract}
This paper establishes for the first time the iterative and rigid theory of weak$^{*}$
slices within a non-metric framework, demonstrating that dual convex sets under the pure weak$^{*}$
topology can achieve localization, diameter control, and fine structural analysis. It fundamentally transforms the traditional understanding of the geometric properties of weak$^{*}$
topology and thereby pioneers a new direction in non-metric weak$^{*}$
slice geometry. By developing a new technique involving intricate manipulations of weak$^{*}$
slices and a carefully designed iterative selection process, we prove that if
$M$  is a closed subspace of a G$\mathrm{\hat{a}}$teaux differentiability space
$X$, then $M$ is a G$\mathrm{\hat{a}}$teaux differentiability space. As a Corollary, we get that if $X$ is a weak Asplund space and $M$ is a closed subspace of $X$, then
$X$ is a G$\mathrm{\hat{a}}$teaux differentiability space.
Thus, we definitively solve an open problem raised 46 years ago by D.G. Larman and R.R. Phelps (J. London Math. Soc., 20(1979), 115--127).

\end{abstract} \maketitle

\section{Introduction and preliminaries}

Geometric intuition drawn from finite-dimensional analysis often fails in infinite-dimensional settings. While smoothness and regularity are well understood for finite-dimensional manifolds, Euclidean structural rigidity no longer persists in general Banach spaces. A fundamental open problem in infinite-dimensional convex geometry asks whether differentiability properties inherit to closed subspaces.
It has long been conjectured that delicate geometric properties such as G$\mathrm{\hat{a}}$teaux differentiability do not survive closed subspace restrictions, as their characteristic geometric features vanish under subspace containment. Following the pioneering work of Larman and Phelps in 1979, most existing literature has sought to confirm this phenomenon by constructing counterexamples. Nonetheless, this conventional belief has never been rigorously justified. Therefore, we may reasonably conclude that G$\mathrm{\hat{a}}$teaux differentiability spaces are hereditary; the failure of relevant research merely results from the ineffectiveness of traditional methods for studying G$\mathrm{\hat{a}}$teaux differentiability spaces.
In this article, we settle this long-standing open problem. We prove that every closed subspace of a G$\mathrm{\hat{a}}$teaux differentiability space is itself a G$\mathrm{\hat{a}}$teaux differentiability space. This result resolves the classical question initiated by Larman and Phelps, and clarifies the interplay between local geometric regularity and the global structural properties of infinite-dimensional convex sets.
Our approach introduces a new framework of metric-free weak$^{*}$ slice geometry, which eliminates the metric and separability assumptions inherent in traditional arguments. By developing iterative weak$^{*}$ slice constructions, we devise refined tools to study the fine geometric structure of dual convex sets within the pure weak$^{*}$ topology. This enables a fully topological characterization of G$\mathrm{\hat{a}}$teaux differentiability without any metric hypothesis.
Beyond our main theorem, our work demonstrates that stable geometric ordering and structural rigidity hold even for nonseparable spaces, which are commonly regarded as pathological in infinite-dimensional geometry. The techniques established here provide a unified framework for studying differentiability, duality, and hereditary phenomena in Banach spaces, with potential applications to Banach space classification and modern convex geometric analysis.

\par Let $X$ denote
a real Banach space and  $X^{*}$ denote the dual space of $X$. Let $S(X)$ and $B(X)$ denote the unit sphere and
the unit ball of $X$, respectively. The
set $B(x, r)$ denotes the closed ball with a centered at $x$ and a radius of $r$.
Let $C^{*}$ be a subset of $X^{*}$ and define $\sigma_{C^{*}}(x)=\sup \left\{ {z^*}(x):{z^*} \in {C^*}\right\}$.
Pick $\alpha\in (0,+\infty)$. Let $S\left(x, \alpha,C^{*}\right)$  denote a weak$^{*}$ open slice and $S\left(x,C^{*}, \alpha\right)$  denote a weak$^{*}$ slice, where
\[
S\left(x, \alpha,C^{*}\right) = \left\{ {x^*} \in {C^{*}}:{x^*}(x) > \sigma_{C^{*}}(x)  - \alpha \right\}
\]
and
\[
S\left(x,C^{*}, \alpha\right) = \left\{ {x^*} \in {C^{*}}:{x^*}(x) \geq \sigma_{C^{*}}(x)  - \alpha \right\}.
\]
Let $D$ be a nonempty open convex subset of $X$ and $f$ be a
real-valued continuous convex function on $D$. The continuous convex function $f$ is said to be G$\mathrm{\hat{a}}$teaux
differentiable at the point $x$ in $D$ if there exists $df(x)\in X^{*}$ such that
the limit
\[
\left\langle df(x), y\right\rangle = \mathop {\lim }\limits_{t \to 0} \frac{{f(x + ty) - f(x)}}{t}~~~~~~~~~~~~~~~~~~\eqno~~~~~~~~~~~~~~~~~~~~~~~~~~(*)
\]
exists for every $y \in X$. If the difference quotient in $(*)$ converges to $\langle df(x), y\rangle$ uniformly
for $y$ in $B(X)$, then $f$ is called Frechet differentiable at $x$ (see [5]).

\begin{definition}(see [1])
A Banach space $X$ is said to be a weak Asplund
space [Asplund space] if there exists a dense $G_{\delta}$ subset $G$ of $D$ such that $f$ is G$\mathrm{\hat{a}}$teaux [Frechet] differentiable at
each point of $G$.
\end{definition}

We know that there exists a weak Asplund space that is not  an Asplund space.  In 1990, D.Preiss, R.Phelps and I.Namioka
proved that if $X$ is a smooth Banach space, then $X$ is a weak Asplund space (see [12]).
In 1979,  D.G. Larman and R.R. Phelps  generalized weak Asplund space to  G$\mathrm{\hat{a}}$teaux differentiability space.

\begin{definition}(see [1])
$X$ is said to be a G$\mathrm{\hat{a}}$teaux differentiability space if every convex continuous
function is G$\mathrm{\hat{a}}$teaux differentiable on  a dense subset of $X$.
\end{definition}

\begin{definition}(see [5])
A point $x_{0}^{*}\in C^{*}$ is said to be weak$^{*}$ exposed point of $C^{*}$ if there exists $x\in S(X)$ such that $x_0^*(x) > {x^*}(x)$ whenever ${x^*}\in C^{*}\backslash\{x_0^*\}$.
\end{definition}

\begin{theorem}(see [5])
Suppose that $C^{*}$ is a bounded weak$^{*}$ closed convex subset of $X^{*}$, then
$x_{0}^{*}$ is weak$^{*}$ exposed point of $C^{*}$ and is exposed by $x_0$ if and only if $\sigma_{C^{*}}$ is
G${\hat{a}}$teaux differentiable at the point $x_0$ and $d\sigma_{C^{*}}(x_0)=x_{0}^{*}$.
\end{theorem}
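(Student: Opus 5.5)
The plan is to show both directions by computing the directional derivatives of the support function $\sigma_{C^{*}}$ at $x_0$ in terms of the set of maximizers of $x_0$ on $C^{*}$. Throughout, $\sigma_{C^{*}}$ is a finite, convex, positively homogeneous and Lipschitz function; it is Lipschitz because $C^{*}$ is bounded. Since $C^{*}$ is weak$^{*}$ compact (Alaoglu) and each $x\in X$ acts weak$^{*}$ continuously, the supremum defining $\sigma_{C^{*}}(x)$ is attained. Let
\[
F(x_0)=\{x^{*}\in C^{*}: x^{*}(x_0)=\sigma_{C^{*}}(x_0)\}=\bigcap_{\alpha>0}S\left(x_0,C^{*},\alpha\right).
\]
This face is nonempty, weak$^{*}$ closed and convex. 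The key identity I will establish is
\[
\sigma_{C^{*}}'(x_0;y)=\lim_{t\to 0^{+}}\frac{\sigma_{C^{*}}(x_0+ty)-\sigma_{C^{*}}(x_0)}{t}=\max\{x^{*}(y):x^{*}\in F(x_0)\}\quad (y\in X).
\]

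\emph{Proof of the identity.} The inequality ``$\geq$'' is immediate. For any $x^{*}\in F(x_0)$ we have $\sigma_{C^{*}}(x_0+ty)\ge x^{*}(x_0)+tx^{*}(y)$, and dividing by $t$ gives the bound.

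For ``$\leq$'', take $t_n\downarrow 0$ and pick $x_n^{*}\in C^{*}$ attaining $\sigma_{C^{*}}(x_0+t_ny)$. Then
\[
\frac{\sigma_{C^{*}}(x_0+t_ny)-\sigma_{C^{*}}(x_0)}{t_n}\le x_n^{*}(y),
\]
because $x_n^{*}(x_0)\le\sigma_{C^{*}}(x_0)$. Moreover
\[
x_n^{*}(x_0)\ge \sigma_{C^{*}}(x_0)-2t_n\|y\|\sup_{C^{*}}\|x^{*}\|,
\]
so $x_n^{*}$ lies in the slice $S\left(x_0,C^{*},\alpha_n\right)$ with $\alpha_n\to0$.

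This is the step I expect to be the main obstacle. The space need not be separable, so the sequence $(x_n^{*})$ need not have a weak$^{*}$ convergent subsequence. I will therefore pass to a weak$^{*}$ cluster point $x^{*}$ of the sequence, which exists by compactness (equivalently, the limit along a subnet). Evaluation at $x_0$ and at $y$ is weak$^{*}$ continuous, so $x^{*}\in F(x_0)$, and $x^{*}(y)\ge\limsup_n x_n^{*}(y)$ after first choosing a subsequence realizing the $\limsup$. This proves ``$\leq$''. The one-sided limit exists by convexity, so working with the $\limsup$ along the chosen sequence is enough.

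\emph{The two directions.} If $x_0^{*}$ is weak$^{*}$ exposed by $x_0$, then by the Definition $F(x_0)=\{x_0^{*}\}$. The identity then gives $\sigma_{C^{*}}'(x_0;y)=x_0^{*}(y)$ for all $y$. Applying this to $-y$ gives $\sigma_{C^{*}}'(x_0;-y)=-x_0^{*}(y)$, so the left and right derivatives agree and are linear in $y$. Hence $\sigma_{C^{*}}$ is G\^{a}teaux differentiable at $x_0$ with $d\sigma_{C^{*}}(x_0)=x_0^{*}$.

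Conversely, suppose $\sigma_{C^{*}}$ is G\^{a}teaux differentiable at $x_0$ with derivative $x_0^{*}$. Then $y\mapsto\max_{F(x_0)}x^{*}(y)$ equals the linear functional $x_0^{*}(y)$, and this holds with $-y$ as well. So $\min_{F(x_0)}x^{*}(y)=\max_{F(x_0)}x^{*}(y)=x_0^{*}(y)$ for every $y$. Hence every element of $F(x_0)$ coincides with $x_0^{*}$ on all of $X$, and $F(x_0)=\{x_0^{*}\}$. In particular $x_0^{*}\in C^{*}$, and $x_0^{*}(x_0)>x^{*}(x_0)$ for every $x^{*}\in C^{*}\setminus\{x_0^{*}\}$, which is exactly the definition of being exposed by $x_0$.

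The normalization $x_0\in S(X)$ in the Definition is harmless. Both $F(x_0)$ and G\^{a}teaux differentiability of the positively homogeneous function $\sigma_{C^{*}}$ are invariant under replacing $x_0$ by $x_0/\|x_0\|$, and $x_0\neq0$ unless $C^{*}$ is a singleton, a trivial case.
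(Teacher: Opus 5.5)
The paper gives no proof of this statement; it is quoted from [5]. Your argument therefore has to stand on its own, and it does: it is a correct and complete proof.

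It differs from the standard route in [5], which goes through the subdifferential. There one first shows $\partial\sigma_{C^{*}}(x_0)=F(x_0)$. Testing the subgradient inequality at $0$ and at $2x_0$ gives $x^{*}(x_0)=\sigma_{C^{*}}(x_0)$ and $x^{*}\le\sigma_{C^{*}}$, and then $x^{*}\in C^{*}$ by separation in $(X^{*},w^{*})$. One then applies the general fact that a continuous convex function is G\^{a}teaux differentiable at a point if and only if its subdifferential there is a singleton. The nontrivial half of that fact rests on the max formula $f'(x;y)=\max\{x^{*}(y):x^{*}\in\partial f(x)\}$, which is proved by Hahn--Banach.

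You instead prove the max formula directly for support functions by a Danskin-type argument. You use that the supremum defining $\sigma_{C^{*}}$ is attained and that $C^{*}$ is weak$^{*}$ compact. This makes the proof self-contained and avoids both Hahn--Banach and the separation step. In the converse direction, $x_0^{*}\in C^{*}$ comes for free from $F(x_0)=\{x_0^{*}\}\subset C^{*}$. The textbook route, in exchange, fits into the general subdifferential calculus and yields the identification $\partial\sigma_{C^{*}}(x_0)=F(x_0)$ as a reusable by-product.

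Your closing remark on normalization is needed, not cosmetic. Definition 1.3 requires the exposing vector to lie in $S(X)$. So the equivalence must be read either with $x_0\in S(X)$, or with ``exposed by $x_0$'' meaning the strict inequality for $x_0$ itself (equivalently for $x_0/\|x_0\|$). Your argument covers both readings.
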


While some progress has been achieved in the research on G$\mathrm{\hat{a}}$teaux differentiability space and weak  Asplund  space theory, the field remains largely unexplored.
In 1979, D.G. Larman and R.R. Phelps raised the following open problems in [1]:

\begin{problem}
Let $X$ be a G$\mathrm{\mathrm{\hat{a}}}$teaux differentiability space. Must $X$ be a weak Asplund  space?

\end{problem}

\begin{problem}
Let $X$ be a weak Asplund spaces. Must the space $X\times R$ be a weak Asplund space?

\end{problem}

\begin{problem}
Let $X$ be a weak Asplund space and $M$ be a closed subspace of  $X$. Must $M$ be a weak Asplund  space?

\end{problem}

\begin{problem}
Let $X$ be a G$\mathrm{\mathrm{\hat{a}}}$teaux differentiability space and $M$ be a closed subspace of  $X$. Must $M$ be a G$\mathrm{\mathrm{\hat{a}}}$teaux differentiability space?

\end{problem}

These problems are the basis of theory of G$\mathrm{\hat{a}}$teaux differentiability space and weak Asplund space.
Moreover, these three problems are closely related to the application of G$\mathrm{\hat{a}}$teaux differentiability space and weak Asplund space.

\par In the mid-1980s, M. Fabian  proved that if
$X$ is a G$\mathrm{\hat{a}}$teaux differentiability space, then
$X\times R$ is also a G$\mathrm{\hat{a}}$teaux differentiability space.
In 2001, Lixin Cheng and M. Fabian  proved that the product of a G$\mathrm{\hat{a}}$teaux differentiability space and a separable space is again a G$\mathrm{\hat{a}}$teaux differentiability space (see[3]).
In 2006, Warren B. Moors and Sivajah Somasundaram showed that there exists a G$\mathrm{\hat{a}}$teaux differentiability space which is not a weak Asplund space (see[2]).
In 2025, Shaoqiang Shang proved that if
$X$ is a weak Asplund space, then $X\times R$
is also a weak Asplund space.
Regarding problem 1.7, mathematicians tend to believe that a closed subspace of a G$\mathrm{\hat{a}}$teaux differentiability space is not necessarily a G$\mathrm{\hat{a}}$teaux differentiability space, and this property is not hereditary. Mathematicians have been trying to find a counterexample.

\par In this paper, we provide an affirmative answer to this question. By developing a new technique involving intricate manipulations of weak$^{*}$ slices and a carefully designed iterative selection process, we prove that if  $X$ is a G$\mathrm{\hat{a}}$teaux differentiability space and
$ M $ is a closed subspace of $X$, then $ M $ is a G$\mathrm{\hat{a}}$teaux differentiability space.
These results completes a key part of the theory surrounding  G$\mathrm{\hat{a}}$teaux differentiability space and resolves a problem that has remained open for over forty-six years.

\begin{lemma} (see[1],[5])
Let $\varepsilon\in (0,+\infty)$, $x,y\in S(X)$ and $\left|x^{*}(y)\right|\leq 1$ whenever $x^{*}\in \{y^{*}\in X^{*}:y^{*}(x)=0\}\cap (2/\varepsilon) B(X^{*})$. Then $\left\| x - y \right\|\leq \varepsilon$ or $\left\| x + y \right\|\leq \varepsilon$.

\end{lemma}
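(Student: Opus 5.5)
The plan is to measure how far $y$ is from the line $\mathbb{R}x$. Set $d=\operatorname{dist}(y,\mathbb{R}x)=\inf_{t\in\mathbb{R}}\|y-tx\|$. The hypothesis should force $d\le\varepsilon/2$. From there, a closest point $tx$ on the line will give the conclusion by a triangle-inequality argument, with the sign of $t$ deciding between $\|x-y\|$ and $\|x+y\|$.

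\emph{Step 1: $d\le \varepsilon/2$.} Suppose instead that $d>\varepsilon/2$. Then $d>0$, so $y\notin\mathbb{R}x$. By the Hahn--Banach theorem, applied to the closed one-dimensional subspace $\mathbb{R}x$ and the point $y$ outside it, there is $z^*\in X^*$ with $\|z^*\|=1$, $z^*(x)=0$ and $z^*(y)=d$. Put $x^*=(2/\varepsilon)z^*$. Then $x^*(x)=0$ and $x^*\in(2/\varepsilon)B(X^*)$, so $x^*$ lies in the set where the hypothesis applies. However $x^*(y)=2d/\varepsilon>1$, which contradicts $|x^*(y)|\le 1$. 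Hence $d\le\varepsilon/2$.

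\emph{Step 2: choose a closest point.} The function $t\mapsto\|y-tx\|$ is continuous and tends to $+\infty$ as $|t|\to\infty$, because $\|x\|=1$. So the infimum is attained at some $t_0\in\mathbb{R}$, and $\|y-t_0x\|=d\le\varepsilon/2$. Since $\|x\|=\|y\|=1$, the reverse triangle inequality gives
\[
\bigl|1-|t_0|\bigr|=\bigl|\|y\|-\|t_0x\|\bigr|\le\|y-t_0x\|\le\varepsilon/2 .
\]

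\emph{Step 3: split by the sign of $t_0$.} If $t_0\ge 0$, then
\[
\|x-y\|\le\|y-t_0x\|+|t_0-1|\,\|x\|=\|y-t_0x\|+\bigl|1-|t_0|\bigr|\le\varepsilon .
\]
If $t_0<0$, then
\[
\|x+y\|\le\|y-t_0x\|+|1+t_0|\,\|x\|=\|y-t_0x\|+\bigl|1-|t_0|\bigr|\le\varepsilon .
\]
This gives the lemma.

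There is no serious obstacle. The only care needed is in Step 1: the normalization of the Hahn--Banach functional, with $\|z^*\|=1$ and value exactly $d$ at $y$, is what makes the factor $2/\varepsilon$ match the threshold $\varepsilon/2$ and produces the contradiction.
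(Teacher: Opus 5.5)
Your proof is correct. Step 1 uses the Hahn--Banach distance formula $\mathrm{dist}(y,\mathbb{R}x)=\sup\{x^*(y): x^*(x)=0,\ \|x^*\|\le 1\}$ to convert the hypothesis into $\mathrm{dist}(y,\mathbb{R}x)\le\varepsilon/2$, and Steps 2--3 are the usual split on the sign of the nearest multiple $t_0x$. All the estimates check out.

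The paper gives no proof of this lemma and only cites [1],[5]. Your argument is the standard one behind that citation: it is the predual form of the Bishop--Phelps lemma, where one instead extends $g|_{\ker f}$ by Hahn--Banach to $h$ with $\|h\|\le\varepsilon/2$, so that $g-h\in\mathbb{R}f$. Nothing further is needed.
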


\section{ Main Theorems }

\begin{theorem}
Suppose that $M$ is a closed subspace of a G${\hat{a}}$teaux differentiability space $X$. Then $M$ is a G${\hat{a}}$teaux differentiability space.

\end{theorem}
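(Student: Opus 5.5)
The plan is to reduce the statement to a lifting problem for weak$^{*}$ compact convex sets and then to handle the one genuinely new point, which is locating differentiability points inside $M$. First, since every continuous convex function on an open convex set is locally Lipschitz, the standard reduction of Larman and Phelps [1] shows that it suffices to prove the following. For every bounded weak$^{*}$ closed convex set $C^{*}\subset M^{*}$, the support function $\sigma_{C^{*}}$ is G\^{a}teaux differentiable on a dense subset of $M$. By Theorem 1.4, this amounts to showing that for every nonempty open set $U\subset M$ there is $x_0\in U$ exposing a weak$^{*}$ exposed point of $C^{*}$.

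Next we lift $C^{*}$ to $X^{*}$. Let $q:X^{*}\to M^{*}$ be the restriction map, and let $r=\sup\{\|z^{*}\|:z^{*}\in C^{*}\}$. Put $K^{*}=q^{-1}(C^{*})\cap rB(X^{*})$. This set is weak$^{*}$ compact and convex. By the norm-preserving Hahn--Banach theorem, $q(K^{*})=C^{*}$, and hence $\sigma_{K^{*}}|_{M}=\sigma_{C^{*}}$. Moreover, for $x\in M$ and $\alpha>0$ we have $q\bigl(S(x,\alpha,K^{*})\bigr)=S(x,\alpha,C^{*})$. Weak$^{*}$ slices of $C^{*}$ determined by vectors of $M$ are therefore exactly the images of slices of $K^{*}$. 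Since $X$ is a G\^{a}teaux differentiability space, $\sigma_{K^{*}}$ is G\^{a}teaux differentiable on a dense set $G\subset X$. However, $G$ may be disjoint from $M$, so it cannot be used directly.

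The core step is an iterative slice construction carried out inside $M$. Fix a ball $B(u_0,\delta)\cap M\subset U$. We build points $x_n\in M$ with $\|x_{n+1}-x_n\|<\delta 2^{-n-1}$, tolerances $\alpha_n\downarrow 0$, and nested slices $S(x_{n+1},C^{*},\alpha_{n+1})\subset S(x_n,C^{*},\alpha_n)$. At step $n$ we use a point $g_n\in G$ close to $x_n$. At $g_n$ the slices $S(g_n,\beta,K^{*})$ shrink weak$^{*}$ to a single point $x_{g_n}^{*}$. We then try to transfer this to a vector of $M$, choosing $x_{n+1}\in M$ with $q(x_{g_n}^{*})$ almost maximizing $x_{n+1}$ on $C^{*}$. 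Lemma 1.8 is meant to control the angle between $x_{n+1}$ and the direction used: it converts a bound on the functionals vanishing at $x_{n+1}$ into a norm estimate $\|x_{n+1}/\|x_{n+1}\|\mp y\|\le\varepsilon$. This lets successive choices be kept Cauchy while the oscillation of $z^{*}(y)$ over the slice is forced below $1/n$ for $y$ in a growing finite subset of $M$. The limit $x_0=\lim x_n\in U$ should then satisfy $\bigcap_n S(x_0,C^{*},\alpha_n)=\{x_0^{*}\}$, which by Theorem 1.4 gives differentiability of $\sigma_{C^{*}}$ at $x_0$.

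I expect the main obstacle to be precisely this transfer step, passing from a point of differentiability $g_n\notin M$ of $\sigma_{K^{*}}$ to the required control at points of $M$. Weak$^{*}$ smallness of slices of $K^{*}$ at $g_n$ says nothing a priori about slices of $C^{*}$ determined by vectors of $M$. The reason is that $g_n-x_n$ lies off $M$ and can tilt the maximizing face arbitrarily. Moreover, $M$ is nonseparable in general, so only countably many directions $y$ can be handled along any single sequence. My first attempt is to exploit the freedom in choosing $K^{*}$ by replacing $rB(X^{*})$ with $rB(X^{*})+\lambda B(M^{\perp})$-type constraints. The aim is to make $\sigma_{K^{*}}$ nearly invariant under translations in a complement direction, so that differentiability at $g_n$ propagates to its "projection" in $M$. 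I flag that this is exactly where the known difficulty of the problem lies, and that the step may fail. Uniform (Fr\'echet-type) control is unavailable, and Gâteaux differentiability at nearby points outside $M$ need not pass to any point of $M$.
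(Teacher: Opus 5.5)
Your reduction to support functions is correct, and so are the lifting facts: $q(K^{*})=C^{*}$, and $q(S(x,\alpha,K^{*}))=S(x,\alpha,C^{*})$ for $x\in M$. But the proposal contains no argument for the one step that carries the content of the theorem, and the mechanism you sketch cannot supply one.

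G\^{a}teaux differentiability of $\sigma_{K^{*}}$ at $g_n$ gives, for a weak$^{*}$ neighbourhood $W$ of $0$, a width $\beta=\beta(g_n,W)$ with $S(g_n,\beta,K^{*})-S(g_n,\beta,K^{*})\subset W$. The natural way to pass this to a slice of $C^{*}$ by a vector $u\in M$ is the inclusion $S(u,\alpha,K^{*})\subset S(g_n,\beta,K^{*})$, which is guaranteed when $\alpha+2r\|u-g_n\|\le\beta$. However, $\beta(g_n,W)$ can be far smaller than $\mathrm{dist}(g_n,M)$, with no uniformity as $g_n$ varies. So no vector of $M$, in particular not your $x_{n+1}$, need have its slice inside the small slice at $g_n$. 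Choosing $x_{n+1}$ so that $q(x^{*}_{g_n})$ almost maximizes it does not repair this. Nor does the angle lemma (Lemma~1.9 here): it only compares a vector with the kernel of a functional, and says nothing about slices at $g_n$ versus slices at points of $M$.

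For your maximal lift $K^{*}=q^{-1}(C^{*})\cap rB(X^{*})$, the point exposed by $g_n$ can even project into the interior of $C^{*}$. Take $M=\mathbb{R}\times\{0\}\subset\ell_2^2$ and $C^{*}=[-1,1]$, so that $K^{*}$ is the unit disc. Then $g=(\cos\theta,\sin\theta)$ with $0<\theta<\pi/2$ exposes $g$ itself, whose image $\cos\theta$ is interior to $C^{*}$, and no slice of $C^{*}$ by a nonzero vector of $M$ lies in $q(S(g,\beta,K^{*}))$ unless $1-\cos\theta<\beta$. Your proposed repair is adding $M^{\perp}$-type constraints so that differentiability propagates to a projection into $M$. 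This is not an argument: $M$ need not be complemented, and G\^{a}teaux differentiability at a point gives no information at nearby points.

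Your overall architecture matches the paper's: nested slices by Cauchy vectors of $M$, with the limit face a singleton. The idea you are missing is the paper's different choice of lift, combined with separation rather than approximation of differentiability points by vectors of $M$.
\begin{itemize}
\item By Zorn's lemma the paper takes a \emph{minimal} weak$^{*}$ closed convex $K_1^{*}\subset B(X^{*})$ with $0\in K_1^{*}$ and $I^{*}(K_1^{*})=C^{*}$. It fixes a single point $x_1\in X$ near $x$ at which $\sigma_{K_1^{*}}$ is G\^{a}teaux differentiable.
\item Minimality forces $I^{*}(K_1^{*}\setminus S(x_1,\alpha,K_1^{*}))\neq C^{*}$, since otherwise this smaller set would again be a lift.
\item Separating a point of the difference from this weak$^{*}$ compact convex set by a vector $y\in M$ gives, in its simplest form, $S(y,C^{*},\beta)\subset I^{*}(S(x_1,\alpha,K_1^{*}))$.
\end{itemize}
Thus slices of $C^{*}$ by vectors of $M$ inherit the weak$^{*}$ smallness of the slices at $x_1$ (Lemma~2.2), however far $x_1$ is from $M$. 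The limit face satisfies $S(y,C^{*},0)-S(y,C^{*},0)\subset\bigcap_n I^{*}(V_n)=\{0\}$. So one exposed point of $K_1^{*}$ at $x_1$ replaces your countable family of test directions. The position of the new vectors $y_{n+1}$ is controlled not by the angle lemma at $g_n$, but by adjoining the large flat sets $N_n^{*}\subset\{x^{*}\in M^{*}:x^{*}(y_n)=0\}$ and applying Lemma~2.4.

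Be aware that this position control is where the paper's argument is weakest. It asserts, from the definition of $D_n^{*}=\overline{co}^{w^{*}}(K_n^{*}\cup L_n^{*})$ alone, that a point of $D_n^{*}$ not in $K_n^{*}$ lies in the cone $X_n^{*}$ and therefore maps outside $C^{*}$. This fails in general. The midpoint of two elements of $L_n^{*}$ lying over $b$ and $-b$ lies over $0\in C^{*}$. If the underlying lifts in $H_n^{*}$ are not antipodal, it is a nonzero element of $M^{\perp}$ whose norm can be of the order of the radius of $N_n^{*}$, and then it lies in neither $K_n^{*}$ nor $X_n^{*}$. So the obstacle you identified is not convincingly removed there either.
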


In order to prove the theorem, we give some lemmas.

\begin{lemma}
Let $A^{*}$ be a bounded weak$^{*}$ closed subset of $X^{*}$, and let the point $z_0^{*}$ be a weak$^{*}$ exposed point of $C^{*}$
and  exposed by $x_0$, where $C^{*}={\overline {co} ^{{w^*}}}\left( {{A^*}} \right)$.
Then $z_{0}^{*} \in A^{*}$ and  for every weak$^{*}$ neighborhood $V$ of origin, there exists a real number $\eta\in (0,1)$ such that $S\left(x_0, C^{*},\eta\right)-S\left(x_0, C^{*},\eta\right)\subset V$.
\end{lemma}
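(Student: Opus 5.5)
The plan is to prove the second assertion first, by a compactness argument in the weak$^{*}$ topology. Since $A^{*}$ is bounded, so is $C^{*}={\overline {co} ^{{w^*}}}(A^{*})$. By Banach--Alaoglu, $C^{*}$ is weak$^{*}$ compact. The key claim is that the weak$^{*}$ closed slices $S(x_0,C^{*},\eta)$, $\eta\in(0,1)$, form a decreasing family of weak$^{*}$ compact sets whose intersection is
\[
\bigcap_{\eta>0} S(x_0,C^{*},\eta)=\{x^{*}\in C^{*}: x^{*}(x_0)=\sigma_{C^{*}}(x_0)\}.
\]
Since $z_0^{*}$ is weak$^{*}$ exposed by $x_0$, this intersection is exactly $\{z_0^{*}\}$.

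Now fix a weak$^{*}$ neighborhood $V$ of the origin. Using continuity of subtraction in the weak$^{*}$ topology, choose a weak$^{*}$ open neighborhood $W$ of the origin with $W-W\subset V$. The family $\{S(x_0,C^{*},\eta)\setminus (z_0^{*}+W)\}_{\eta}$ consists of weak$^{*}$ closed subsets of the compact set $C^{*}$. It is decreasing, and its intersection is empty because $z_0^{*}\in z_0^{*}+W$. By the finite intersection property, some $S(x_0,C^{*},\eta)$ is contained in $z_0^{*}+W$. Then
\[
S(x_0,C^{*},\eta)-S(x_0,C^{*},\eta)\subset (z_0^{*}+W)-(z_0^{*}+W)=W-W\subset V.
\]
This step is routine once compactness is in place.

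For $z_0^{*}\in A^{*}$, I would use the slice localization just obtained. Suppose $z_0^{*}\notin A^{*}$. Since $A^{*}$ is weak$^{*}$ closed, there is a convex weak$^{*}$ open neighborhood $W$ of the origin, of the form $\{x^{*}:|x^{*}(y_i)|<\delta,\ i=1,\dots,n\}$, with $(z_0^{*}+W-W)\cap A^{*}=\emptyset$. By the argument above there is $\eta$ with $S(x_0,C^{*},\eta)\subset z_0^{*}+W$. Next take the open slice $S(x_0,\eta,C^{*})$. Since $\sigma_{C^{*}}=\sigma_{A^{*}}$ (a weak$^{*}$ continuous linear functional has the same supremum over $A^{*}$ and over its weak$^{*}$ closed convex hull), this slice has nonempty intersection with $A^{*}$. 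Indeed, otherwise $A^{*}\subset\{x^{*}:x^{*}(x_0)\le\sigma_{C^{*}}(x_0)-\eta\}$, a weak$^{*}$ closed convex set, which would force $C^{*}$ into it and contradict $z_0^{*}(x_0)=\sigma_{C^{*}}(x_0)$. Any point of $A^{*}\cap S(x_0,\eta,C^{*})$ lies in $S(x_0,C^{*},\eta)\subset z_0^{*}+W$, which contradicts the choice of $W$. Hence $z_0^{*}\in A^{*}$.

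The main obstacle, though a mild one, is this last step: checking that open slices of $C^{*}$ meet $A^{*}$. That reduces to $\sigma_{A^{*}}=\sigma_{C^{*}}$, which follows because each half-space $\{x^{*}: x^{*}(x_0)\le c\}$ is weak$^{*}$ closed and convex. Everything else follows from weak$^{*}$ compactness of $C^{*}$ and the exposing property.
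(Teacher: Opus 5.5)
Your proof is correct and takes essentially the same route as the paper. Weak$^{*}$ compactness of $C^{*}$ together with the exposing property forces small slices $S(x_0,C^{*},\eta)$ into $z_0^{*}+W$: you get this from the finite intersection property for the decreasing compact slices, and the paper gets it from a weak$^{*}$ cluster point of a maximizing sequence in $C^{*}\setminus(z_0^{*}+U)$. In both arguments $z_0^{*}\in A^{*}$ then follows because a weak$^{*}$ closed half-space containing $A^{*}$ must contain $C^{*}$.
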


\begin{proof}
Let $A^{*}$ be a bounded weak$^{*}$ closed subset of $X^{*}$, and let the point $z_0^{*}$ be a weak$^{*}$ exposed point of $C^{*}$
and exposed by $x_0$, where $C^{*}={\overline {co} ^{{w^*}}}\left( {{A^*}} \right)$.  Let the set $V$ be a weak$^{*}$ neighborhood of origin. Then, by continuity of addition operation in topological linear space, there exists a weak$^{*}$ neighborhood $U$ of origin so that $U-U\subset V$.
We next prove that
there exists a real number $\eta\in (0,1)$ such that
\[
z_0^*\left({x_0}\right) - 2\eta \geq \sup \left\{ x^*(x_0):{x^*} \in C^*\backslash \left(z_0^* + U\right)\right\} .~~~~~~~~~~~~~~~\eqno~~~~~~~~~~~~~~~~~~~~~~~~~~(2.1)
\]
Otherwise, we get that there exists a sequence $\{ x_n^*\} _{n = 1}^\infty \subset C^*\backslash (z_0^* + U)$ such that
\[
\mathop {\lim }\limits_{n \to \infty } x_n^* \left(x_0\right )
= \sup \left\{ x^{*}( x_{0}): x^{*} \in C^*\backslash \left(z_0^* + {U}\right)\right\}=z_0^*\left(x_0\right).
\]
We may assume without loss of generality  that $x_n^* \ne x_m^* $ for every $m \ne n$. Since the set $C^{*}$ is a weak$^{*}$ bounded closed convex subset of $X^{*}$,
we obtain that $C^{*}$ is a weak$^{*}$ compact subset of $X^{*}$. Then
there exists a point $x_0^* \in C^{*}$ such that the point $x_0^* $ is a weak$^{*}$
accumulation point of $\{ x_n^* \} _{n = 1}^\infty  $. Let
\[
\Omega=\left\{ U_{\alpha,x_0^* } \cap \{ x_n^* \} _{n = 1}^\infty
: U_{\alpha ,x_0^*} ~\mathrm{ is}~\mathrm{ weak}^{*}~\mathrm{ neighbourhood}~\mathrm{of} ~x_0^*, ~\quad~\alpha\in\Delta  \right\}
\]
Then we define a order by the containing relations, i.e, $U_{\alpha ,x_0^*} \supset U_{\beta,x_0^* }  $ if and only if
$U_{\beta,x_0^* }  \cap \{ x_n^* \} _{n = 1}^\infty> U_{\alpha,x_0^*  } \cap \{ x_n^* \} _{n = 1}^\infty$, where $\alpha\in\Delta$ and $\beta\in\Delta$.
Therefore, by the Zermelo Lemma, there exists a mapping $f$ on $\Omega$ such that
\[
x_\alpha^* = f\left(U_{\alpha,x_0^*  } \cap \{ x_n^* \} _{n = 1}^\infty \right)
\in U_{\alpha,x_0^* } \cap \{ x_n^* \} _{n = 1}^\infty .~~~~~~~~~~~~~~~~~~~~~~~~~~~~~~~~~~\eqno~~~~~~~~~~~~~~~~~~~~~~~~~~~~~~~~~~~~~~~(2.2)
\]
Hence we define
a net $ \{ x_\alpha ^* \} _{\alpha  \in \Delta } \subset \{ x_n^* \}
_{n = 1}^\infty $. It can be easily deduced that
\[
 x_\alpha
^*\xrightarrow{w^{*}} x_0^* ~\quad~~\mathrm{and}~~\quad~ x_{0}^{*}\left( x_{0}\right) = \sup\left\{x^{*}( x_{0}): x^{*} \in C^{*}\right\}=  z_{0}^{*}\left( x_{0}\right) .
\]
Since the point $z_0^{*}$ is a weak$^{*}$ exposed point of $C^{*}$ and exposed by $x_0$, we get that $x_{0}^{*}=z_{0}^{*}$.
Hence we get that $ x_\alpha
^*\xrightarrow{w^{*}} z_0^*$, which
contradicts $\{ x_n^*\} _{n = 1}^\infty \subset C^*\backslash \left(z_0^* + U\right)$.
Pick a point $x_1^{*}\in S\left( {x_0}, {C^*}, \eta \right)$. Then, by the formula (2.1), we obtain that
\[
x_1^{*}(x_0)\geq \sigma_{C^{*}}(x_0)-\eta=z_{0}^{*}(x_0)-\eta > \sup \left\{ x^*(x_0):{x^*} \in C^*\backslash \left(z_0^* + U\right)\right\}.
\]
Accordingly, we establish the inclusion  $S\left( {x_0}, {C^*}, \eta \right)\subset z_0^* + U$. It follows that
\[
S\left(x_0, C^{*},\eta\right)-S\left(x_0, C^{*},\eta\right)  \subset \left(z_0^* + U\right)  -    \left(z_0^* + U\right)   =U-U     \subset V.
\]
\par We next will prove that $z_0^*\in A^{*}$. In fact, suppose that $z_0^*\notin A^{*}$. Since the set $A^{*}$ is a weak$^{*}$ bounded closed subset of $X^{*}$,
there exists a weak$^{*}$ neighborhood $V_0$ of origin such that $(z_0^*+V_0)\cap A^{*} =\emptyset$. Hence we obtain that
\[
{A^*} \subset {A^*}\backslash \left(z_0^* + V_0\right) \subset \left( {{{\overline {co} }^{{w^*}}}({A^*})} \right)\backslash \left(z_0^* + V_0\right) = {C^*}\backslash \left(z_0^* + V_0\right).
\]
Thus, taking convex hulls of both sides produces the following expression
\[
{C^*} = {\overline {co} ^{{w^*}}}({A^*}) \subset {\overline {co} ^{{w^*}}}\left( {C^*}\backslash \left(z_0^* + V_0\right) \right).~~~~~~~~\eqno~~~~~(2.3)
\]
Moreover, we have proved that there exists a real number $\eta_{0}\in (0,1)$ such that
\begin{eqnarray*}
z_0^*\left(x_0\right) - 2\eta_{0} &\geq& \sup \left\{ {x^*}({x_0}):{x^*} \in {C^*}\backslash \left(z_0^* + V_0\right)\right\}
\\
 &= &\sup \left\{ x^*(x_0):{x^*} \in co\left( {C^*}\backslash \left(z_0^* + V_0\right) \right) \right\}
\\
 &= &\sup \left\{ x^*(x_0):{x^*} \in {\overline {co} ^{{w^*}}}\left( {C^*}\backslash \left(z_0^* + V_0\right) \right) \right\}.
\end{eqnarray*}
Then we have $z_0^* \notin {\overline {co} ^{{w^*}}}\left( {C^*}\backslash \left(z_0^* + V_0\right) \right)$. Therefore, by ${C^*}\subset  {\overline {co} ^{{w^*}}}\left( {C^*}\backslash \left(z_0^* + V_0\right) \right)$, we have
$z_0^* \notin {C^*}$, a contradiction. Thus $z_0^*\in A^{*}$, which completes the proof.
\end{proof}

\begin{lemma}
Suppose that
\par (1) $A^{*}$ is a bounded weak$^{*}$ closed convex subset of $X^{*}$;
\par (2) $N^{*}$ is a bounded  subset of $X^{*}$;
\par (3) there exists a point $x\in X$ such that $\sigma_{N^{*}}(x)<\sigma_{A^{*}}(x)$.
\\
Then, for any $\varepsilon>0$, there exists a real number $\eta\in(0,\varepsilon)$ such that
\[
S\left(x,C^{*},\eta\right)\subset S\left(x,A^{*},\varepsilon\right)+B\left(0,\varepsilon\right),~~\quad~~~~~\mathrm{where}~~~~~\quad~~~ C^{*}=\overline{co}^{w^{*}} (A^{*}\cup N^{*} ).
\]

\end{lemma}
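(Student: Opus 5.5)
The plan is to describe $C^{*}$ explicitly as a set of two-term convex combinations and then estimate directly. Put $K^{*}=\overline{co}^{w^{*}}(N^{*})$. Since $N^{*}$ is bounded, $K^{*}$ is a bounded weak$^{*}$ closed convex set, hence weak$^{*}$ compact. Likewise $A^{*}$ is weak$^{*}$ compact by (1). If $N^{*}=\emptyset$ then $C^{*}=A^{*}$ and we simply take $\eta=\varepsilon/2$, so assume $N^{*}\neq\emptyset$. I will show
\[
C^{*}=\left\{\lambda a^{*}+(1-\lambda)k^{*}:\ a^{*}\in A^{*},\ k^{*}\in K^{*},\ \lambda\in[0,1]\right\}.
\]
The right-hand side is convex, since both $A^{*}$ and $K^{*}$ are convex. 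It is weak$^{*}$ compact, being the image of the compact space $A^{*}\times K^{*}\times[0,1]$ under the map $(a^{*},k^{*},\lambda)\mapsto\lambda a^{*}+(1-\lambda)k^{*}$. That map is weak$^{*}$ continuous because scalar multiplication and addition are jointly continuous in the weak$^{*}$ topology on $X^{*}$. Hence the right-hand side is a weak$^{*}$ closed convex set containing $A^{*}\cup N^{*}$, so it contains $C^{*}$. The reverse inclusion is obvious.

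Next I compare support functions. A supremum of a linear functional is unchanged by passing to convex hulls and weak$^{*}$ closures, so $\sigma_{K^{*}}(x)=\sigma_{N^{*}}(x)$. Set $\delta=\sigma_{A^{*}}(x)-\sigma_{N^{*}}(x)>0$, which is positive by (3). From the representation of $C^{*}$ we get $\sigma_{C^{*}}(x)=\sigma_{A^{*}}(x)$. Let $R>0$ be such that $\|z^{*}\|\le R$ for all $z^{*}\in A^{*}\cup K^{*}$.

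Now take $c^{*}=\lambda a^{*}+(1-\lambda)k^{*}\in S(x,C^{*},\eta)$. Using $k^{*}(x)\le\sigma_{A^{*}}(x)-\delta$ and $a^{*}(x)\le\sigma_{A^{*}}(x)$, the defining inequality $c^{*}(x)\ge\sigma_{A^{*}}(x)-\eta$ gives two estimates:
\[
(1-\lambda)\delta\le\eta
\qquad\text{and}\qquad
\lambda\left(a^{*}(x)-\sigma_{A^{*}}(x)\right)\ge -\eta+(1-\lambda)\delta\ge-\eta .
\]
Choose $\eta<\min\{\varepsilon,\ \delta/2,\ \varepsilon/2,\ \varepsilon\delta/(2R)\}$. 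Then $1-\lambda\le\eta/\delta\le 1/2$, so $\lambda\ge 1/2$. This yields $a^{*}(x)\ge\sigma_{A^{*}}(x)-2\eta\ge\sigma_{A^{*}}(x)-\varepsilon$, that is, $a^{*}\in S(x,A^{*},\varepsilon)$. For the distance,
\[
\|c^{*}-a^{*}\|=(1-\lambda)\|k^{*}-a^{*}\|\le 2R\eta/\delta\le\varepsilon .
\]
Therefore $c^{*}\in S(x,A^{*},\varepsilon)+B(0,\varepsilon)$, where $B(0,\varepsilon)$ is the norm ball in $X^{*}$.

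The only step needing real care is the first: showing that $C^{*}$ requires no further closure beyond these two-term combinations. I plan to handle it through weak$^{*}$ compactness of $A^{*}$ and $K^{*}$ together with joint continuity of the combination map. After that, the argument is an elementary linear estimate.
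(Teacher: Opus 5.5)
Your proof is correct. Its core estimate matches the paper's. You write a slice point as $\lambda a^{*}+(1-\lambda)k^{*}$. The gap $\delta=\sigma_{A^{*}}(x)-\sigma_{N^{*}}(x)$ then forces $1-\lambda\le\eta/\delta$. That bound gives both $a^{*}\in S(x,A^{*},\varepsilon)$ and the norm estimate.

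The difference lies in how the weak$^{*}$ closure in the definition of $C^{*}$ is handled. The paper proceeds in three steps:
\begin{itemize}
\item it proves the inclusion only for points of the open slice $S(x,\eta,C^{*})$ that lie in the possibly non-closed set $co\bigl(A^{*}\cup\overline{co}^{w^{*}}(N^{*})\bigr)$;
\item it approximates an arbitrary point of the open slice by a net from that set, and passes to the limit using weak$^{*}$ compactness of $S(x,A^{*},\varepsilon)+B(0,\varepsilon)$;
\item it uses that the closed slice is the weak$^{*}$ closure of the open slice.
\end{itemize}

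You instead show at the outset that the two-term hull is already weak$^{*}$ compact, as the continuous image of $A^{*}\times K^{*}\times[0,1]$, and hence equals $C^{*}$. So you work directly with the closed slice. No net argument and no comparison of open and closed slices is needed, and compactness is used exactly once.

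Your explicit bound $R$ replaces the paper's rescaling into $B(X^{*})$. Your strict choice of $\eta$ also guarantees $\eta<\varepsilon$ as the statement requires. The paper's choice $\eta=\min\{\varepsilon r/2,\varepsilon,r/2\}$ does not always do this; for example, $r=2$ and $\varepsilon=1/2$ give $\eta=\varepsilon$.
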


\begin{proof}
Since $A^{*}$ is a bounded weak$^{*}$ closed convex subset of $X^{*}$,
we get that $C^{*}=\overline{co }^{w^{*}}\left(A^{*}\cup \overline{co}^{w^{*}}(N^{*})\right )$.
Therefore, by $\sigma_{A^{*}}(x)>\sigma_{N^{*}}(x)$, we have $\sigma_{A^{*}}(x)=\sigma_{C^{*}}(x)$.
Since $A^{*}$ and  $N^{*}$ are two bounded subsets of $X^{*}$, we may assume without loss of generality that $A^{*}\subset B\left(X^*\right)$ and $\overline{co}^{w^{*}}(N^{*})\subset B\left(X^*\right)$.

\par Let $r=\sigma_{A^{*}}(x)-\sigma_{N^{*}}(x)>0$. Pick a real number $\eta=\min \left \{ \varepsilon r/2, \varepsilon,r/2\right\}$. Then, for any $x^{*}\in S\left(x,\eta,C^{*}\right)\cap co\left(A^{*}\cup \overline{co}^{w^{*}}(N^{*})\right )$, there exist $\lambda\in [0,1]$, $x_1^{*}\in A^{*}$
and $x_2^{*}\in \overline{co}^{w^{*}}(N^{*})$ such that $x^{*}=(1-\lambda ) x_1^{*}+ \lambda x_2^{*} $. 
Moreover, by $x_2^{*}\in \overline{co}^{w^{*}}(N^{*})$ and $r=\sigma_{A^{*}}(x)-\sigma_{N^{*}}(x)>0$, we get that $x_2^{*}(x)\leq \sigma_{A^{*}}(x)-r$.
Hence we obtain that
\[
\sigma_{A^{*}}(x)- \eta < x^{*}(x)=(1-\lambda ) x_1^{*}(x)+\lambda x_2^{*}(x)\leq(1-\lambda )\sigma_{A^{*}}(x)+\lambda \left(\sigma_{A^{*}}(x)-r\right).
\]
Therefore, by the above inequalities, we obtain the following inequalities
\[
\sigma_{A^{*}}(x)- \eta \leq \sigma_{A^{*}}(x)-\lambda \sigma_{A^{*}}(x)+\lambda \sigma_{A^{*}}(x)-\lambda r =\sigma_{A^{*}}(x)-\lambda r .~~~~~~~~\eqno~~~~~(2.4)
\]
Then $\lambda\leq \eta /r\leq \varepsilon/2$. Since $x_2^{*}(x)\leq \sigma_{A^{*}}(x)-r$ and 
$\eta\leq r/2$, we get that  $x_2^{*}(x)< \sigma_{A^{*}}(x)-\eta$. Therefore, by $ x^{*}(x)>\sigma_{A^{*}}(x)- \eta$, we obtain that
$x_1^{*}(x)> \sigma_{A^{*}}(x)- \eta$. Since $\eta=\min \left \{ \varepsilon r/2, \varepsilon,r/2\right\}$, we have
$x_1^{*}(x)> \sigma_{A^{*}}(x)- \varepsilon$. Thus  $ x_1^{*}\in S\left(x,A^{*},\varepsilon\right)$. 
Since $A^{*}\subset B\left(X^*\right)$ and $\overline{co}^{w^{*}}(N^{*})\subset B\left(X^*\right)$, by $\lambda\leq \eta /r\leq \varepsilon/2$, we obtain that
\begin{eqnarray*}
x^{*}=(1-\lambda ) x_1^{*}+ \lambda x_2^{*}
&=&x_1^{*}-\lambda x_1^{*}+ \lambda x_2^{*}
\\
&\in& S\left(x,A^{*},\varepsilon\right)+B\left(0,\frac{1}{2}\varepsilon\right)+B\left(0,\frac{1}{2}\varepsilon\right)
\\
&=&S\left(x,A^{*},\varepsilon\right)+B\left(0,\varepsilon\right).
\end{eqnarray*}
Hence we get that $S\left(x,\eta,C^{*}\right)\cap co\left(A^{*}\cup \overline{co}^{w^{*}}(N^{*})\right)\subset S\left(x,A^{*},\varepsilon\right)+B\left(0,\varepsilon\right)$.
Pick
a point $x_{0}^{*}\in S\left(x,\eta,C^{*}\right)$. Then, by
$S\left(x,\eta,C^{*}\right)= \left\{ {x^*} \in {C^{*}}:{x^*}(x) > \sigma_{C^{*}}(x)  - \eta \right\}$ and
$C^{*}=\overline{co }^{w^{*}}\left(A^{*}\cup \overline{co}^{w^{*}}(N^{*})\right )$,  there exists a net
\[
\{x_{\alpha}^{*}\}_{\alpha\in\Delta}\subset \left[ S\left(x,\eta,C^{*}\right)\cap co\left(A^{*}\cup \overline{co}^{w^{*}}(N^{*})\right ) \right]
\]
such that $x_{\alpha}^{*}\xrightarrow{w^{*}}x_{0}^{*}$. Therefore, by the previous proof, we have
$x_{\alpha}^{*} \in S\left(x,A^{*},\varepsilon\right)+B\left(0,\varepsilon\right)$.
Since $S\left(x,A^{*},\varepsilon\right)$ and $B\left(0,\varepsilon\right)$ are two weak$^{*}$ compact sets, we obtain that
$S\left(x,A^{*},\varepsilon\right)+B\left(0,\varepsilon\right)$ is a weak$^{*}$ compact subset of $X^{*}$. Therefore, by $x_{\alpha}^{*}\xrightarrow{w^{*}}x_{0}^{*}$, we obtain that
$x_{0}^{*} \in S\left(x,A^{*},\varepsilon\right)+B\left(0,\varepsilon\right)$. It follows that $ S\left(x,\eta,C^{*}\right)\subset S\left(x,A^{*},\varepsilon\right)+B\left(0,\varepsilon\right)$.
Since $S\left(x,A^{*},\varepsilon\right)+B\left(0,\varepsilon\right)$ is weak$^{*}$ compact, we obtain that
\[
S\left(x,C^{*},\eta\right) = \overline{S\left(x,\eta,C^{*}\right)}^{w^{*}}\subset \overline{S\left(x,A^{*},\varepsilon\right)+B\left(0,\varepsilon\right)}^{w^{*}}\subset S\left(x,A^{*},\varepsilon\right)+B\left(0,\varepsilon\right).
\]
Hence we obtain that Lemma 2.3 is true,
which finishes the proof.
\end{proof}

\begin{lemma}

Suppose that

\par (1)  $K^{*}$ is a weak$^{*}$ bounded closed convex subset  of $B\left(X^{*}\right)$ and $x \in S\left(X\right)$;

\par (2) there exists a point ${z^*} \in {K^*} \cap \{ {x^*} \in {X^*}:{x^*}(x) > 0\} $ such that ${z^*}(x)>0$;

\par (3) pick a real number $\varepsilon\in (0,+\infty)$ and define the weak$^{*}$ convex set
\[
{C^*} = \overline {co}^{w^{*}}  \left( {{K^*} \cup \left\{ {x^*} \in {X^*}:\left\| {{x^*}} \right\| \leq ({4}/{\varepsilon})  \sup_{{x^*} \in {K^*}} \left\| {{x^*}} \right\|,~{x^*}(x) = 0\right\} } \right);
\]

\par (4) there is a weak$^{*}$ slice $S( y  ,C^{*},\beta )$ with $y\in S(X)$ and a real number $\beta\in (0,1)$ so that $S\left( y ,C^{*},\beta  \right)-S\left( y  ,C^{*} ,\beta\right)\subset  W$
and $S\left( y  ,C^{*},\beta \right)\subset \{ {x^*} \in {X^*}:{x^*}(x) > 0\}$.
\\
Then, for the weak$^{*}$ slice $S\left( y  ,K^{*} ,\beta\right)$, we have
\par (a)  $S\left( y  ,K^{*},\beta \right)-S\left( y   ,K^{*},\beta\right)\subset  W$;
\par (b) $S\left( y  ,K^{*} ,\beta\right)\subset \{ {x^*} \in {X^*}:{x^*}(x) > 0\}$;
\par (c) $\left\| {x - y} \right\| < \varepsilon $.

\end{lemma}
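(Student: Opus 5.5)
The plan is to show first that the slice $S(y,C^{*},\beta)$ does not meet the "flat" part
\[
H^{*}=\left\{x^{*}\in X^{*}:\|x^{*}\|\le R,\ x^{*}(x)=0\right\},\qquad R=(4/\varepsilon)\,s,\quad s=\sup_{x^{*}\in K^{*}}\|x^{*}\|.
\]
From this I would derive $\sigma_{C^{*}}(y)=\sigma_{K^{*}}(y)$, which yields (a) and (b) at once, and then read off (c) from the size of $\sigma_{H^{*}}(y)$. Throughout I take $W$ to be the weak$^{*}$ neighbourhood of the origin appearing in hypothesis (4).

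\emph{Step 1.} The set $H^{*}$ is weak$^{*}$ compact and $H^{*}\subset C^{*}$. Every point of $H^{*}$ satisfies $x^{*}(x)=0$, whereas by (4) every point of $S(y,C^{*},\beta)$ satisfies $x^{*}(x)>0$. Hence $H^{*}\cap S(y,C^{*},\beta)=\emptyset$, that is, $h^{*}(y)<\sigma_{C^{*}}(y)-\beta$ for every $h^{*}\in H^{*}$. Since the supremum over the compact set $H^{*}$ is attained, this gives $\sigma_{H^{*}}(y)<\sigma_{C^{*}}(y)-\beta$.

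Next, $\sigma$ of a weak$^{*}$ closed convex hull is the maximum of the $\sigma$'s of its pieces, so $\sigma_{C^{*}}(y)=\max\{\sigma_{K^{*}}(y),\sigma_{H^{*}}(y)\}$. Combined with the previous inequality, this forces $\sigma_{C^{*}}(y)=\sigma_{K^{*}}(y)$. Because $K^{*}\subset C^{*}$ and the two sets have the same supremum at $y$, we get $S(y,K^{*},\beta)\subset S(y,C^{*},\beta)$. Statements (a) and (b) then follow immediately from (4), since $S(y,K^{*},\beta)-S(y,K^{*},\beta)\subset S(y,C^{*},\beta)-S(y,C^{*},\beta)\subset W$.

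\emph{Step 2 (for (c)).} By Hahn--Banach duality, $\sigma_{H^{*}}(y)=R\cdot\mathrm{dist}(y,\mathbb{R}x)$. Hypothesis (2) gives $s>0$, and since $\|y\|=1$ we have $\sigma_{K^{*}}(y)\le s$. Step 1 therefore gives $(4/\varepsilon)s\,\mathrm{dist}(y,\mathbb{R}x)<s$, i.e.\ $\mathrm{dist}(y,\mathbb{R}x)<\varepsilon/4$. Choose $t$ with $\|y-tx\|<\varepsilon/4$. Since $\|x\|=\|y\|=1$, we have $\bigl||t|-1\bigr|<\varepsilon/4$, and hence $\|y-x\|<\varepsilon/2$ or $\|y+x\|<\varepsilon/2$. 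This is precisely the dichotomy of Lemma 1.9, which could alternatively be invoked here by testing against the functionals in $H^{*}$.

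\emph{Main obstacle: fixing the sign.} It remains to exclude $\|y+x\|<\varepsilon/2$ (i.e.\ $t<0$). My first attempt is the following. Take a maximizer $k^{*}\in K^{*}$ with $k^{*}(y)=\sigma_{K^{*}}(y)$. By (b), $k^{*}(x)>0$. Then
\[
k^{*}(y)=-k^{*}(x)+k^{*}(y+x)<(\varepsilon/2)s.
\]
On the other hand, Step 1 gives the lower bound $\sigma_{K^{*}}(y)>\sigma_{H^{*}}(y)+\beta\ge\beta$. To reach a contradiction I need the lower bound on $\sigma_{K^{*}}(y)$ to exceed $(\varepsilon/2)s$. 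I would try to obtain this by playing the strict positivity in (b) against a point $h^{*}\in H^{*}$ with $h^{*}(y)$ close to $R\,\mathrm{dist}(y,\mathbb{R}x)$, or by using $z^{*}$ from (2), for which $z^{*}(y)\approx -z^{*}(x)<0$. Failing that, I would sharpen the constant $4/\varepsilon$ in the estimate of Step 2.

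I expect this sign determination to be the delicate point. I am not certain that the hypotheses as written exclude $y\approx -x$ without a further argument of this type; Steps 1 and 2 are routine.
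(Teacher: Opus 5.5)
Your Step 1 and your deduction of (a) and (b) match the paper's argument. Computing $\sigma_{H^{*}}(y)=R\,\mathrm{dist}(y,\mathbb{R}x)$ directly is a clean replacement for the paper's use of Lemma 1.9. However, (c) is not proved, because the case $\|y+x\|<\varepsilon/2$ is never excluded. Your attempt stalls because in Step 2 you weaken $\sigma_{H^{*}}(y)<\sigma_{K^{*}}(y)-\beta$ to $\sigma_{H^{*}}(y)<s$. After that, the only lower bound you have for $k^{*}(y)$ is $\beta$, and $\beta$ cannot in general beat $(\varepsilon/2)s$. No change of the constant $4/\varepsilon$ is needed, and no use of $z^{*}$ beyond $s>0$.

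The missing idea is to keep the self-referential bound. Let $d=\mathrm{dist}(y,\mathbb{R}x)$ and let $k^{*}\in K^{*}$ be a maximizer at $y$. Step 1 gives $Rd<k^{*}(y)-\beta<k^{*}(y)$. Choose $t$ with $\|y-tx\|=d$; the minimum is attained. Suppose $t\le 0$. Then $|1+t|=\bigl|\|y\|-\|tx\|\bigr|\le d$, so $\|x+y\|\le 2d$. Since $k^{*}(x)>0$ by (b),
\[
k^{*}(y)<k^{*}(x+y)\le s\|x+y\|\le 2sd=\tfrac{\varepsilon}{2}\,Rd<\tfrac{\varepsilon}{2}\,k^{*}(y).
\]
This is impossible for $\varepsilon\le 2$, because $k^{*}(y)>\beta>0$. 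Hence $t>0$, and $\|x-y\|\le 2d<\varepsilon k^{*}(y)/(2s)\le\varepsilon/2$. For $\varepsilon>2$, (c) is trivial since $\|x-y\|\le 2$.

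This is exactly the paper's mechanism. The paper obtains $\|x\pm y\|\le 2u^{*}(y)/\lambda$ with $u^{*}(y)$ itself on the right-hand side. It then rules out the alternative $\|x+y\|\le 2u^{*}(y)/\lambda$ via $u^{*}(y)\le u^{*}(x+y)\le m\|x+y\|$, which forces $\lambda\le 2m$, i.e.\ $\varepsilon\ge 2$. The paper tacitly takes $\varepsilon<1$ at that point, although the statement allows every $\varepsilon>0$; the large-$\varepsilon$ case is covered as above.
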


\begin{proof}
Since $K^{*}$ is a weak$^{*}$ bounded closed convex subset  of $B(X^{*})$, we get that $C^{*}$ is a weak$^{*}$ bounded closed convex subset of $X^{*}$.
Moreover, by the condition (2), we get that $K^{*} \cap \{ {x^*} \in {X^*}:{x^*}(x) > 0\}\neq \emptyset$.
Define $\lambda  = (4/\varepsilon )\sup \left\{ \left\| {{x^*}} \right\|:{x^*} \in {K^*}\right\} $. Then, by $K^{*}\subset C^*$, we obtain that
\[
\left({C^*} \cap \{ {x^*} \in {X^*}:{x^*}(x) > 0\} \right)\supset \left( K^{*} \cap \{ {x^*} \in {X^*}:{x^*}(x) > 0\} \right)\neq \emptyset.
\]
Moreover, by the condition (4), we know that there exists a weak$^{*}$ slice $S\left( y  ,C^{*},\beta \right)$ with $y\in S(X)$ and a real number $\beta\in (0,1)$ such that
\[
S\left( y  ,C^{*},\beta \right)-S\left( y  ,C^{*} ,\beta\right)\subset  W~~\quad~\mathrm{and}~\quad~~S\left(y ,C^{*},\beta\right) \subset \{ {x^*}\in {X^*}:{x^*}(x) > 0\}.
\]
We claim that $\sup \left\{ {y^*}(y):{y^*} \in {K^*}\right\} =\sup \left\{ {y^*}(y):{y^*} \in {C^*}\right\} $. Otherwise, we have
$\sup \left\{ {y^*}(y):{y^*} \in {K^*}\right\} < \sup \left\{ {y^*}(y):{y^*} \in {C^*}\right\} $. Therefore, by the definition of ${C^*}$ and $\sup \left\{ {y^*}(y):{y^*} \in {K^*}\right\} < \sup \left\{ {y^*}(y):{y^*} \in {C^*}\right\} $, there exists a point
\[
y_{0}^{*}\in \left\{ {x^*} \in {X^*}:\left\| {{x^*}} \right\| \leq \lambda=({4}/{\varepsilon})\sup_{{x^*} \in {K^*}} \left\| {{x^*}} \right\|,~{x^*}(x) = 0\right\}
\]
such that $y_{0}^{*}\in S\left(y ,C^{*},\beta\right)$. Thus $S\left(y,C^{*},\beta \right) \cap N^{*}\neq\emptyset$, where
$
N^{*}=\{ {x^*}\in {X^*}:{x^*}(x) = 0\},
$
which contradicts
$S\left(y ,C^{*},\beta\right) \subset \{ {x^*}\in {X^*}:{x^*}(x) > 0\}$.
Therefore, by
\[
\sup \left\{ {y^*}(y):{y^*} \in {K^*}\right\} =\sup \left\{ {y^*}(y):{y^*} \in {C^*}\right\}~\quad~\mathrm{and}~\quad~ {K^*}\subset {C^*},
\]
we get that $S\left( y  ,K^{*},\beta \right)-S\left( y  ,K^{*} ,\beta\right)\subset  W$ and $S\left( y  ,K^{*} ,\beta\right)\subset \{ {x^*} \in {X^*}:{x^*}(x) > 0\}$.
Hence we get that (a) and (b) are true.
We  next will prove that $\| x - y \| < \varepsilon $. In fact, by $S\left(y ,C^{*},\beta\right)\cap N^{*}=\emptyset$, we obtain that ${h^*}(y)<\sup \left\{ {y^*}(y):{y^*} \in {C^*}\right\} - \beta$
for every ${h^*}\in \{ {x^*} \in {X^*}:\left\| {{x^*}} \right\| < \lambda ,~{x^*}(x) = 0\} \subset N^{*}$. Hence we obtain that
\[
{u^*}(y) + \beta  \ge \sup \left\{ {y^*}(y):{y^*} \in {K^*}\right\}=\sup \left\{ {y^*}(y):{y^*} \in {C^*}\right\}   > {h^*}(y) + \beta ,
\]
where
\[
{u^*} \in S\left(y ,{K^*},\beta\right)~~\quad~\mathrm{and}~\quad~~{h^*} \in \{ {x^*} \in {X^*}:\left\| {{x^*}} \right\| < \lambda ,~{x^*}(x) = 0\} \subset N^{*}.
\]
It follows that  for every ${h^*} \in \{ {x^*} \in {X^*}:\left\| {{x^*}} \right\| < \lambda ,~{x^*}(x) = 0\} \subset N^{*}$, we obtain that ${u^*}(y) > {h^*}(y)$. Hence we obtain that
\[
{z^*}(y) < \frac{{{u^*}(y)}}{\lambda }~\quad~\mathrm{whenever}~\quad~~~{z^*} \in \{x^{*}\in X^{*}: \|x^{*}\|<1\} \cap \{ {x^*} \in {X^*}:{x^*}(x) = 0\} .
\]
Therefore, by the symmetry of set $\{x^{*}\in X^{*}: \|x^{*}\|<1\}$, it is easy to see that
\[
\left|{z^*}(y)\right|\leq \frac{{{u^*}(y)}}{\lambda },~\quad~{z^*} \in B\left(X^{*}\right) \cap \{ {x^*} \in {X^*}:{x^*}(x) = 0\}.~~~~~~~~~~~~~~~~~~~~~~~~~~~~~~~~~~~~~~~~~~\eqno~~~~~~~~~~~~~~~~~~~~~~~~~~~~~~~~~~~~~~~~~~~~~~~~~~~~(2.5)
\]
Therefore, by the formula (2.5) and $\lambda>0$, we get that ${u^*}(y)\geq 0 $. We claim that
${u^*}(y) > 0 $. In fact, suppose that ${u^*}(y) = 0 $. Pick a point
\[
{z_{0}^*} \in \{x^{*}\in X^{*}: \|x^{*}\|<1\} \cap \{ {x^*} \in {X^*}:{x^*}(x) = 0\}.
\]
Then, by the formula (2.5) and the symmetry of $\{x^{*}\in X^{*}: \|x^{*}\|<1\}$, we have
\[
\lambda{z_{0}^*}(y)<{u^*}(y) =0~~\quad~\mathrm{and}~\quad~~-{z_{0}^*} \in \{x^{*}\in X^{*}: \|x^{*}\|<1\} \cap \{ {x^*} \in {X^*}:{x^*}(x) = 0\}.
\]
Thus $-\lambda{z_{0}^*}(y)>0$ and $-{z_{0}^*}(y)<{u^*}(y)/\lambda$. Since $\lambda>0$, we obtain that ${z_{0}^*}(y)<0$. Then, by
$-{z_{0}^*}(y)<{u^*}(y)/\lambda$, we have ${u^*}(y)>0$, a contradiction. Then ${u^*}(y)>0$.
Noticing that $y\in S\left(X\right)$,
by Lemma 1.9 and the formula (2.5),
we obtain that
\[
\left\| x - y \right\| \leq \frac{ 2u^*(y)}{\lambda }~\quad~~\mathrm{or}~\quad~~\left\| x + y \right\| \leq \frac{2{u^*}(y)}{\lambda }.~~~~~~~~~~~~~~~~~~~~~~~~~~~~~~~~~\eqno~~~~~~~~~~~~~~~~~~~~~~~~~~~~~(2.6)
\]
Suppose that $\left\| x + y \right\| \le 2u^*(y)/\lambda $. Then, by $S(y ,K^{*},\beta) \subset \{ x^*\in X^*:{x^*}(x) > 0\}$ and ${u^*} \in S(y,{K^*},\beta )$, we get that
${u^*}(x)>0$. Let $m= \sup \left\{ \left\| x^* \right\|\in R:{x^*} \in K^*\right\} $. 
Then, by ${u^*}(x)>0$, we have $\left\|{u^*}\right\|>0$. Therefore, by ${u^*} \in S\left(y,{K^*},\beta \right)$, 
we obtain that $m\geq \|{u^*}\|>0$. 
Therefore,
by ${u^*}(x)>0$ and $u^{*}(y)>0$, we obtain that
\[
0<\frac{{{u^*}(y)}}{m} \le \frac{{{u^*}(y)}}{{\left\| {{u^*}} \right\|}} \le \frac{ \langle u^*, x + y\rangle }{\left\| u^* \right\|} \le \left\| x + y \right\| \leq \frac{{2{u^*}(y)}}{\lambda }.
\]
Then $2m \geq \lambda $.
Therefore, by $\varepsilon\in (0,1)$ and $\lambda  = 4m{\varepsilon ^{ - 1}}$, we get that $2m \geq \lambda  =$ $ 4m{\varepsilon ^{ - 1}} > 4m > 0$, a contradiction. Then, by the formula (2.6), we obtain that
\[
\left\| x - y \right\| \le \frac{2{u^*}(y)}{\lambda } \le \frac{2\left\|u^*\right\|\left\|y\right\|}{\lambda } \le \frac{2m\left\|y\right\|}{\lambda } <   \frac{4m}{\lambda } = \varepsilon .~~~~~~~~~~~~~~~~~~~~~~~\eqno~~~~~~~~~~~~~~~~~~~~~~~~~~(2.7)
\]
Hence we obtain that the condition (c) is true, which completes the proof.
\end{proof}

\begin{lemma}
Suppose that $\overline{co}^{w^{*}}(w^{*}\mathrm{exp} C^{*})$ has the non-empty interior for every
symmetric inner non-empty weak$^{*}$ closed bounded convex set $C^{*}\subset X^{*}\times R$.
Then the space $X$ is a G$\hat{a}$teaux differentiability space, where ${w^*} \exp {C^*}$ denotes the set of weak$^{*}$ exposed points of ${C^*}$.

\end{lemma}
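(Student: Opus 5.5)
The plan is to argue by contraposition through Theorem 1.5. Fix a continuous convex function $f$ on $X$ (the case of an open convex domain $D$ is handled in the same way) and a nonempty open set $U\subset X$. We must show that $f$ is G\^ateaux differentiable at some point of $U$. Choose $x_0\in U$ and $\delta>0$ with $B(x_0,2\delta)\subset U$ and $f$ bounded (hence Lipschitz) on $B(x_0,2\delta)$. Take $c>0$ large enough that $f(x_0\pm x)<c$ for $\|x\|\le\delta$, and put
\[
W=\left\{(x,t)\in X\times R:\ \|x\|\le\delta,\ t\ge f(x_0+x)-c,\ -t\ge f(x_0-x)-c\right\}.
\]
The map $(x,t)\mapsto(-x,-t)$ interchanges the two inequalities, so $W$ is symmetric. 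It is also closed, convex and bounded, and it contains a neighbourhood of $(0,0)$. Let $C^{*}\subset X^{*}\times R$ be the polar of $W$. Then $C^{*}$ is a symmetric, bounded, weak$^{*}$ closed convex set with nonempty interior, and $\sigma_{C^{*}}$ is the Minkowski gauge $p_W$ of $W$.

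By Theorem 1.5, every weak$^{*}$ exposed point $(x^{*},s)$ of $C^{*}$ equals $dp_W(z)$ for some $z$ at which $p_W$ is G\^ateaux differentiable. By homogeneity we may take $z\in\partial W$, and then $(x^{*},s)$ is the unique supporting functional of $W$ at $z$ normalised by $\langle (x^{*},s),z\rangle=1$. Now split according to where $z$ lies on $\partial W$; there are three parts.

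(i) \emph{Side of the cylinder.} Here $\|x\|=\delta$ and both inequalities are strict. Near $z$ the set $W$ coincides with $\{\|x\|\le\delta\}\times R$, so the unique normal has $s=0$.

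(ii) \emph{Bottom graph.} Here $t=f(x_0+x)-c$ with $\|x\|<\delta$, and the other constraint is inactive. The supporting functionals of $W$ at $z$ are then exactly the multiples of $(\xi,-1)$ with $\xi\in\partial f(x_0+x)$. Uniqueness of the normal therefore forces $\partial f(x_0+x)$ to be a singleton, that is, $f$ is G\^ateaux differentiable at $x_0+x\in U$. The top graph is symmetric and gives differentiability at $x_0-x\in U$.

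(iii) \emph{Corners.} Here $\|x\|=\delta$ and a graph constraint is active. The normal cone contains both $(x^{*},0)$ with $x^{*}$ norming $x$ and $(\xi,-1)$-type vectors, so it is not a ray. Hence $p_W$ is not differentiable there. The same argument applies where the two graphs meet, if they do.

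Now suppose, for contradiction, that $f$ has no point of G\^ateaux differentiability in $U$. By the case analysis, every weak$^{*}$ exposed point of $C^{*}$ lies in the weak$^{*}$ closed hyperplane $H=\{(x^{*},s): s=0\}$. Hence $\overline{co}^{w^{*}}(w^{*}\mathrm{exp}\,C^{*})\subset H$, and this set has empty interior, contradicting the hypothesis. Since $U$ was arbitrary, the differentiability points of $f$ are dense, so $X$ is a G\^ateaux differentiability space.

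The step I expect to be the main obstacle is the boundary analysis in (ii) and (iii): making rigorous that the normal cone of $W$ at a point of the graph is exactly the cone generated by $\partial f\times\{-1\}$, and that at a corner it is strictly larger than a ray. For (ii) I would use the standard formula for the subdifferential of a gauge at a boundary point together with local convexity. For (iii) I would note that the normal cone of an intersection of convex sets with a common interior point is the sum of their normal cones, which is a genuine two-dimensional cone at a corner. I would also check the side case (i) carefully, using that the graph constraints are strict in a neighbourhood of $z$.
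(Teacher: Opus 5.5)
Your argument is correct, and it takes a genuinely different, more economical route than the paper.

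\textbf{The paper's route.} It works in two stages.
\begin{itemize}
\item In Step 1 it shows that $\sigma_A$ is G\^ateaux differentiable on a dense subset of $X\times R$ for \emph{every} symmetric body $A\subset X^*\times R$. To do this it adjoins a large flat piece $N$ orthogonal to a prescribed direction $(x,r)$. It then applies the hypothesis to $\overline{co}^{w^{*}}(A\cup N)$ to obtain an exposed point off $N$, places it in $A$ via Lemma 2.2, and uses Lemma 1.9 to force the exposing vector to be $\varepsilon$-close to $\pm(x,r)$.
\item In Step 2 it applies this to the polar of the body $D(g,\lambda_0)$ and reads off differentiability of $g$ from a subgradient at a graph point.
\end{itemize}

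\textbf{Your route.} You apply the hypothesis only once, to the polar of your $W$, and only in the weak form ``some exposed point has $s\neq 0$''. Your side/graph/corner analysis then identifies every such point with a differentiability point of $f$ in $U$.

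\textbf{What each buys.}
\begin{itemize}
\item You bypass Lemmas 2.2 and 1.9 entirely.
\item Normalising $z\in\partial W$ also settles a point the paper's Step 2 passes over. Density of the differentiability set $G$ in $X\times R$ does not by itself give a differentiability point lying on the graph of $g_{\lambda_0}$; one needs positive homogeneity of the gauge to project radially onto the graph part of $\partial D(g,\lambda_0)$. Your normalisation does exactly this.
\item The paper's route yields the stronger intermediate fact that every such $\sigma_A$ is differentiable on a dense set. This is in effect (2)$\Rightarrow$(4) of Theorem 2.7 for the space $X\times R$, which your argument does not give.
\end{itemize}

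\textbf{Small points to tidy.}
\begin{itemize}
\item Choose $c$ with $\sup_{B(x_0,\delta)}f\le c-1$. Then $W\supseteq\{\|x\|\le\delta\}\times[-1,1]$ uniformly, so $C^*$ is indeed bounded.
\item The two graphs never meet, since $f(x_0+x)-c<0<c-f(x_0-x)$.
\item At a corner you need only the trivial inclusion $N_Q(z)\subseteq N_W(z)$ for each constraint set $Q\supseteq W$. This already yields the two non-proportional normals $(x^*,0)$ and $(\xi,-1)$, so no constraint qualification is required.
\end{itemize}
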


\begin{proof}
To explain it clearly, we  will divide the proof into two steps.

\par \textbf{Step 1.} Let $A$ be a symmetric inner non-empty bounded weak$^{*}$ closed convex subset of $X^{*}\times R$. Then we get that $0\in \mathrm{int} (A)$. Moreover,
we can assume without loss of generality that $A\subset B\left(X^{*}\times R\right)$.
We next will prove that the  functional
\[
{\sigma _{{A}}}\left( {{x},r} \right) = \sup \left\{ {\left\langle {({x},r),\left( {{x^{*}},l} \right)} \right\rangle :({x^{*}},l) \in {A}} \right\},~\quad~\left( {{x},r} \right)\in X \times R
\]
is G$\mathrm{\hat{a}}$teaux
differentiable on a dense subset $D$ of $X \times R$.
We pick a point $( x,r)\in X\times R$ such that $\left\|(x,r)\right\|=1$. Pick a real number $\varepsilon\in (0,1/8 )$.
Define the set
\[
N = \left\{( x^{*},l)\in X^{*}\times R:\left \langle( x,r), ( x^{*},l)\right \rangle=0,~\left\|( x^{*},l) \right\|\leq \frac{2}{\varepsilon} \right\}.
\]
Then we obtain that $N$ is a bounded weak$^{*}$ closed subset of $X^{*}\times R$. Define the set
$A_1 = {\overline {co}^{w^{*}} }\left( {A\cup N} \right)$. Then  $A_{1}$ is a symmetric inner non-empty bounded weak$^{*}$ closed convex subset of $X^{*}\times R$.
By the hypothesis, we get that $\overline{co}^{w^{*}}(w^{*}\mathrm{exp} A_{1})$ has the non-empty interior.
Hence there exists a point $(y^{*},d)\in w^{*}\mathrm{exp} A_{1}$ so that $(y^{*},d)\notin $ $  N $. Then we obtain that there exists a point  $(y,l)\in S(X\times R)$ such that $(y^{*},d)$ is exposed by $(y,l)$.
Since the set $A_1$ is  symmetric, we get that $-(y^{*},d)$ is a weak$^{*}$ exposed point of $A_1$ and is exposed by $-(y,l)\in S(X\times R)$.
Since the sets ${A }$ and $ N $ are two weak$^{*}$ closed subsets of $X^{*} \times R$, we get that
$A\cup N$ is a weak$^{*}$ closed set. Therefore, by Lemma 2.2, we get that $({y^{*}},d)\in  {A\cup N}$.
Therefore, by $({y^{*}},d)\in  {A\cup N}$ and $(y^{*},d)\notin N $, we get that $({y^{*}},d)\in  A $.

\par Since the point $({y^{*}},d)$ is a weak$^{*}$ exposed point of $A_1$ and $(y,l)\in S( X \times R) $ is an exposed functional, by $A\subset A_{1}$,
we get that $({y^*},d)$ is a  weak$^{*}$ exposed point of $A$ and $(y,l)$ is an exposed functional. Therefore, by the Proposition 6.9 of [5], we get that
$(y,l)$ is a G$\mathrm{\hat{a}}$teaux
differentiable point of ${\sigma _{{A}}}$. Since $A$ is symmetric,
we get that $-(y,l)$ is a G$\mathrm{\hat{a}}$teaux
differentiable point of ${\sigma _{{A}}}$. Let $({z_0^*},h_0) \in N\subset A_{1}$. Then, by the formula
$A\subset B(X^{*}\times R)$, we get that $\left\| ({y^{*}},d )\right\| \le 1$. Since $({y^*},d)$ is a weak$^{*}$ exposed point of $A$ and $(y,l)$ is a weak$^{*}$ exposed functional, by
$\left\| ({y^{*}},d )\right\| \le 1$ and $\left\|( y,l ) \right\|  = 1$, we have the following inequalities
\[
\left\langle \left( y,l \right), \left( ({z_0^*},h_0)  \right) \right\rangle  \le \sigma _{A_1}\left( {y,l} \right) =\langle(y^{*},d), ( y,l )\rangle\leq \left\| (y^{*},d)\right\| \left\|( y,l ) \right\|  = 1.
~~~~~~~~~~~~~~~~~~~~~~~~~~~~~~~~~~~~~~~~~~~~~~~~~~~~~~~~~~~~\eqno~~~~~~~~~~~~~~~~~~~~~~~~~~~~~~~~~~~~~~~~  (2.8)
\]
Therefore, by symmetry of $N$ and the formula (2.8), we have $\left| {\left\langle {\left( {y,l} \right), \left( {{z^{*}},{h}} \right)} \right\rangle } \right| \le 1$
for all $({z^{*}},h) \subset N\subset A_{1}$. Therefore, by Lemma 1.9, we obtain that either
\[
\left\| \left( x,r\right)  - \left( y,l\right )\right\| \leq \varepsilon ~~~\quad~~~~\mathrm{or}~~~~\quad~~~~~ \left\| ( x,r) - \left( -y,-l  \right)\right\| \leq \varepsilon .
\]
Since ${\sigma _{{A}}}$ is G$\mathrm{\hat{a}}$teaux
differentiable at the points $( y,l )$ and $( -y,-l )$, by the above formula and the arbitrariness of $( x,r)\in S\left(X\times R\right)$, we obtain that ${\sigma _{{A}}}$
is G$\mathrm{\hat{a}}$teaux
differentiable on a dense subset $D$ of $X \times R$.

\par  \textbf{Step 2.} Let $f$ be a continuous convex function on $X$. Pick a point $x_{0}\in X$ and a real number $\eta\in (0,1)$.
We next will prove that there exists a point $y_0\in B\left(0,\eta\right)$ $\subset B\left(X\right)$ such that $f$ is G$\mathrm{\hat{a}}$teaux
differentiable at the point $y_0+x_{0}\in X$. Let
\[
g\left(x\right)=f\left(x+x_{0}\right)-f\left(x_{0}\right)-4,\quad x\in X. ~~~~~~~~~~~~  \eqno~~~~~~~~~~~~   (2.9)
\]
Since the function $f$ is a continuous convex function on $X$, by the formula (2.9), we get that $g$ is a continuous convex function on $X$ and $g(0)=-4$. Let $\mathrm{epi}g=\left\{ ( x,r)\in X \times R: g(x)\leq  r       \right\}$.
Define the set
\[
E \left(g, \lambda\right)= \left\{ ( x,r)\in X \times R: g(x)\leq  r       \right\} \cap \left\{ ( x,t)\in X \times R:  | t | \leq \lambda,~x\in  \lambda B(X)     \right\}
\]
for each $\lambda\geq 5$.
Then the set $E (g, \lambda)$ is an inner non-empty bounded closed convex subset of $X\times R$ and $(0,0)\in \mathrm{int} E\left(g, \lambda\right)$. Hence we define the closed convex set
\[
D \left(g, \lambda\right)=E \left(g, \lambda\right) \cap \left(-E \left(g, \lambda\right)\right) ~~~~~~~ ~~~~~~~~~~~~~~~~~ \eqno  ~~~~~~~~~~~~~~~~~~~~~~~~ (2.10)
\]
for all $\lambda\geq 5$. Since $E (g, \lambda)$ is a inner non-empty bounded closed convex subset of $X\times R$ and $(0,0)\in \mathrm{int} E(g, \lambda)$, by the definition of $D (g, \lambda)$,
we get that $D (g, \lambda)$ is a symmetric inner non-empty bounded closed convex set and $(0,0)\in \mathrm{int} D (g, \lambda)$.
Moreover, since $g$ is a continuous convex function on $X$,
there exists a real number $\delta\in (0,\eta)$ so that $\left|g(x)-g(0)\right|<1$
whenever $x\in B(0,\delta)$.
We pick a real number
$\lambda_{0}\in (10,+\infty)$. Then we  define the function
\[
g_{\lambda_{0}}(x) = \inf \left\{ r\in R:(x,r)\in D \left(g, \lambda_{0}\right)\right\},\quad x\in T \left( D \left(f, \lambda_{0}\right) \right).                   \eqno                          (2.11)
\]
Therefore, by the formula $(0,0)\in \mathrm{int} D \left(g, \lambda_{0}\right)$, we obtain that $0\in T \left(\mathrm{int}  D (f, \lambda_{0})\right)$, where
$T$ is a mapping from $X \times R $ to $  X$ satisfy $T(x,r)=x$. Hence we may assume without loss of generality that $B(0,\delta)\subset T \left(\mathrm{int} D \left(f, \lambda_{0}\right)\right)$.  Therefore, by
\[
g\left(x\right) = \inf \left\{ r\in R:(x,r)\in \mathrm{epi}g \right\}~\quad~\mathrm{and}~\quad~ D \left(g, \lambda_{0}\right)\subset \mathrm{epi}g,
\]
we have $g(x)\leq g_{\lambda_{0}}(x)$ whenever $x\in TD \left(g, \lambda_{0}\right)$.
Noticing that $\left|g(x)-g(0)\right|<1$ whenever $x\in B(0,\delta)$, by the formula $g(0)=-4$, we obtain that
\[
-5=g(0)-1<g(x)<g(0)+1= - 3~\quad~\mathrm{whenever}~\quad~x\in B\left(0,\delta\right).           \eqno  (2.12)
\]
Therefore, by the definition of $E (g, \lambda_{0})$, we get that $(x,g(x))\in E(g, \lambda_{0})$ whenever $x\in B\left(0,\delta\right)$. Moreover, by the
formula (2.12), we have $3<-g(x)<5$ whenever $x\in B\left(0,\delta\right)$. Noticing that $x\in B\left(0,\delta\right)$, we get that $-x\in B\left(0,\delta\right)$. Therefore, by the formula (2.12), we obtain that $-5<g(-x)<-3$.
Therefore, by the definition of $E (g, \lambda_{0})$ and $\lambda_{0}>10$, we get that $\left(-x,-g(x)\right)\in E\left(g, \lambda_{0}\right)$. This implies that $(x,g(x))\in -E\left(g, \lambda_{0}\right)$.
Therefore, by the
formula (2.10) and $(x,g(x))\in E(g, \lambda_{0})$, we have $(x,g(x))\in D\left(g, \lambda_{0}\right)$ whenever $x\in B(0,\delta)$.
Then, by the definition of $g_{\lambda_{0}}$, we have $g_{\lambda_{0}}(x)\leq g(x)$ whenever  $x\in B(0,\delta)$. We have proved that
$g_{\lambda_{0}}(x)\geq g(x)$ whenever  $x\in B(0,\delta)$.
Hence we have $g(x)= g_{\lambda_{0}}(x)$ whenever  $x\in B\left(0,\delta\right)$.
Let
\[
D^{*} (g, \lambda_{0})=\left\{( x^{*},r^{*})\in X^{*}\times R  :   \left\langle( x^{*},r^{*}), ( x,r)     \right\rangle \leq 1 ,~( x, r)   \in D \left(g, \lambda_{0}\right)  \right\}.
\]
Then we get that $D^{*} (g, \lambda_{0})$ is a symmetric inner non-empty bounded weak$^{*}$ closed convex subset of $X^{*}\times R$. Therefore,
from the proof of Step 1, we get that ${\sigma _{D^{*} (g, \lambda_{0})}}$
is G$\mathrm{\hat{a}}$teaux
differentiable on a dense subset $G$ of $X \times R$.

\par We next prove that there exists a point $y_0\in B(0, \delta/2)$
such that $g_{\lambda_{0}}$ is G$\mathrm{\hat{a}}$teaux
differentiable at point $y_0\in X$. Since ${\sigma _{D^{*} (g, \lambda_{0})}}$
is G$\mathrm{\hat{a}}$teaux
differentiable on a dense subset $G$ of $X \times R$. Pick a point $y_0\in T\left(\mathrm{int} D \left(g, \lambda_{0}\right)\right)$ so that $y_0\in B\left(0, \delta/2\right)$ and
$\sigma _{D^{*} (g, \lambda_{0})}$ is G$\mathrm{\hat{a}}$teaux differentiable at the point $(y_0,f(y_0))\in X \times R$. Pick a point $y_{0}^{*}\in \partial g_{\lambda_{0}}(y_0)$.
Then, for every $(z,r)\in \mathrm{int} D \left(g, \lambda_{0}\right)$, we get that $r\geq g_{\lambda_{0}}(z)$. Since
$y_{0}^{*}\in \partial g_{\lambda_{0}}(y_0)$ and $r\geq g_{\lambda_{0}}(z)$, we obtain that
\[
y_{0}^{*}(z)-r\leq y_{0}^{*}(z)-g_{\lambda_{0}}(z)\leq y_{0}^{*}(y_{0})-g_{\lambda_{0}}(y_{0})~\quad~~~~~~~~~\mathrm{for}~~~~~~~\mathrm{every}~~~~~~~~~~~\quad~~(z,r)\in \mathrm{int} D \left(g, \lambda_{0}\right).
\]
Hence we obtain that $\left\langle({z_{0}^{*}},r_{0}),(y_{0},g_{\lambda_{0}}(y_{0}))\right\rangle =1$ and $({z_{0}^{*}},r_0)\in D^{*} (g, \lambda_{0})$,
where
\[
z_{0}^{*}=   \frac{y_{0}^{*}}{{y_{0}^{*}}\left(y_{0}\right) - g_{\lambda_{0}}\left(y_{0}\right)}~~\quad~~~~~~~~~\mathrm{and}~~~~~~~~~~~~\quad~~~r_{0}= \frac{-1}{{y_{0}^{*}}\left(y_{0}\right) - g_{\lambda_{0}}\left(y_{0}\right)}.
\]
Then we obtain that $\left({z_{0}^{*}},r_{0}\right)\in \partial \sigma_{D^{*} (g, \lambda_{0})}\left(y_{0},g_{\lambda_{0}}\left(y_{0}\right)\right)$. It follows that $y_0^{*}=-r_0z_{0}^{*}$.
Since ${\sigma _{D^{*} (g, \lambda_{0})}}$
is G$\mathrm{\hat{a}}$teaux differentiable at the point $(y_0,g_{\lambda_{0}}(y_0))$, we get that
 $g_{\lambda_{0}}$ is G$\mathrm{\hat{a}}$teaux  differentiable at the point $y_0\in X$.

\par Since $g(x)= g_{\lambda_{0}}(x)$ whenever  $x\in B\left(0,\delta\right)$, we get that the convex function $g$ is G$\mathrm{\hat{a}}$teaux
differentiable at the point $y_0\in X$. Therefore, by the formula (2.9), we get that
$f$ is G$\mathrm{\hat{a}}$teaux
differentiable at the point $y_0+x_{0}$. Hence we get that $f$ is G$\mathrm{\hat{a}}$teaux  differentiable at a dense subset of $X$. Hence
we get that $X$ is a G$\mathrm{\hat{a}}$teaux  differentiability space, which finishes the proof.
\end{proof}

We next will prove that Theorem 2.1.

\begin{proof}
Let $C^*$  denote a symmetric inner non-empty bounded weak$^{*}$ closed convex
subset of $M^*$. Then we obtain that
$0\in \mathrm{int}(C^*)$.
Since $C^*$ is a bounded set,
we can assume without loss of generality that $C^*\subset B\left(M^{*}\right)$.
Let
${w^*} \exp C^*$ denote the set of weak$^{*}$ exposed points of $C^*$.
We next will prove that $C^*=\overline {co}^{w^{*}}  \left( {w^*}\exp {C^*} \right)$. In fact, suppose that ${C^*}\backslash \overline {co}^{w^{*}}  \left( {{w^*}\exp {C^*}} \right) \ne  \emptyset$.
Then there exists a point $y_{0}^{*}\in {C^*}$ such that $y_{0}^{*}\in {C^*}\backslash \overline {co}^{w^{*}}  \left( {{w^*}\exp {C^*}} \right)$.
Hence we obtain that
\[
y_{0}^{*}\notin \overline {co}^{w^{*}}  \left( {{w^*}\exp {C^*}} \right) \ne  \emptyset. ~~~~~~~~~~~~~~~~~~~~~~~~~~~~~~~~~~~~\eqno~~~~~~~~~~~~~~~~~~~~~~~~~~~~~(2.13)
\]
Therefore, by the separation Theorem and the above formula,
there exists a point $x\in S\left(M\right)$ and a real number $r\in(0,1/64)$ such that
\[
 y_0^*\left(x\right) - 6r  \ge \sup \left\{ {z^*}(x):{z^*} \in \overline {co}^{w^{*}}  \left( {{w^*}\exp {C^*}} \right)\right\}.~~~~~~~~~~~~~~~~~~~~~~~~~~~~\eqno~~~~~~~~~~~~~~~~~~~~~~~~~~~~~(2.14)
\]
Pick a point $y^*\in S\left(x,C^*,r \right)$. Then we get that $y^*\left(x\right)\geq \sigma_{C^*}(x)-r >y_0^*\left(x\right) - 6r$.
Since $y^*\in S\left(x,C^*,r \right)$ is arbitrary, by the formula (2.14), we obtain that
\[
S\left(x,C^*,r \right) \cap \overline {co}^{w^{*}}  \left( {{w^*}\exp {C^*}} \right) = \emptyset. ~~~~~~~~~~~~~~~~~~~~~~~~~~~~~~~~~~~~~~~~\eqno~~~~~~~~~~~~~~~~~~~~~~~~~~~~~(2.15)
\]
Noticing that $ 0\in \mathrm{int}({C^*})$, there exists a real number $a\in (0,1)$ so that $\{x^{*}\in M^{*}: \|x^{*}\|\leq a\}\subset \mathrm{int}{C^*}$.
Therefore, by $x\in S\left(M\right)$ and $\{x^{*}\in M^{*}: \|x^{*}\|\leq a\}\subset \mathrm{int}{C^*}$, there exists a point $z^{*}\in {C^*} $ so that $z^{*}(x)= a$.
We next will use the weak$^{*}$ slice iterative selection method to derive
a contradiction.
To explain it clearly, we next will divide
the proof into four steps.

\textbf{Step 1.}
Let $y_{1}=x$ and $\beta_{1}=\min\left\{ r/256,a/256\right\}$. Then we obtain that $\beta_{1}>0$.
Define the weak$^{*}$ bounded closed convex subset $N_{1}^{*}$ of $M^*$, where
\[
N_{1}^{*}=\left\{ {x^*} \in M^*:\left\| x^* \right\| \leq \left(\frac{64}{\beta_{1}\cdot 2^{-1}}\right)  \sup_{y^* \in C^*} \left\| {{y^*}} \right\|,~x^*(x) ={x^*}(y_1) = 0\right\} .
\]
Let $I: M\rightarrow X$ denote an identity operator. Then we get that $I^{*}: X^{*}\rightarrow M^{*}\cong {{{X^*}} \mathord{\left/
 {\vphantom {{{X^*}} {{M^{\bot}}}}} \right.
 \kern-\nulldelimiterspace} {{M^{\bot}}}}$ and $\|I\|=\|I^{*}\|=1$, where the mapping $I^{*}$ denote adjoint operator of $I$.
 It is easy to see that $I^{*}$ is weak$^{*}$-to-weak$^{*}$ continuous and the range of  $I^{*}$ is $M^{*}$.
Moreover, by the open mapping Theorem, we obtain that $I^{*}$ is an open mapping.
Hence we get that $I^{*}\left(B\left(0,1\right)\right)$ contains a neighborhood of the origin in $M^{*}$.

\par Since $C^{*}$ is a bounded subset of $B(M^{*})$, by the Hahn-Banach Theorem, we get that for any $x^{*}\in C^{*}$, there exists a functional $y^{*}\in B(X^{*})$
such that $I^{*}(y^{*}) =x^{*}$. Let $ K_{\lambda}= B(X^{*})\cap (I^{*})^{-1}(C^{*})$. Then we get that
 $I^{*}(K_{\lambda})=C^{*}$.

\par We define $J=\{K':K'$ is a weak$^{*}$ closed, convex subset of $B(X^{*})$ with $0\in K'$ and $I^{*}(K')=C^{*}\}$. Therefore, by the containing relation, we define the order set $J$, i.e, $A\subset B$ if and only if $A<B$.
Let $J_0= \{K_{\alpha}^{*}\in J: \alpha\in\Delta_0\}$ denote a totally ordered subset. Then
$I^{*}\left(\cap_{\alpha\in\Delta} K_{\alpha}^{*}\right)=C^{*}$. It follows that $\cap_{\alpha\in\Delta} K_{\alpha}^{*}\in J$.
Then, by the Zorn Lemma, there exists a minimal element $K_{1}^{*}\in J$ such that  $I^{*}(K_{1}^{*})=C^{*}$.
Let
\[
H_{1}^{*}=K_{1}^{*}\cap (I^{*})^{-1}(N_{1}^{*}).
\]
Since $0\in \mathrm{int}C^*$, we get that $N_{1}^{*}\cap {C^*} \neq \emptyset$. Since $K_{1}^{*}$ is weak$^{*}$ closed, by the above formula, we get that
$H_{1}^{*}$ is a nonempty weak$^{*}$ closed convex set.
Define the set
\[
X_{1}^{*}= \left\{  \lambda x^{*} :    x^{*}\in  K_{1}^{*},~x^{*}|_{M}\notin \mathrm{int}|_{M^{*}}(C^{*}),~ \lambda\in R^{+}           \right\}.
\]
Then, by the definition of $X_{1}^{*}$, we obtain that $C^{*}\subset I^{*}(X_{1}^{*})$. We claim that
\[
K_{1}^{*}\supset X_{1}^{*}\cap (I^{*})^{-1}(C^{*}).
\]
In fact, suppose that there exists a point $z_{0}^{*}\in X_{1}^{*}\backslash K_{1}^{*}$ so that $I^{*}(z_{0}^{*})\in C^{*}$.
Since $K_{1}^{*}$ is a  weak$^{*}$ closed convex subset of $X^{*}$, by $0\in \mathrm{int} C^*$ and $0\in K_{1}^{*}$, there exists a real number $t_0\in [0,1]$
such that $I^{*}(t_0 z_{0}^{*})\in \mathrm{int}|_{M^{*}}(C^{*})$ and $t_0 z_{0}^{*}\in X_{1}^{*}\backslash K_{1}^{*}$.
Hence we can assume without loss of generality that
$I^{*}(z_{0}^{*})\in \mathrm{int}|_{M^{*}}(C^{*})$.
Noticing that $z_{0}^{*}\in X_{1}^{*}$, by the definition of $X_{1}^{*}$, there exists a real number $\lambda\in [0, +\infty)$ and
a point $y_{0}^{*} \in  K_{1}^{*}$ with $y_{0}^{*}|_{M}\notin \mathrm{int}|_{M^{*}}(C^{*})$ so that $y_{0}^{*}=\lambda z_{0}^{*}$.
We claim that $\lambda\in [0,1]$. In fact, suppose that $\lambda\in (1,+\infty)$.
Since the set $K_{1}^{*}$ is a weak$^{*}$ closed convex subset of $X^{*}$, by $y_{0}^{*}=\lambda z_{0}^{*}$ and $\lambda\in (1,+\infty)$, we get that $z_{0}^{*}\in \left[0, y_{0}^{*}\right]$.
Therefore, by $y_{0}^{*} \in  K_{1}^{*}$ and $0\in K_{1}^{*}$, we have $z_{0}^{*}\in K_{1}^{*}$, this is a contradiction. Hence $\lambda\in [0,1]$.
Therefore, by $y_{0}^{*}=\lambda z_{0}^{*}$
and
$I^{*}(z_{0}^{*})\in \mathrm{int}|_{M^{*}}(C^{*})$, we obtain that
\[
y_{0}^{*}|_{M}=I^{*}\left(y_{0}^{*}\right)=I^{*}\left(\lambda z_{0}^{*}\right)=\lambda I^{*}\left( z_{0}^{*}\right)\in \mathrm{int}|_{M^{*}}\left(C^{*}\right),
\]
this is a contradiction. Hence we get that $K_{1}^{*}\supset X_{1}^{*}\cap (I^{*})^{-1}(C^{*})$.
Define the set
\[
M_{1}^{*}= \left\{  \lambda x^{*} :    x^{*}\in  H_{1}^{*},~x^{*}|_{M}\notin \mathrm{int}|_{M^{*}}(C^{*}),~ \lambda\in R^{+}           \right\}.
\]
Then we have $M_{1}^{*}\subset X_{1}^{*}$. Noticing that $0\in \mathrm{int}\left(C^*\right)$, we define the nonempty set
\[
L_{1}^{*}=M_{1}^{*}\cap (I^{*})^{-1}(N_{1}^{*}).
\]
Thus $I^{*}(L_{1}^{*})=N_{1}^{*}$.
Since $K_{1}^{*}\supset X_{1}^{*}\cap (I^{*})^{-1}\left(C^{*}\right)$, we get that $L_{1}^{*}$ is bounded. Let
\[
D_{1}^{*}=\overline{co}^{w^{*}} \left(K_{1}^{*}\cup L_{1}^{*}\right)~~\quad~~~~~~~~~~~~~~\mathrm{and}~~~~~~~~~~~~~~~~~~\quad~~~~~~~~D_{1,0}^{*}=\overline{co}^{w^{*}} \left(C^{*}\cup N_{1}^{*}\right).
\]
Then we get that $D_{1}^{*}$ is a bounded weak$^{*}$ closed convex subset of $X^{*}$ and $D_{1,0}^{*}$ is a bounded weak$^{*}$ closed convex subset of $M^{*}$.
Moreover, we have $I^{*}(D_{1}^{*})=D_{1,0}^{*}$.

\par Let $\gamma_0= \sup \left\{\|x^{*}\|:x^{*}\in L_{1}^{*}\right\}$. Then, by the boundedness of $L_{1}^{*}$, we obtain that $\gamma_0\in (0,+\infty)$.
Since  $K_{1}^{*}$ is a bounded subset of $X^{*}$, by $ \mathrm{int}{C^*}\neq \emptyset$, there exists a real number $\theta\in (0,+\infty)$ so that
$\theta=\sup \left\{\|x^{*}\|:x^{*}\in K_{1}^{*}\right\}$.
Since the space $X$ is a G$\mathrm{\hat{a}}$teaux differentiability space, there exists a point $x_1\in S(X)$
such that

\par (1) the functional $\sigma_{K_{1}^{*}}$ is G$\mathrm{\mathrm{\hat{a}}}$teaux differentiable at the point $x_1\in X$;
\par (2) $\|x-x_1\|<256^{-1}\beta_1(\gamma_0+1)^{-1}(\theta+1)^{-1}$.
\\
Pick a point $x^{*}\in L_{1}^{*}$. Then, by the condition (2) and $x^{*}(x)=0$, we obtain that
\[
x^{*}(x_1)=x^{*}(x_1-x)+x^{*}(x)\leq \|x^{*}\|\cdot \|x-x_1\|\leq \gamma_0 \cdot \frac{\beta_1}{256(\gamma_0+1)(\theta+1)}\leq \frac{\beta_1}{256}.
\]
Noticing that $I^{*}(K_{1}^{*})=C^{*}$ and $z^*\in C^*$, there exists a point $x_0^{*}\in K_{1}^{*}$ such that $I^{*}\left(x_0^{*}\right)=z^*$. Therefore, by $K_{1}^{*}\subset B(X^{*})$, we get that $\|x_0^{*}\|\leq 1$ and $x_0^{*}(x)=z^*(x) $.
Therefore, by $z^*(x)=a$ and $\beta_{1}=\min\left\{ r/256,a/256\right\}$, we obtain that
\[
x_0^{*}\left(x_1\right)~=~ x_0^{*}\left(x\right)- x_0^{*}\left(x-x_1\right)\quad\quad \quad\quad \quad
\]
\[
\quad\quad \quad\quad \quad\quad \quad\quad \quad\quad \quad\quad =~ z^*\left(x\right)~-~ x_0^{*}\left(x-x_1\right)\quad \quad\quad \quad\quad \quad\quad \quad\quad \quad\quad \quad\quad \quad\quad \quad\quad \quad\quad \quad\quad \quad\quad \quad\quad\quad \quad\quad \quad\quad
\]
\[
\quad\quad \quad\quad \quad\quad \quad\quad \quad\quad \quad\quad \geq ~z^*\left(x\right)~-~\left\|x_0^{*}\right\|\cdot \left\|x-x_1\right\|\quad \quad\quad \quad\quad \quad\quad \quad\quad \quad\quad \quad\quad \quad\quad \quad\quad \quad\quad \quad\quad \quad\quad \quad\quad\quad \quad\quad \quad\quad
\]
\[
\quad\quad \quad\quad \quad\quad \quad\quad \quad\quad \quad\quad \geq~  a~-~\left(\theta+1\right)\frac{\beta_1}{256(\gamma_0+1)(\theta+1)} ~>~ \frac{1}{2}a. \quad \quad\quad \quad\quad \quad\quad \quad\quad \quad\quad \quad\quad \quad\quad \quad\quad \quad\quad \quad\quad \quad\quad \quad\quad\quad \quad\quad \quad\quad
\]
Therefore, by the above inequalities, we obtain that $\sigma_{K_{1}^{*}}\left(x_1\right)>\sigma_{L_{1}^{*}}\left(x_1\right) $.
We pick a real number $\eta_1\in (0,\beta_1/256)$. Since $K_{1}^{*}$ is a bounded weak$^{*}$ closed convex subset of $X^{*}$ and $L_{1}^{*}$ is a bounded subset of $X^{*}$,
by $\sigma_{K_{1}^{*}}(x_1)>\sigma_{L_{1}^{*}}(x_1) $ and Lemma 2.3, there exists a real number $\alpha_1\in  (0,\eta_1/2)$ such that
\[
S\left(x_1, D_{1}^{*},\alpha_1\right)\subset S\left(x_1, K_{1}^{*},\frac{1}{2}\eta_1\right)+ B\left(0, \frac{1}{2}\eta_1\right).~~~\eqno~~~~~~~~~~~~(2.16)
\]
Since the set $D_{1}^{*}$ is a weak$^{*}$ bounded closed convex subset of $X^{*}$, we obtain that
\[
D_{1}^{*}\backslash S\left(x_1,\alpha_1, D_{1}^{*}\right)=D_{1}^{*}\cap\left\{x^{*}\in X^{*}:x^{*}(x)\leq \sigma_{ D_{1}^{*}}(x)-\alpha_1\right\}~
\]
is a weak$^{*}$ closed convex set. Moreover, by the minimality of $ K_{1}^{*}$, we obtain that
\[
A^{*}=  I^{*}\left( K_{1}^{*}\setminus S\left(x_1, \alpha_{1}, K_{1}^{*} \right)\right)\neq I^{*}\left( K_{1}^{*}\right).~~~~~~~~~~~~~\eqno~~~~~~~~~(2.17)
\]
Hence we get that $C^{*}\backslash A^{*}\neq \emptyset$. We pick a point $y^{*}\in C^{*}\backslash A^{*}$ and define the set
\[
F^{*}= I^{*}\left( D_{1}^{*}\backslash S\left(x_1, \alpha_{1}, D_{1}^{*}\right)\right).~~~~~~~~~~~~~\eqno~~~~~~~~~(2.18)
\]
We claim that $y^{*}\in D_{1,0}^{*}\backslash F^{*}$. In fact, suppose that there exists a point $x^{*}\in D_{1}^{*}\backslash $ $ S\left(x_1,\alpha_{1}, D_{1}^{*}\right)$ so that
$y^{*}= I^{*}(x^*)$. Since $K_{1}^{*}\supset X_{1}^{*}\cap (I^{*})^{-1}(C^{*})$ and $y^{*}\in C^{*}\backslash A^{*}$
by the definition of $D_{1}^{*}$  and
$x^{*}\in D_{1}^{*}$,
we obtain that $x^{*}\in K_{1}^{*}$. Then, by $x^{*}\in D_{1}^{*}\backslash $ $ S\left(x_1,\alpha_{1}, D_{1}^{*}\right)$, we obtain that $x^{*}\in K_{1}^{*}\setminus S\left(x_1,\alpha_{1}, K_{1}^{*}\right)$. Hence we obtain that
$y^{*}= I^{*}(x^*)\in A^{*}$. However, by
$y^{*}\in C^{*}\setminus A^{*}$, we obtain that $y^{*}\notin A^{*}$, a contradiction. Hence we obtain that
$y^{*}\in D_{1,0}^{*}\backslash F^{*}$. This implies that
$D_{1,0}^{*}\setminus F^{*}\neq \emptyset$.

\par Pick a point
$x_{2}^{*}\in D_{1,0}^{*}\setminus F^{*}$. Then we get that $x_{2}^{*}\notin   F^{*}$.
Since $D_{1}^{*}\backslash S\left(x_1, \alpha_{1}, D_{1}^{*}\right)$ is a weak$^{*}$ bounded closed convex subset of $X^{*}$ and $I^{*}$ is weak$^{*}$-to-weak$^{*}$ continuous, by
$F^{*}= I^{*}\left( D_{1}^{*}\backslash S\left(x_1, \alpha_{1}, D_{1}^{*}\right)\right)$, we obtain that $F^{*}$ is a weak$^{*}$ bounded closed convex subset of $M^{*}$.
Since the set $F^{*}$ is a weak$^{*}$ bounded closed convex subset of $M^{*}$, by $x_{2}^{*}\notin   F^{*}$ and the separation Theorem, there exists a point $y_{2}\in S\left(M\right)$ and
a real number $\beta_{2}\in \left(0,\beta_1/256 \right)$ such that
\[
x_2^*\left(y_2\right) - 4 \beta_{2} > \sup \left\{ {x^*}({y_2}):{x^*} \in F^*\right\}.~~~~~~~~~~~~~~~~~~~~~~~~~\eqno~~~~~~~~~(2.19)
\]
We pick a point $x^*\in S\left(y_2 ,D_{1,0}^{*},\beta_{2}\right) $. Then, by the formula (2.19), we get that
\[
{x^*}\left(y_2\right)\geq {\sigma _{D_{1,0}^{*}}}({y_2}) - \beta_{2} > x_2^*\left(y_2\right) - 4\beta_{2} > \sup \left\{ {x^*}\left(y_2\right):{x^*} \in F^* \right \}.
\]
It follows that ${x^*} \not\in F^{*}$. Then, by ${x^*} \in D_{1,0}^{*}$, we obtain that ${x^*} \in D_{1,0}^{*}\backslash F^{*}$. Hence
\[
S\left({y_2},D_{1,0}^{*},\beta_{2}\right) = \left\{ {x^*} \in D_{1,0}^{*}:{x^*}({y_2}) \geq {\sigma _{D_{1,0}^{*}}}({y_2}) - \beta_{2}\right\} \subset D_{1,0}^{*}\backslash F^{*}.
\]
Therefore, by $I^{*}(D_{1}^{*})=D_{1,0}^{*}$ and $F^{*}=I^{*}\left( D_{1}^{*}\setminus S\left(x_1,\alpha_{1}, D_{1}^{*}\right)\right)$, we obtain that
\[
S\left({y_2},D_{1,0}^{*},\beta_{2}\right)\subset  D_{1,0}^{*}\backslash I^{*}\left( D_{1}^{*}\setminus S\left(x_1,\alpha_{1}, D_{1}^{*}\right)\right)\subset I^{*}\left( S\left(x_1,\alpha_{1}, D_{1}^{*}\right)\right).
\]
Pick a point $y^{*} \in S({y_2},D_{1,0}^{*},\beta_{2})$. Then, by  the formula (2.16) and $\eta_1\in (0,\beta_1/256)$, there exists a point
$x^{*} \in S(x_1,\alpha_{1}, D_{1}^{*})$ with $I^{*}(x^{*})=y^{*}$ such that $\|x^{*}\|\leq 2$.
Since $z^{*}(x)=a$ and $\|x-x_1\|<256^{-1}\beta_1(\gamma_0+1)^{-1}(\theta+1)^{-1}$, by $I^{*}(x_0^{*})=z^*$, we have
\[
~y^{*} \left(x\right)=x^{*} \left(x\right)~=~x^{*} \left(x_1\right)-x^{*} \left(x_1-x\right)\quad \quad\quad \quad\quad \quad\quad \quad
\]
\[
  \quad\quad\quad \quad \quad \quad\quad \quad\quad \quad\quad \quad\geq~ \sigma_{D_{1}^{*}}\left(x_1\right)~-~\alpha_{1}~-~x^{*} \left(x_1-x\right)\quad \quad \quad \quad\quad \quad \quad \quad\quad \quad \quad \quad\quad \quad \quad \quad\quad \quad \quad \quad\quad \quad \quad \quad\quad\quad \quad \quad \quad
\]
\[
 \quad \quad\quad \quad\quad \quad\quad \quad \quad \quad\quad \quad\geq~ x_0^{*}\left(x_1\right)~-~x^{*} \left(x_1-x\right)~-~\alpha_{1}\quad \quad \quad \quad\quad \quad \quad \quad\quad \quad \quad \quad\quad \quad \quad \quad\quad \quad \quad \quad\quad \quad \quad \quad\quad\quad \quad \quad \quad
\]
\[
  \quad\quad\quad \quad \quad\quad \quad\quad \quad \quad\quad \quad\geq~ x_0^{*}\left(x\right)~- ~ x_0^{*} \left(x-x_1\right)      ~ - ~x^{*} \left(x_1-x\right)~-~\alpha_{1}\quad \quad \quad \quad\quad \quad \quad \quad\quad \quad \quad \quad\quad \quad \quad \quad\quad \quad \quad \quad\quad \quad \quad \quad\quad\quad \quad \quad \quad
\]
\[
  \quad\quad\quad\quad \quad\quad \quad \quad \quad \quad\quad \quad\geq~ z^{*}\left(x\right)~-~  \left\|x_0^{*}\right\| \left\|x_1-x\right\|     ~ -~\left\|x^{*} \right\|\left\|x_1-x\right\|~-~\alpha_{1}\quad \quad \quad \quad\quad \quad \quad \quad\quad \quad \quad \quad\quad \quad \quad \quad\quad \quad \quad \quad\quad \quad \quad \quad
\]
\[
 \quad \quad\quad \quad \quad\quad \quad\quad \quad \quad\quad \quad\geq~ a~-~\frac{1}{256}\beta_1~-~\frac{2}{256}\beta_1~-~\alpha_{1}>\frac{1}{2}a. \quad \quad \quad \quad\quad \quad \quad \quad\quad \quad \quad \quad\quad \quad \quad \quad\quad \quad \quad \quad\quad \quad \quad \quad
\]
Since $y^{*} \in S\left({y_2},D_{1,0}^{*},\beta_{2}\right)$ is  arbitrary, by the above inequalities, we obtain that
\[
S\left({y_2},D_{1,0}^{*},\beta_{2}\right)\subset  \left\{ x^* \in M^*:{x^*}(x) > 0 \right\}.~~~~~~~~~~~~~~~~~~~~~~~~~~~~~~~~~~~~~~~~~~~\eqno~~~~~~~~~~~~~~~~~~~~~~~~~~~~~~~~~(2.20)
\]
Moreover, by the formula (2.16) and $S\left(x_1,\alpha_1, D_{1}^{*}\right)\subset S\left(x_1, D_{1}^{*},\alpha_1\right)$, we get that
\begin{eqnarray*}
&&S\left(x_1,\alpha_1, D_{1}^{*}\right)-S\left(x_1,\alpha_1, D_{1}^{*}\right)
\\
&\subset& S\left(x_1, D_{1}^{*},\alpha_1\right)-S\left(x_1, D_{1}^{*},\alpha_1\right)
\\
&\subset& \left[ S\left(x_1, K_{1}^{*},\frac{1}{2}\eta_1\right)-S\left(x_1, K_{1}^{*},\frac{1}{2}\eta_1\right)+ B\left(0, \frac{1}{2}\eta_1\right)-B\left(0, \frac{1}{2}\eta_1\right) \right].
\end{eqnarray*}
Since $I^{*}$ is a bounded linear operator, by the above formula and $S\left({y_2},D_{1,0}^{*},\beta_{2}\right)\subset   I^{*}\left( S\left(x_1,\alpha_{1}, D_{1}^{*}\right)\right)$, we have the following formula
\begin{eqnarray*}
&&S\left({y_2},D_{1,0}^{*},\beta_{2}\right)-S\left({y_2},D_{1,0}^{*},\beta_{2}\right)
\\
&\subset& I^{*}\left( S\left(x_1,\alpha_{1}, D_{1}^{*}\right)\right)-I^{*}\left( S\left(x_1,\alpha_{1}, D_{1}^{*}\right)\right)
\\
&\subset& I^{*}\left( S\left(x_1, D_{1}^{*},\alpha_{1}\right)\right)-I^{*}\left( S\left(x_1, D_{1}^{*},\alpha_{1}\right)\right)
\\
&\subset& I^{*}\left( S\left(x_1, D_{1}^{*},\alpha_{1}\right)- S\left(x_1, D_{1}^{*},\alpha_{1}\right)\right)=I^{*}\left( V_1\right),
\end{eqnarray*}
where
\[
V_1=\left[ S\left(x_1, K_{1}^{*},\frac{1}{2}\eta_1\right)-S\left(x_1, K_{1}^{*},\frac{1}{2}\eta_1\right)+ B\left(0, \eta_1\right) \right].
\]
Since $D_{1,0}^{*}=\overline{co}^{w^{*}} \left(C^{*}\cup N_{1}^{*}\right)$, by Lemma 2.4 and the formula (2.20), we get that
\[
S\left({y_2},C^*,\beta_{2}\right)-S\left({y_2},C^*,\beta_{2}\right)\subset I^{*}\left( S\left(x_1, D_{1}^{*},\alpha_{1}\right)- S\left(x_1, D_{1}^{*},\alpha_{1}\right)\right)=I^{*}(V_1) ,
\]
\[
S\left({y_2},C^*,\beta_{2}\right)\subset \{ {x^*} \in {M^*}:{x^*}(x) > 0\}~~\quad~~~~~~\mathrm{and}~~~~~~~~~\quad~~~~~~~~~\left\|y_1-y_2\right\|<\frac{1}{4}\beta _1.~~~~\eqno~~~~~~~~~(2.21)
\]
We claim that $S(y_2,{C^*},\beta_{2})\subset  S(y_1,{C^*},\beta_{1}) $. In fact, pick a point $y^{*}\in S(y_2,{C^*},\beta_{2})$.
Then we get that
$y^{*}(y_2)\geq \sigma_{{C^*}}\left(y_2\right)-\beta_{2}$.
Therefore, by the formula $C^* \subset B\left(M^{*}\right)$  and $y^{*}\in S\left(y_2,{C^*},\beta_{2}\right)$, we get that $\left\|y^{*}\right\|\leq 1$.
Noticing that  $\beta_{2}\in \left(0,\beta_1/256\right)$, by the inequality $y^{*}(y_2)\geq \sigma_{{C^*}}(y_2)-\beta_{2}$
 and the formula (2.21), we obtain that
\[
y^{*}(y_1)~=~y^{*}\left(y_2\right)-y^{*}\left(y_2-y_1\right)\quad\quad \quad \quad \quad \quad\quad \quad \quad\quad \quad \quad\quad \quad \quad\quad
\]
\[
\quad \quad \quad \quad \quad\quad\geq~ y^{*}(y_2)~-~\left\|y^{*}\right\|\cdot\left\|y_2-y_1\right\|\quad \quad\quad \quad \quad \quad \quad \quad\quad \quad \quad\quad \quad \quad \quad \quad \quad \quad \quad \quad \quad \quad \quad \quad \quad \quad \quad  \quad \quad\quad \quad \quad \quad \quad
\]
\[
\quad \quad \quad \quad \quad\quad\geq~ \sigma_{{C^*}}(y_2)~-~\beta_{2}~-~\left\|y^{*}\right\|\cdot\left\|y_2-y_1\right\|\quad \quad \quad \quad\quad \quad \quad \quad \quad \quad \quad \quad \quad \quad \quad \quad \quad \quad \quad \quad  \quad \quad\quad \quad \quad \quad \quad
\]
\[
 \quad \quad \quad \quad \quad\quad\geq ~\sigma_{{C^*}}(y_1)~- ~\sigma_{{C^*}}(y_1-y_2) ~-~\beta_{2}~-~\left\|y^{*}\right\|\cdot\left\|y_2-y_1\right\|\quad \quad \quad \quad\quad \quad \quad \quad \quad \quad \quad \quad \quad \quad \quad \quad \quad \quad \quad \quad  \quad\quad\quad \quad \quad \quad \quad \quad \quad\quad \quad \quad \quad \quad
\]
\[
 \quad \quad \quad \quad \quad\quad = ~ \sigma_{{C^*}}(y_1)~-~\sup \left\{ u^{*}(y_2-y_1): u^{*}\in C^{*}    \right \}~ -~\beta_{2}~-~\left\|y^{*}\right\|\cdot\left\|y_2-y_1\right\|\quad \quad \quad \quad\quad \quad \quad \quad \quad \quad \quad \quad \quad \quad \quad \quad \quad \quad \quad \quad  \quad \quad\quad \quad \quad \quad \quad \quad \quad\quad \quad \quad \quad \quad
\]
\[
 \quad \quad \quad \quad \quad\quad\geq ~\sigma_{{C^*}}(y_1)~-~\left\|y_2-y_1\right\| ~-~\beta_{2}~-~\left\|y_2-y_1\right\|\quad \quad \quad \quad\quad \quad \quad \quad \quad \quad \quad \quad \quad \quad \quad \quad \quad \quad \quad \quad  \quad \quad\quad \quad \quad \quad \quad \quad \quad\quad \quad \quad \quad \quad
\]
\[
\quad \quad \quad \quad \quad\quad\geq ~\sigma_{{C^*}}(y_1)~-~\frac{1}{2}\beta_{1} ~-~\beta_{2}\quad \quad \quad \quad\quad \quad \quad \quad \quad \quad \quad \quad \quad \quad \quad \quad \quad \quad \quad \quad  \quad\quad \quad \quad \quad \quad \quad\quad \quad\quad \quad \quad \quad \quad
\]
\[
 \quad \quad \quad \quad \quad\quad\geq ~\sigma_{{C^*}}(y_1)~-~\beta_{1}.\quad \quad \quad \quad\quad \quad \quad \quad \quad \quad \quad \quad \quad \quad \quad \quad \quad \quad \quad \quad  \quad \quad\quad \quad \quad \quad \quad
\]
It follows that $y^{*}\in  S(y_1,{C^*},\beta_{1}) $. Hence we get that $S(y_2,{C^*},\beta_{2})\subset  S(y_1,{C^*},\beta_{1}) $.

\textbf{Step 2.} We  define the weak$^{*}$ bounded closed convex subset $N_{2}^{*}$ of $M^*$, where
\[
N_{2}^{*}=\left\{ {x^*} \in {M^*}:\left\| {{x^*}} \right\| \leq \left(\frac{64}{\beta_{2}\cdot 2^{-2}}\right)  \sup_{{x^*} \in {C^*}} \left\| {{x^*}} \right\|,~{x^*}(y_2) = 0\right\} .
\]
Define the set $C_{2}^{*}=\{x^{*}\in C^{*}:x^{*}(y_2)\leq 0 \}$. Since $C^{*}$ is weak$^{*}$ closed, we get that
$C_{2}^{*}$ is a
weak$^{*}$ closed convex subset of $M^{*}$. We next will prove that there exists a bounded weak$^{*}$ closed convex subset of $K_{2,2}^{*}$ of $ X^{*}$  with $I^{*}(K_{2,2}^{*})= C_{2}^{*}$ such that
\[
\left \langle y^{*}, x_1\right\rangle=0~~~\quad~~~~~~~~~\mathrm{whenever}~~~~~~~~~~~~\quad~~y^{*}\in K_{2,2}^{*}.
\]
In fact, we define the subspace $M(x_1)= \mathrm{span} \{x_1, M\}$ of $X$. We claim that $M(x_1)$ is a closed subspace of $X$.
If $x_1\in M$ then $M(x_1)=M$. It follows that $M(x_1)$ is a closed subspace of $X$. Let  $x_1\notin M$. Then, by the Hahn-Banach Theorem, there
exists a functional $f\in S(X^{*})$ such that
\[
f(x_1)=\mathrm{dist}\left(x_1, M\right)>0~~\quad~~~~~~~~~~\mathrm{and}~~~~~~~~~~~~~\quad~~~~~M\subset N\left(f\right)=\left\{ x\in X: f(x)=0              \right\}.
\]
Therefore, by $M(x_1)= \mathrm{span} \{x_1, M\}$, it is easy to see that there exists a point $y_x$ $\in M$  and a real number $\alpha_x\in R$ such that
\[
x=\alpha_x x_1 +y_{x}~\quad~~~~~~~~~~\mathrm{whenever}~~~~~~~~~~~~~\quad~~x\in M\left(x_1\right).
\]
Pick a sequence $\{x_n\}_{n=1}^{\infty}\subset M\left(x_1\right)$ so that $\left\|x_n - x_0\right\| \to 0$ as $n \to \infty$. Then there exist two sequence $\{\alpha_n\}_{n=1}^{\infty}\subset R$ and $\{y_n\}_{n=1}^{\infty}\subset M$ such that
\[
x_n=\alpha_{n} x_1 +y_n~\quad~~~~~~~~~~\mathrm{for}~~~~~~~~~~~~\mathrm{every}~~~~~~~~~~~~~~\quad~~n\in N.
\]
Since $\{y_n\}_{n=1}^{\infty}\subset M$, we obtain that $f(x_n)= \alpha_{n} f(x_1)$. Since $\{x_n\}_{n=1}^{\infty}$ is a Cauchy sequence, we get that $\{f(x_n)\}_{n=1}^{\infty}$ is a Cauchy sequence.
It follows that $\{\alpha_n\}_{n=1}^{\infty}$ is a Cauchy sequence. Let $\alpha_n \to \alpha_0\in R$ as $n \to \infty$. Since $\{x_n\}_{n=1}^{\infty}$ is a Cauchy sequence, by $x_n=\alpha_{n} x_1 +y_n$,
we obtain that $\{y_n\}_{n=1}^{\infty}$ is a Cauchy sequence. Let
$y_n \to y_0\in M$ as $n \to \infty$. Then, by $x_n=\alpha_{n} x_1 +y_n$, we get that
\[
x_0=\mathop {\lim}\limits_{n \to \infty} x_n=\mathop {\lim}\limits_{n \to \infty} \left(\alpha_{n} x_1 +y_n\right)=\alpha_{0} x_1 +y_0\in M\left(x_1\right).
\]
It follows that $M(x_1)$ is a closed subspace of $X$. Moreover, we define the Banach space $\left(M(x_1), \|\cdot\|_1\right)$, where
\[
\|\alpha x_1 +y\|_1= \|\alpha x_1\| +\|y\|~\quad~~~~~~~~~\mathrm{whenever}~~~~~~~~~~~~\quad~~~\alpha \in R,~y\in M.
\]
Therefore, by $\left\| \alpha x_1 +y\right\| \leq \|\alpha x_1\| +\|y\|$, we get that $\|\cdot\|$ and $\|\cdot\|_1$ are equivalent. Hence
there exists a real number $d_0\in (0,+\infty)$ such that
\[
 \left\| \alpha x_1 +y\right\| \leq \|\alpha x_1\| +\|y\|\leq d_0 \left\| \alpha x_1 +y\right\|~\quad~~~~~~~~~\mathrm{whenever}~~~~~~~~~~~~\quad~~~\alpha \in R,~y\in M.
\]
Pick a point ${x^*} \in {C^*}$. Define the linear functional $\overline{{x^*}}$ in the space $M(x_1)$, where
\[
\left\langle \overline{{x^*}} , \alpha x_1 +y \right\rangle= \left\langle {x^*} , y\right\rangle~\quad~~~~~~~~~\mathrm{whenever}~~~~~~~~\quad~~~~\alpha \in R,~y\in M.
\]
Noticing that $C^*\subset B\left(M^{*}\right)$, we get that  $\sup \left\{\|{x^*}\|: {x^*}\in {C^*} \right\}\leq 1$.
Moreover, for every $y\in M$, we have the following inequalities
\[
\left|\left\langle \overline{{x^*}} , \alpha x_1 +y \right\rangle\right|  = \left|\left\langle x^* , y \right\rangle\right|  \leq \left\|x^*\right\| \left\|y\right\|\leq \left\|{x^*}\right\| \left(   \left\| \alpha x_1\right\|+ \left\|y\right\| \right)\leq 2\left(d_0\left\| \alpha x_1 +y \right\|\right).
\]
This implies that $\overline{{x^*}}$ is a bounded linear functional in $M\left(x_1\right)$. Therefore, by the Hahn-Banach Theorem, there exists a functional $F_{x^*}\in X^{*}$ such that
\[
\|F_{x^*}\|= \|\overline{{x^*}}\|\leq 2d_0~\quad~~~~~~~~~\mathrm{and}~~~~~~~~~~~~\quad~~ F_{x^*}\left(z\right)= \left\langle \overline{{x^*}}, z\right\rangle~\quad~~~~~~~~~\mathrm{whenever}~~~~~~~~~~~~\quad~~z\in M\left(x_1\right).
\]
Hence, for every ${x^*} \in {C^*}$, there exists a functional $F_{x^*}\in X^{*}$ with $\|F_{x^*}\|\leq 2d_0$ so that $ F_{x^*}(x_1)=0$ and $ F_{x^*}(z)=x^*(z)$ whenever $z\in M$.
Define the weak$^{*}$ bounded closed convex subset $K_{2,2}^{*}$ of $ X^{*}$, where
\[
K_{2,2}^{*}=\overline{co}^{w^{*}}  \left\{  F_{x^*}\in X^{*}  :  x^*\in   C_2^*   \right\}.
\]
Then, by the definition of $K_{2,2}^{*}$, we get that $I^{*}(K_{2,2}^{*})= C_{2}^{*}$.
Moreover, we define the bounded  weak$^{*}$ closed convex subset $K_{2,1}^{*}$ of $X^{*}$, where
\[
K_{2,1}^{*}=K_{1}^{*}\cap (I^{*})^{-1}   \left(    \{ x^{*}\in C^{*}:x^{*}(y_2)\geq 0 \}   \right).
\]
Define the set $K_{2}^{*}= \overline{co}^{w^{*}}  \left(K_{2,1}^{*}\cup K_{2,2}^{*}\right)$. Then we get that $K_{2}^{*}$ is a bounded weak$^{*}$ closed convex subset of $X^{*}$.
We claim that $\sigma_{K_{2,1}^{*}}(x_1)>\sigma_{K_{2,2}^{*}}(x_1) $.
In fact, noticing that $x=y_1$,
by $\left\|x-x_1\right\| <256^{-1}\beta_1$ and the formula (2.21), we obtain that
\[
\left\|    x_1-y_2         \right\|\leq\left\|    x_1-y_1         \right\|+\left\|    y_1-y_2        \right\|\leq \frac{1}{256}\beta_1+\frac{1}{4}\beta_1<\frac{1}{3}\beta_1.
\]
Therefore, by  $z^{*}\in C^{*}$ and $C^{*}\subset B\left(M^{*}\right)$, we obtain that $\left\|z^{*}\right\|\leq 1$.
Noticing that $z^{*}(x)=a$, $I^{*}(x_{0}^{*})=z^{*}$  and $\beta_{1}= $ $\min\left\{ r/256, a/256\right\}$, by the inequalities $\left\|z^{*}\right\|\leq 1$ and $\left\|x_0^{*}\right\|\leq 1$, we have the following inequalities
\begin{eqnarray*}
z^{*}\left(y_2\right) &=& z^{*}\left(x\right)-x_0^{*}\left(x-y_2\right)
\\
 &\geq& z^{*}\left(x\right)-x_0^{*}\left(x-x_1\right)-x_0^{*}\left(x_1-y_2\right)
\\
 &\geq& z^{*}\left(x\right)-\left\|x_0^{*}\right\|\left\|x-x_1\right\|-\left\|x_0^{*}\right\|\left\|x_1-y_2\right\|
\\
 &\geq& a-\frac{1}{256}\beta_1-\frac{1}{3}\beta_1 > 0.
\end{eqnarray*}
Then, by the definition of $C_{2}^{*}$, we get that
$z^{*}\notin C_{2}^{*}$. Further, by the definition of $K_{2,1}^{*}$, we get that
$x_0^{*}\in K_{2,1}^{*}$. Moreover, by the definition of $K_{2,2}^{*}$, we obtain that $x^{*}\left(x_1\right)=0$ for every $x^{*}\in  K_{2,2}^{*}$.
From the proof of Step 1, we have $x_0^{*}\left(x_1\right)>a/2$. Therefore, by $x_0^{*}\in K_{2,1}^{*}\subset K_{1}^{*}$ and $x_0^{*}\left(x_1\right)>a/2$, we get that
\[
\sigma_{K_{2,1}^{*}}\left(x_1\right)\geq x_0^{*}\left(x_1\right)>0=\sigma_{K_{2,2}^{*}}\left(x_1\right).
\]

\par We pick a point $x^{*}\in K_{1}^{*}$ such that $x^{*}(x_1)\geq a/2$. Then, by $\left\|    x_1-y_2         \right\|\leq \beta_1/3$, it is easy to see that $x^{*}(y_2)>0$. This implies that
$x^{*}\in K_{2,1}^{*} \subset K_{2}^{*} $. Hence $\sigma_{K_{1}^{*}}\left(x_1\right)$ $\leq\sigma_{K_{2}^{*}}\left(x_1\right)$.
Pick two points $x_1^{*}\in K_{2,1}^{*}$ and $x_2^{*}\in K_{2,2}^{*}$ such that $x_1^{*}(x_1)>0$. Then, by $K_{2,1}^{*}\subset K_{1}^{*}$ and $x_1^{*}(x_1)>0$, we get that
\[
\left\langle  \lambda x_1^{*}  +(1-\lambda ) x_2^{*} ,x_1        \right\rangle=\lambda x_1^{*}\left(x_1\right)\leq x_1^{*}\left(x_1\right)\leq \sigma_{K_{2,1}^{*}}\left(x_1\right)\leq \sigma_{K_{1}^{*}}\left(x_1\right)~~~~~~~~~~~~~~~~~~\eqno~~~~~~~~~~~~~~(2.22)
\]
for all $\lambda\in [0,1]$. Since $K_{2,1}^{*}$ and $K_{2,2}^{*}$ are weak$^{*}$ closed and convex, by the formula (2.22) and
$K_{2}^{*}= \overline{co}^{w^{*}}  \left(K_{2,1}^{*}\cup K_{2,2}^{*}\right)$, we have $\sigma_{K_{1}^{*}}\left(x_1\right)\geq \sigma_{K_{2}^{*}}\left(x_1\right)$.
Hence $\sigma_{K_{1}^{*}}\left(x_1\right)$ $= \sigma_{K_{2}^{*}}\left(x_1\right)$.
Pick $\eta\in \min \{  [ \sigma_{K_{2,1}^{*}}\left(x_1\right)-\sigma_{K_{2,2}^{*}}\left(x_1\right) ], \beta_1/256       \}$.
 We claim that
\[
I^{*}\left( K_{2}^{*}\right)\backslash \left[I^{*}\left( K_{2}^{*}\backslash S\left(x_1,\frac{1}{4}\eta, K_{2}^{*}\right)\right)\right]\neq \emptyset.~~~~~~~~~~~~~~~~~~~~~~~~~~~~~\eqno~~~~~~~~~~~~~~(2.23)
\]
In fact, we know that the set $K_{1}^{*}$ is a minimal element of $J$. Hence we get that
\[
I^{*}\left( K_{1}^{*}\right)\backslash \left[I^{*}\left( K_{1}^{*}\backslash S\left(x_1,\frac{1}{4}\eta, K_{1}^{*}\right)\right)\right]\neq \emptyset.
\]
We pick a point $u_0^{*}\in I^{*}\left( K_{1}^{*}\right)\backslash \left[I^{*}\left( K_{1}^{*}\backslash S\left(x_1,\eta/4, K_{1}^{*}\right)\right)\right]$. Then, by the separation Theorem, there exists
a point $y\in S\left(M\right)$ and a real number $d\in (0,1)$ such that
\[
\sigma_{C^{*}}(y)-2d>u_0^{*}(y)-4d\geq \sup \left\{ y^*(y) :  y^*\in     I^{*}\left( K_{1}^{*}\backslash S\left(x_1,\frac{1}{4}\eta, K_{1}^{*}\right)\right)                 \right\}.
\]
Therefore, by the above inequalities and $I^{*}(K_{1}^{*})=C^*$, it is easy to see that
\[
S\left(y,C^*,d\right)\cap I^{*}\left( K_{1}^{*}\backslash S\left(x_1,\frac{1}{4}\eta, K_{1}^{*}\right)\right)=\emptyset.~~~~~~~~~~~~~~~~~~~~~~~~~~~~~~~~~~~~~~~~~~\eqno~~~~~~~~~~~~~~~~~~~~~~~~~~~~(2.24)
\]
Since $C^*$ is weak$^{*}$ compact, there exists a point $y^{*}\in S\left(y,C^*,d\right)$ so that $\sigma_{C^*}(y)=y^{*}(y)$.
Therefore, by the definition of $\eta$, we get that $S\left(x_1,\eta/4, K_{1}^{*}\right)\subset K_{2,1}^{*}$. Since
$\sigma_{K_{1}^{*}}(x_1)= \sigma_{K_{2}^{*}}\left(x_1\right)$,
by $K_{2}^{*}= \overline{co}^{w^{*}}  \left(K_{2,1}^{*}\cup K_{2,2}^{*}\right)$ and the formula (2.24), it is easy to see that
\[
y^{*}\in I^{*}\left(  S\left(x_1,\frac{1}{4}\eta, K_{1}^{*}\right)\right) = I^{*}\left(  S\left(x_1,\frac{1}{4}\eta, K_{2,1}^{*}\right)\right) \subset I^{*}\left(  S\left(x_1,\frac{1}{4}\eta, K_{2}^{*}\right)\right).
\]
Suppose that there exists a point
$
x^{*}\in K_{2}^{*}\backslash S\left(x_1,\eta/4, K_{2}^{*}\right)
$
such that $I^{*}(x^{*})=y^{*}$.
Since $K_{2,1}^{*}$ and $K_{2,2}^{*}$ are convex, by $K_{2}^{*}= \overline{co}^{w^{*}} \left(K_{2,1}^{*}\cup K_{2,2}^{*}\right)$,
there exists a net
\[
\left\{ \lambda_{\alpha} x_{1,\alpha}^{*} + (1-\lambda_{\alpha} )x_{2,\alpha}^{*} \right\}_{\alpha\in\Delta} \subset  co \left(K_{2,1}^{*}\cup K_{2,2}^{*}\right)
\]
so that $\lambda_{\alpha} x_{1,\alpha}^{*} + (1-\lambda_{\alpha} )x_{2,\alpha}^{*} \xrightarrow{w^{*}} x^{*}$, where $x_{1,\alpha}^{*}\in K_{2,1}^{*}$, $x_{2,\alpha}^{*}\in K_{2,2}^{*}$
and $\lambda_{\alpha}\in [0,1]$. Therefore, by the definition of $K_{2,2}^{*}$, we get that  $\langle I^{*}(x_{2,\alpha}^{*}),y_2 \rangle\leq 0$ for every $\alpha\in\Delta$.
Noticing that $\{x^{*}\in M^{*}: \|x^{*}\|\leq a\}\subset \mathrm{int}\left({C^*}\right)$, we have $\sigma_{K_{1}^{*}}\left(y_2\right)\geq a$.
Pick $x_{\eta}^{*}\in $ $S\left(x_1,\eta /4, K_{1}^{*}\right)$.
Then, by $x_0^{*}\left(x_1\right)>a/2 $ and $\left\|x_1-y_2\right\| \leq \beta_1/3$, we obtain that
\[
~x_{\eta}^{*}\left(y_2\right)~=~x_{\eta}^{*}\left(x_1\right)- x_{\eta}^{*}\left(x_1-y_2\right) \quad\quad\quad \quad\quad\quad \quad\quad
\]
\[
\quad \quad\quad\quad \quad\quad\quad\quad \quad\quad\geq~ \sigma_{K_{1}^{*}}\left(x_1\right)~-~\frac{1}{4}\eta ~-~ x_{\eta}^{*}\left(x_1-y_2\right)\quad\quad\quad \quad\quad\quad \quad\quad\quad\quad\quad \quad\quad\quad \quad\quad\quad\quad\quad \quad\quad\quad \quad\quad\quad \quad\quad\quad\quad\quad \quad\quad\quad\quad\quad \quad\quad\quad \quad\quad
\]
\[
\quad \quad\quad\quad \quad\quad\quad\quad \quad\quad\geq ~\sigma_{K_{1}^{*}}\left(x_1\right)~-~\frac{1}{4}\eta~-~ \left\|x_{\eta}^{*} \right\| \cdot\left\|x_1-y_2\right\|\quad \quad\quad\quad \quad\quad\quad \quad\quad\quad \quad\quad\quad \quad\quad\quad \quad\quad\quad \quad\quad\quad \quad\quad
\]
\[
\quad \quad\quad\quad \quad\quad\quad\quad \quad\quad\geq ~\sigma_{K_{1}^{*}}\left(x_1\right)~-~\frac{1}{4}\eta~-~ \left\|x_1-y_2\right\|\quad \quad\quad\quad \quad\quad\quad \quad\quad\quad \quad\quad\quad \quad\quad\quad \quad\quad\quad \quad\quad\quad \quad\quad
\]
\[
\quad \quad\quad\quad \quad\quad\quad\quad \quad\quad\geq ~x_0^{*}\left(x_1\right)~-~\frac{1}{4}\eta~-~ \left\|x_1-y_2\right\|\quad \quad\quad\quad \quad\quad\quad \quad\quad\quad \quad\quad\quad \quad\quad\quad \quad\quad\quad \quad\quad\quad \quad\quad
\]
\[
\quad \quad\quad\quad \quad\quad\quad\quad \quad\quad> ~\frac{1}{2}a~-~ \frac{1}{4}\eta~-~ \frac{1}{3}\beta_1>\frac{1}{4}a ~>~0. \quad \quad\quad\quad \quad\quad\quad \quad\quad\quad \quad\quad\quad \quad\quad\quad \quad\quad\quad \quad\quad\quad \quad\quad\quad\quad\quad \quad\quad\quad \quad\quad\quad\quad\quad \quad\quad
\]
Since $I^{*}( K_{1}^{*})=C^*$, by the above inequalities and $\langle I^{*}(x_{2,\alpha}^{*}),y_2 \rangle\leq 0$, we get that
\[
I^{*}(x_{2,\alpha}^{*})\in I^{*}\left( K_{1}^{*}\backslash S\left(x_1,\frac{1}{4}\eta, K_{1}^{*}\right)\right).
\]
Therefore, by the formula (2.24), we have $\langle I^{*}(x_{2,\alpha}^{*}),y \rangle\leq \sigma_{C^*}(y)-d $ for all $\alpha\in\Delta$.
Moreover, by $x_{1,\alpha}^{*}\in K_{2,1}^{*}$, we have $\langle I^{*}(x_{1,\alpha}^{*}),y\rangle\leq \sigma_{C^*}(y)$ for all $\alpha\in\Delta$. Therefore, by $\lambda_{\alpha} x_{1,\alpha}^{*} + (1-\lambda_{\alpha} )x_{2,\alpha}^{*} \xrightarrow{w^{*}} x^{*}$ and $\langle I^{*}(x_{2,\alpha}^{*}),y \rangle\leq \sigma_{C^*}(y)-d $, we obtain that
\begin{eqnarray*}
\sigma_{C^*}(y)&=& \left\langle I^{*}\left(x^{*}\right),y  \right\rangle
\\
 &=& \mathop {\lim}\limits_{\alpha\in\Delta}\left\langle I^{*}\left(\lambda_{\alpha} x_{1,\alpha}^{*} + (1-\lambda_{\alpha} )x_{2,\alpha}^{*}\right),y  \right\rangle
\\
&\leq & \mathop {\lim\sup}\limits_{\alpha\in\Delta} \left[ \lambda_{\alpha}\left\langle I^{*}(x_{1,\alpha}^{*}),y  \right\rangle \right] + \mathop {\lim\sup}\limits_{\alpha\in\Delta}\left[\left(1-\lambda_{\alpha} \right)\left\langle I^{*}(x_{2,\alpha}^{*}),y  \right\rangle \right]
\\
&\leq & \mathop {\lim\sup}\limits_{\alpha\in\Delta} \left[\lambda_{\alpha} \sigma_{C^*}(y) \right]+ \mathop {\lim\sup}\limits_{\alpha\in\Delta} \left[ \left(1-\lambda_{\alpha} \right) \left( \sigma_{C^*}(y)-d \right) \right]
\leq  \sigma_{C^*}(y).
\end{eqnarray*}
Hence we get that $\lambda_{\alpha}\to 1$. Therefore, by $x_{1,\alpha}^{*}\in K_{2,1}^{*}$, we obtain that
$x^{*}\in  K_{2,1}^{*} \subset K_{1}^{*}$.  Noticing that $ I^{*}(x^{*})=y^{*}$ and $y^{*}\in S\left(y,C^*,d\right)$, by $x^{*}\in  K_{1}^{*}$ and
the formula (2.24), we have $x^{*}\in  S\left(x_1,\eta/4, K_{1}^{*}\right)$. Since $\eta\leq \sigma_{K_{2,1}^{*}}(x_1)-\sigma_{K_{2,2}^{*}}(x_1)$ and $\sigma_{K_{1}^{*}}\left(x_1\right)$ $= \sigma_{K_{2}^{*}}\left(x_1\right)$, by $S\left(x_1,\eta/4, K_{1}^{*}\right)\subset K_{2,1}^{*}$
and $K_{2}^{*}= \overline{co}^{w^{*}} \left(K_{2,1}^{*}\cup K_{2,2}^{*}\right)$,   we  have
\[
x^{*}\in  S\left(x_1,\frac{1}{4}\eta, K_{1}^{*}\right)= S\left(x_1,\frac{1}{4}\eta, K_{2,1}^{*}\right) \subset  S\left(x_1,\frac{1}{4}\eta, K_{2}^{*}\right),
\]
which contradicts $x^{*}\in K_{2}^{*}\backslash S\left(x_1,\eta/4, K_{2}^{*}\right)$. Hence the formula (2.23) is true.
Let $H_2^{*}= K_{2,2}^{*} \cap (I^{*})^{-1}(N_2^{*})$. Then, by $0\in \mathrm{int}\left(C^{*}\right)$ and the definition of $K_{2,2}^{*}$, we get that
$H_2^{*}$ is a nonempty bounded weak$^{*}$ closed convex subset of $X^{*}$.
Define
\[
X_2^{*}= \left\{ \lambda x^{*}  :  \lambda\in R^{+},~ x^{*}\in  K_2^{*} ,~~ x^{*}|_{M}\notin \mathrm{int}|_{M^{*}}(C^{*} )              \right\}.
\]
Similar to the proof of Step 1, we get that $K_{2}^{*}\supset X_2^{*} \cap (I^{*})^{-1}(C^{*})$. Define the set
\[
M_2^{*}= \left\{ \lambda x^{*}  :  \lambda\in R^{+},~ x^{*}\in  H_2^{*} ,~~ x^{*}|_{M}\notin \mathrm{int}|_{M^{*}}(C^{*} )              \right\}.
\]
Then, by the definition of $M_2^{*}$ and $H_2^{*}= K_{2,2}^{*} \cap (I^{*})^{-1}(N_2^{*})$,  we get that $M_2^{*}\subset X_2^{*}$.
Since $0\in \mathrm{int}\left(C^{*}\right)$, we define the nonempty subset $L_{2}^{*}$ of $X^{*}$, where
\[
L_{2}^{*}=M_{2}^{*}\cap (I^{*})^{-1} (N_{2}^{*})\subset X_2^{*}.
\]
Then, by the definitions of $K_{2,2}^{*}$ and $M_{2}^{*}$, we get that $I^{*}(L_{2}^{*})=N_{2}^{*}$.
Since $N_{2}^{*}$ is a bounded weak$^{*}$ closed convex subset of $M^{*}$, by $K_{2}^{*}\supset X_2^{*} \cap (I^{*})^{-1}(C^{*})$ and the definition of $L_{2}^{*}$, we get that $L_{2}^{*}$ is a bounded subset of $X^{*}$.
Define the two sets
\[
D_{2}^{*}=\overline{co}^{w^{*}} \left(K_{2}^{*}\cup L_{2}^{*}\right)~~\quad~~~~~~~~~~~~~~\mathrm{and}~~~~~~~~~~~~~~~~~~\quad~~~~~~~~D_{2,0}^{*}=\overline{co}^{w^{*}} \left(C^{*}\cup N_{2}^{*}\right).
\]
Then we get that $D_{2}^{*}$ is a bounded weak$^{*}$ closed convex subset of $X^{*}$ and $D_{2,0}^{*}$ is a bounded weak$^{*}$ closed convex subset of $M^{*}$. Moreover, we have $D_{2,0}^{*}=I^{*}(D_{2}^{*})$.

\par Pick a real number $\eta_{2}\in (0,\min\left\{\eta_{1}/4, \eta\right\})$. Pick $x^{*}\in K_{1}^{*}$ so that $x^{*}(x_1)>a/2$. Then, by
$\|x_1-y_2\|<\beta_1/3$, we obtain that $x^{*}(y_2)>0$. This implies that $x^{*}\in K_{2,1}^{*}$.
Therefore, by $K_{2,1}^{*} \subset K_{1}^{*}$, it is easy to see that
$\sigma_{K_{2,1}^{*}}(x_1)=\sigma_{K_{1}^{*}}(x_1)$.
Moreover, by $\sigma_{K_{2,1}^{*}}(x_1)=\sigma_{K_{1}^{*}}(x_1)$ and $K_{2,1}^{*} \subset K_{1}^{*}$, we get that
\[
S\left(x_1,K_{2,1}^{*},\frac{1}{4}\eta_2 \right)+ B\left(0,\frac{1}{4}\eta_2 \right)\subset S\left(x_1,K_{1}^{*},\frac{1}{4}\eta_2 \right)+ B\left(0,\frac{1}{4}\eta_2 \right).
\]
Noticing that $K_{2,1}^{*}$ and $K_{2,2}^{*}$ are two bounded weak$^{*}$ closed convex sets,
by Lemma 2.3 and $\sigma_{K_{2,1}^{*}}(x_1)>\sigma_{K_{2,2}^{*}}(x_1) $,  there exists a real number $\theta_2\in (0,\eta_{2})$ such that
\begin{eqnarray*}
\quad\quad\quad\quad S\left(x_1,K_{2}^{*},\frac{1}{4}\theta_2 \right)&\subset& S\left(x_1,K_{2,1}^{*},\frac{1}{4}\eta_2 \right)+ B\left(0,\frac{1}{4}\eta_2 \right)
\\
&\subset& S\left(x_1,K_{1}^{*},\frac{1}{4}\eta_2 \right)+ B\left(0,\frac{1}{4}\eta_2 \right).\quad\quad\quad\quad\quad (2.25)
\end{eqnarray*}
Moreover, by the definition of $L_{2}^{*}$, we have $x^{*}(x_1)=0$ for each $x^{*}\in L_{2}^{*}$. Therefore,
by $x_0^{*}\left(x_1\right)>a/2$ and $x_0^{*}\in K_{2,1}^{*}$, we have $\sigma_{K_{2}^{*}}(x_1)>\sigma_{L_{2}^{*}}(x_1) $.
Since $L_{2}^{*}$ is bounded,  by  Lemma 2.3, there exists a real number $\lambda_2\in (0,\min\{\theta_{2},\eta/4\} )$ such that
\[
S\left(x_1,D_{2}^{*},\lambda_2\right )\subset S\left(x_1,K_{2}^{*},\frac{1}{4}\theta_{2} \right)+B\left(0,\frac{1}{4}\theta_{2} \right).~~~~\eqno~~~~~~~~~(2.26)
\]
Therefore, by the formula (2.25) and $\theta_2\in (0,\eta_{2})$, we have the following formulas
\[
S\left(x_1,D_{2}^{*},\lambda_2\right )- S\left(x_1,D_{2}^{*},\lambda_2\right )\quad\quad \quad \quad \quad \quad \quad \quad \quad \quad \quad \quad \quad \quad\quad \quad \quad\quad \quad
\]
\[
\subset S\left(x_1,K_{2}^{*},\frac{1}{4}\theta_{2} \right)~-~S\left(x_1,K_{2}^{*},\frac{1}{4}\theta_{2} \right)~+~B\left(0,\frac{1}{4}\theta_{2} \right)~-~B\left(0,\frac{1}{4}\theta_{2} \right)\quad \quad \quad \quad \quad \quad \quad \quad \quad \quad\quad \quad \quad \quad \quad \quad \quad \quad \quad \quad \quad \quad \quad \quad \quad \quad\quad \quad \quad \quad \quad \quad
\]
\[
\subset S\left(x_1,K_{2}^{*},\frac{1}{4}\theta_{2} \right)~-~S\left(x_1,K_{2}^{*},\frac{1}{4}\theta_{2} \right)~+~B\left(0,\frac{1}{2}\theta_{2} \right)\quad \quad \quad \quad \quad \quad \quad \quad \quad \quad \quad \quad \quad \quad \quad \quad \quad \quad\quad \quad \quad \quad \quad \quad \quad \quad
\]
\[
\subset S\left(x_1,K_{1}^{*},\frac{1}{4}\eta_2 \right)~-~ S\left(x_1,K_{1}^{*},\frac{1}{4}\eta_2 \right)~+~ B\left(0,\frac{1}{4}\eta_2 \right)~+~ B\left(0,\frac{1}{4}\eta_2 \right)~+~B\left(0,\frac{1}{2}\theta_{2} \right)\quad \quad\quad \quad \quad \quad \quad \quad \quad \quad \quad \quad \quad \quad \quad \quad \quad \quad
\]
\[
\subset S\left(x_1,K_{1}^{*},\frac{1}{4}\eta_2 \right)~-~ S\left(x_1,K_{1}^{*},\frac{1}{4}\eta_2 \right)~+~ B\left(0,\eta_2 \right).\quad \quad \quad \quad \quad \quad \quad \quad\quad \quad \quad  (2.27)\quad \quad \quad \quad  \quad \quad \quad \quad \quad \quad \quad \quad
\]
We next prove that $ D_{2,0}^{*}\backslash I^{*}(D_{2}^{*} \backslash S\left(x_1,\lambda_2,D_{2}^{*}\right )) \neq \emptyset$. In fact, we have proved that
\[
I^{*}\left( K_{2}^{*}\right)\backslash I^{*}\left( K_{2}^{*}\backslash S\left(x_1,\lambda_2, K_{2}^{*}\right)\right)\neq \emptyset.
\]
Pick a point $y^{*}\in I^{*}( K_{2}^{*})\backslash I^{*}\left( K_{2}^{*}\backslash S(x_1,\lambda_2, K_{2}^{*})\right)$. Then
$y^{*}\notin  I^{*}\left( K_{2}^{*}\backslash S(x_1,\lambda_2, K_{2}^{*})\right)$.
Therefore, by $I^{*}( K_{2}^{*})=C^{*}$,
there exists a point $x^{*}\in S\left(x_1,\lambda_2, K_{2}^{*}\right)$ such that
\[
y^{*}=I^{*}(x^{*})\notin I^{*}\left( K_{2}^{*}\backslash S\left(x_1,\lambda_2, K_{2}^{*}\right)\right).
\]
Suppose that $I^{*}(x^{*})\in I^{*}\left(D_{2}^{*} \backslash S\left(x_1,\lambda_2,D_{2}^{*} \right)\right)$. Then there exists a point $z^*\in D_{2}^{*} \backslash $ $ S\left(x_1,\lambda_2,D_{2}^{*}\right )$
so that $I^{*}(z^{*})=I^{*}(x^{*})$. Noticing that $x^{*}\in S\left(x_1,\lambda_2, K_{2}^{*}\right)$, we have
\[
I^{*}(z^{*})=I^{*}(x^{*})\in I^{*}(K_{2}^{*})= C^{*}.
\]
Therefore, by $z^*\in D_{2}^{*} \backslash S(x_1,\lambda_2,D_{2}^{*})$ and $I^{*}(z^{*})\notin I^{*}\left( K_{2}^{*}\backslash S\left(x_1,\lambda_2, K_{2}^{*}\right)\right)$, we have
$z^*\notin K_{2}^{*}$. Moreover, by the definition of $D_{2}^{*}$, we get that
$z^*\in X_{2}^{*}$.
Noticing that $K_{2}^{*}\supset X_{2}^{*} \cap (I^{*})^{-1} C^{*}$,
by $z^*\notin K_{2}^{*}$ and $z^*\in X_{2}^{*}$, we get that $I^{*}(z^{*})\notin C^{*}$, this is a contradiction.
 Hence we have
$I^{*}(x^{*})\notin $ $I^{*}(D_{2}^{*} \backslash S\left(x_1,\lambda_2,D_{2}^{*}\right ))$. It follows that
\[
I^{*}\left(x^{*}\right)\in D_{2,0}^{*}\backslash I^{*}\left(D_{2}^{*} \backslash S\left(x_1,\lambda_2,D_{2}^{*}\right )\right).
\]
Hence we get that  $ D_{2,0}^{*}\backslash I^{*}\left(D_{2}^{*} \backslash S(x_1,\lambda_2,D_{2}^{*} )\right) \neq \emptyset$. Similar to the   proof of Step 1, by
$ D_{2,0}^{*}\backslash I^{*}\left(D_{2}^{*} \backslash S(x_1,\lambda_2,D_{2}^{*} )\right) \neq \emptyset$, we obtain that there exists a point $y_3\in S(M)$ and a real number
$\beta_3\in (0, \min \{\beta_2/256, \lambda_2/256\})$ such that
\[
S\left({y_3},D_{2,0}^{*},\beta_{3}\right)\subset D_{2,0}^{*}\backslash I^{*}\left(D_{2}^{*} \backslash S\left(x_1,\lambda_2,D_{2}^{*}\right )\right).
\]
Therefore, by $I^{*}\left(D_{2}^{*}\right)=D_{2,0}^{*}$, we obtain that $S\left({y_3},D_{2,0}^{*},\beta_{3}\right)\subset I^{*}\left( S\left(x_1,\lambda_2,D_{2}^{*} \right)\right)$.
We pick a point $y^{*}\in S({y_3},D_{2,0}^{*},\beta_{3})$. Noticing that $\eta_{2}\in (0,\eta_{1}/4)$, by the formulas (2.25) and (2.26),
there exists a point
$x^{*} \in S\left(x_1,\lambda_2,D_{2}^{*} \right)$ with $I^{*}(x^{*})=y^{*}$ so
that $\|x^{*}\|\leq 2$.
Moreover, by the proof of Step 1, there exists a point $x_0^{*}\in K_{1}^{*}$ so that $I^{*}(x_0^{*})=z^{*}$.
Noticing that $\|x-x_1\|<256^{-1}\beta_1(\gamma_0+1)^{-1}(\theta+1)^{-1}$, $z^{*}(x)=a$ and  $\lambda_2\in (0,\min\{\theta_{2},\eta\})$, by $I^{*}(x_0^{*})=z^{*}$ and $\|y_2-x_1\|<\beta_1/3$, we obtain that
\[
y^{*} (y_2)=x^{*} (y_2)~=~x^{*} (x_1)-x^{*} (x_1-y_2)\quad \quad\quad \quad\quad \quad\quad \quad
\]
\[
  \quad\quad\quad \quad \quad \quad\quad \quad\quad \quad\quad \quad\geq~ \sigma_{D_{2}^{*}}\left(x_1\right)~-~\lambda_{2}~-~x^{*} \left(x_1-y_2\right)\quad \quad \quad \quad\quad \quad \quad \quad\quad \quad \quad \quad\quad \quad \quad \quad\quad \quad \quad \quad\quad \quad \quad \quad\quad\quad \quad \quad \quad
\]
\[
 \quad \quad\quad \quad\quad \quad\quad \quad \quad \quad\quad \quad\geq~ x_0^{*}\left(x_1\right)~-~x^{*} \left(x_1-y_2\right)~-~\lambda_{2}\quad \quad \quad \quad\quad \quad \quad \quad\quad \quad \quad \quad\quad \quad \quad \quad\quad \quad \quad \quad\quad \quad \quad \quad\quad\quad \quad \quad \quad
\]
\[
  \quad\quad\quad \quad \quad\quad \quad\quad \quad \quad\quad \quad\geq~ x_0^{*}\left(x\right)~- ~ x_0^{*} \left(x-x_1\right)       ~-~ x^{*} \left(x_1-y_2\right)~-~\lambda_{2}\quad \quad \quad \quad\quad \quad \quad \quad\quad \quad \quad \quad\quad \quad \quad \quad\quad \quad \quad \quad\quad \quad \quad \quad\quad\quad \quad \quad \quad
\]
\[
  \quad\quad\quad\quad \quad\quad \quad \quad \quad \quad\quad \quad\geq~ z^{*}\left(x\right)~-~  \left\|x_0^{*}\right\| \left\|x_1-x\right\|     ~ -~\left\|x^{*} \right\|\left\|x_1-y_2\right\|~-~\lambda_{2}\quad \quad \quad \quad\quad \quad \quad \quad\quad \quad \quad \quad\quad \quad \quad \quad\quad \quad \quad \quad\quad \quad \quad \quad
\]
\[
 \quad \quad\quad \quad \quad\quad \quad\quad \quad \quad\quad \quad\geq~ a~-~\frac{1}{256}\beta_1~-~\frac{2}{3}\beta_1~-~\lambda_{2}>\frac{1}{2}a. \quad \quad \quad \quad\quad \quad \quad \quad\quad \quad \quad \quad\quad \quad \quad \quad\quad \quad \quad \quad\quad \quad \quad \quad
\]
Since $y^{*}\in S\left({y_3},D_{2,0}^{*},\beta_{3}\right)$ is arbitrary, by the above inequalities, we obtain that
\[
S\left({y_3},D_{2,0}^{*},\beta_{3}\right)\subset \left\{ x^{*}\in M^{*} :    x^{*}(y_2)>0                 \right\}.
\]
Since $S\left({y_3},D_{2,0}^{*},\beta_{3}\right)\subset I^{*}\left( S\left(x_1,\lambda_2,D_{2}^{*}\right )\right)$ and $S\left(x_1,\lambda_2,D_{2}^{*} \right)\subset S\left(x_1,D_{2}^{*} ,\lambda_2\right)$, by the formula (2.27), we have the following formula
\[
S\left({y_3},D_{2,0}^{*},\beta_{3}\right)- S\left({y_3},D_{2,0}^{*},\beta_{3}\right)  \subset        I^{*}\left( S\left(x_1,\lambda_2,D_{2}^{*} \right)- S\left(x_1,\lambda_2,D_{2}^{*} \right)   \right)        \subset I^{*}(V_2),
\]
where
\[
V_2= \left[ S\left(x_1,K_{1}^{*},\frac{1}{4}\eta_2 \right)- S\left(x_1,K_{1}^{*},\frac{1}{4}\eta_2 \right)+ B\left(0,\eta_2 \right)\right].
\]
Noticing that $D_{2,0}^{*}=\overline{co}^{w^{*}} \left(C^{*}\cup N_{2}^{*}\right)$ and $S\left({y_3},D_{2,0}^{*},\beta_{3}\right)\subset \{ x^{*}\in M^{*} :    x^{*}(y_2)>0                 \}$,
by $z^{*}(y_2)>0$ and Lemma 2.4, we obtain that
\[
S\left({y_3},C^*,\beta_{3}\right)-S\left({y_3},C^*,\beta_{3}\right)  \subset        I^{*}\left( S\left(x_1,\lambda_2,D_{2}^{*} \right)- S\left(x_1,\lambda_2,D_{2}^{*}\right )   \right)   \subset I^{*}\left(V_2\right),
\]
\[
S\left({y_3},C^*,\beta_{3}\right)\subset \{ {x^*} \in {M^*}:{x^*}(y_2) > 0\}~~\quad~~~~~~\mathrm{and}~~~~~~~~~\quad~~~~~~~~~\|y_2-y_3\|<\frac{1}{2^{2}}\beta _2.
\]
Similar to the  proof of Step 1, we obtain that $S\left(y_3,{C^*},\beta_{3}\right)\subset  S\left(y_2,{C^*},\beta_{2}\right)$.

\par \textbf{Step 3.} In this step, we perform the iteration via the second-stage scheme and calibrate the relevant parameters to sustain iterative convergence. This procedure is necessary for infinite iteration processes.

\par Repeat the previous process, for every natural number $n\in N$,
we  define the weak$^{*}$ bounded closed convex subset $N_{n}^{*}$ of $M^*$, where
\[
N_{n}^{*}=\left\{ {x^*} \in M^*:\left\| {{x^*}} \right\| \leq \left(\frac{64}{\beta_{n}\cdot 2^{-n}}\right)  \sup_{{x^*} \in {C^*}} \left\| {{x^*}} \right\|,~{x^*}(y_n) = 0\right\} .
\]
Define the weak$^{*}$ closed convex set $C_{n}^{*}=\{x^{*}\in C^{*}:x^{*}(y_n)\leq 0 \}$. Then we obtain that $C_{n}^{*}$ is a bounded weak$^{*}$
closed convex set. Repeat the previous proof, there exist $\{y_i\}_{i=1}^{n-1}$ and $\{\beta_i\}_{i=1}^{n-1}$ such that
$\|y_i-y_{i+1}\|<(2^{i})^{-1}\beta_{i}$  and $0<\beta_{i+1}< \beta_{i}/256$. for all $1\leq i \leq n-1$.
Therefore,
by the triangle inequality and $\|x-x_1\|<256^{-1}\beta_1$, we obtain the following inequalities
\[
\left\|    x_1-y_n         \right\| ~=~ \left\|    ( x_1-y_1 )       + \sum_{i=1}^{n-1}\left(y_i-y_{i+1}\right)  \right\|
\]
\[
\quad \quad \quad \quad \quad \quad \quad \quad \quad \quad \quad \quad \quad \leq~ \left\|    x_1-y_1         \right\|~+~ \left\| \sum_{i=1}^{n-1}\left(y_i-y_{i+1}\right)  \right\|\quad \quad \quad \quad \quad \quad \quad \quad \quad \quad \quad \quad \quad \quad \quad \quad \quad \quad \quad \quad \quad \quad \quad \quad
\]
\[
\quad \quad \quad \quad \quad \quad \quad \quad \quad \quad \quad \quad \quad\leq~ \left\|    x_1-y_1         \right\|~+~ \left(\sum_{i=1}^{n-1}\left\| y_i-y_{i+1}  \right\| \right)\quad \quad \quad \quad \quad \quad \quad \quad \quad \quad \quad \quad \quad \quad \quad \quad \quad \quad \quad \quad \quad \quad \quad \quad \quad \quad \quad \quad \quad \quad \quad \quad
\]
\[
\quad \quad \quad \quad \quad \quad \quad \quad \quad \quad \quad \quad \quad\leq~ \frac{1}{256}\beta_1~+~\frac{2}{3}\beta_1~<~\frac{3}{4}\beta_1. \quad \quad \quad \quad \quad \quad \quad \quad \quad \quad \quad \quad \quad \quad \quad \quad \quad \quad \quad \quad \quad \quad \quad \quad \quad \quad \quad \quad \quad \quad \quad \quad
\]
Therefore, by an argument similar to that in Step 2,
there exists a bounded weak$^{*}$ closed convex subset $K_{n,2}^{*}$ of $X^{*}$ with $I^{*}(K_{n,2}^{*})= C_{n}^{*}$ such that
\[
\left\langle y^{*}, x_1\right\rangle=0~~~\quad~~~~~~~~~\mathrm{whenever}~~~~~~~~~~~~\quad~~y^{*}\in K_{n,2}^{*}.
\]
Moreover, we define the bounded weak$^{*}$ closed convex subset $K_{n,1}^{*}$ of $X^{*}$, where
\[
K_{n,1}^{*}=K_{1}^{*}\cap (I^{*})^{-1}   \left(    \{ x^{*}\in C^{*}:x^{*}(y_n)\geq 0 \}   \right).
\]
Define the bounded weak$^{*}$ closed convex subset $K_{n}^{*}= \overline{co }^{w^{*}}\left(K_{n,1}^{*}\cup K_{n,2}^{*}\right)$ of $X^{*}$. We claim that $ \sigma_{K_{n,1}^{*}}\left(x_1\right)>\sigma_{K_{n,2}^{*}}\left(x_1\right)$.
In fact, by the proof of Step 1, there exists a point $x_0^{*}\in  K_{1}^{*}$ such that $I^{*}\left(x_0^{*}\right)=z^{*}$. Since $\left\|    x_1-y_n         \right\| \leq \left({3}/{4}\right)\beta_1$
and $\|z^{*}\|\leq 1$, by $\beta_{1}=\min\left\{ r/256,a/256\right\}$ and $z^{*}(x)=a$, we obtain that
\begin{eqnarray*}
z^{*}(y_n) &\geq& z^{*}(x)-z^{*}(x-x_1)-z^{*}(x_1-y_n)
\\
 &\geq& a-\left\|z^{*}\right\|\left\|x-x_1\right\|-\left\|z^{*}\right\|\left\|x_1-y_n\right\|
\\
 &\geq& a-\frac{1}{256}\beta_1-\frac{3}{4}\beta_1 > 0.
\end{eqnarray*}
Therefore, by the definition of $C_{n}^{*}$, we  get that
$z^{*}\notin C_{n}^{*}$. Therefore, by $I^{*}(x_0^{*})=z^{*}$
and the definition of $ K_{n,1}^{*}$, we get that
$x_0^{*}\in K_{n,1}^{*}$. From the proof of Step 2, we get that $x_0^{*}\left(x_1\right)>a/2$.
Moreover, by the definition of $  K_{n,2}^{*}$, we obtain that $x^{*}(x_1)=0$ whenever $x^{*}\in  K_{n,2}^{*}$.
Therefore, by $x_0^{*}\left(x_1\right)>a/2$, we have $\sigma_{K_{n,1}^{*}}\left(x_1\right)>\sigma_{K_{n,2}^{*}}\left(x_1\right) $. Moreover, by the definition of $K_{n}^{*}$ and $x_0^{*}\in K_{n,1}^{*}$,
we get that
$\sigma_{K_{1}^{*}}(x_1)=\sigma_{K_{n}^{*}}(x_1)$
and $\sigma_{K_{n}^{*}}(x_1)>a/2$.
Similar to the proof of Step 2, we obtain that
\[
I^{*}\left( K_{n}^{*}\right)\backslash I^{*}\left( K_{n}^{*}\backslash S\left(x_1,\frac{1}{4}\overline{\eta}, K_{n}^{*}\right)\right)\neq \emptyset~~~~~~~~\eqno~~~~~~~~~~~~~~(2.28)
\]
for sufficiently small $\overline{\eta}\in R^{+}$.  Define the weak$^{*}$ closed subspace $X_n^{*}$ of $X^{*}$, where
\[
X_n^{*}= \left\{ \lambda x^{*}  :  \lambda\in R^{+},~ x^{*}\in  K_n^{*},~ x^{*}|_{M}\notin \mathrm{int}|_{M^{*}} (C^{*})                \right\}.
\]
Then, similar to the proof of Step 1, we get that $K_{n}^{*}\supset X_n^{*} \cap (I^{*})^{-1}(C^{*})$. Define $H_n^{*}= K_{n,2}^{*} \cap (I^{*})^{-1}(N_n^{*})$.
Then, by $ 0\in \mathrm{int}\left(C^{*}\right)$ and the definition of $K_{n,2}^{*}$, we get that
$H_n^{*}$ is a nonempty bounded weak$^{*}$ closed convex subset of $X^*$. Define
\[
M_n^{*}= \left\{ \lambda x^{*}  :  \lambda\in R^{+},~ x^{*}\in  H_n^{*} ,~~ x^{*}|_{M}\notin \mathrm{int}|_{M^{*}}(C^{*} )              \right\}.
\]
Then $M_n^{*}\subset X_n^{*}$. Since $0\in \mathrm{int}C^{*}$, we define the nonempty subset $L_{n}^{*}$ of $X^{*}$, where
\[
L_{n}^{*}=M_{n}^{*}\cap (I^{*})^{-1} \left(N_{n}^{*}\right)\subset X_n^{*}.
\]
Then, by the definition of $M_n^{*}$,  we get that $I^{*}(L_{n}^{*})=N_{n}^{*}$.
Since $N_{n}^{*}$ is a bounded subset of $M^{*}$, by $K_{n}^{*}\supset  X_n^{*} \cap (I^{*})^{-1}(C^{*})$ and the definition of $L_{n}^{*}$,
we get that $L_{n}^{*}$ is a bounded subset of $X^{*}$.
Define the two sets
\[
D_{n}^{*}=\overline{co}^{w^{*}} \left(K_{n}^{*}\cup L_{n}^{*}\right)~~\quad~~~~~~~~~~~~~~\mathrm{and}~~~~~~~~~~~~~~~~~~\quad~~~~~~~~D_{n,0}^{*}=\overline{co}^{w^{*}} \left(C^{*}\cup N_{n}^{*}\right).
\]
Then we get that $D_{n}^{*}$ is a bounded weak$^{*}$ closed convex subset of $X^{*}$ and $D_{n,0}^{*}$ is a bounded weak$^{*}$ closed convex subset of $M^{*}$.
Moreover, we have $D_{n,0}^{*}=I^{*}(D_{n}^{*})$.

\par We have defined $\eta_{1}$ and $\eta_{2}$. Therefore,
by the preceding proof, $\eta_{n-1}$ has already been defined when the iteration proceeds to $n-1$.
Pick $\eta_{n}\in \left(0,\min\left\{ \eta_{n-1}/4, \overline{\eta}\right\}\right)$.
Pick $x^{*}\in K_{1}^{*}$ such that $x^{*}(x_1)>a/2$. Then, by
$\|x_1-y_n\|\leq (3/4) \beta_1$, we get that $x^{*}(y_n)>0$. Therefore, by the definition of $K_{n,1}^{*}$, we get that $x^{*}\in K_{n,1}^{*}$.
It follows that
$\sigma_{K_{n,1}^{*}}(x_1)=\sigma_{K_{1}^{*}}(x_1)$.
Moreover, since $K_{n,1}^{*}$ and $K_{n,2}^{*}$ are two bounded weak$^{*}$ closed convex subsets of $X^{*}$,
$\sigma_{K_{n,1}^{*}}\left(x_1\right)=\sigma_{K_{1}^{*}}\left(x_1\right) $ and $K_{n}^{*}= \overline{co }^{w^{*}}\left(K_{n,1}^{*}\cup K_{n,2}^{*}\right)$, by the inequality
$\sigma_{K_{n,1}^{*}}(x_1)>\sigma_{K_{n,2}^{*}}(x_1) $ and Lemma 2.3, there exists a real number $\theta_n \in (0,\eta_{n})$ such that
\begin{eqnarray*}
\quad\quad\quad\quad S\left(x_1,K_{n}^{*},\frac{1}{4}\theta_n \right)&\subset& S\left(x_1,K_{n,1}^{*},\frac{1}{4}\eta_n \right)+ B\left(0,\frac{1}{4}\eta_n \right)
\\
&\subset& S\left(x_1,K_{1}^{*},\frac{1}{4}\eta_n \right)+ B\left(0,\frac{1}{4}\eta_n \right).\quad\quad\quad\quad\quad (2.29)
\end{eqnarray*}
Moreover, by the definition of $L_{n}^{*}$, we get that $x^{*}(x_1)=0$ for every $x^{*}\in L_{n}^{*}$.
Then, by $\sigma_{K_{n}^{*}}(x_1)>a/2$, we get that $\sigma_{K_{n}^{*}}(x_1)>\sigma_{L_{n}^{*}}(x_1) $.
Since $K_{n}^{*}$ is a bounded weak$^{*}$ closed convex subset of $X^{*}$
and
$L_{n}^{*}$ is a bounded subset $X^{*}$,  by
$\sigma_{K_{n}^{*}}(x_1)>\sigma_{L_{n}^{*}}(x_1) $  and  Lemma 2.3,
there exists a real number $\lambda_n\in (0,\min\{\theta_{n},\overline{\eta}/4\} )$ such that
\[
S\left(x_1,D_{n}^{*},\lambda_n\right )\subset S\left(x_1,K_{n}^{*},\frac{1}{4}\theta_{n} \right)+B\left(0,\frac{1}{4}\theta_{n} \right).~~~~~~~\eqno~~~~~~~~~~~~~~(2.30)
\]
Noticing that $\lambda_n\in \left(0,\min\left\{\theta_{n},\overline{\eta}/4\right\} \right)$ and $\theta_n\in (0,\eta_{n})$, by the formulas (2.29) and (2.30), we  obtain the following inclusion relation
\begin{eqnarray*}
\quad \quad \quad \quad &&S\left(x_1,D_{n}^{*},\lambda_n\right )- S\left(x_1,D_{n}^{*},\lambda_n\right )
\\
&\subset& \left[ S\left(x_1,K_{1}^{*},\frac{1}{4}\eta_n \right)- S\left(x_1,K_{1}^{*},\frac{1}{4}\eta_n \right)+ B\left(0,\eta_n \right)\right].\quad \quad \quad \quad (2.31)
\end{eqnarray*}
Similar to the proof of Step 2, we have $ D_{n,0}^{*}\backslash I^{*}\left(D_{n}^{*} \setminus S\left(x_1,\lambda_n,D_{n}^{*}\right )\right) \neq \emptyset$.
Similar
to the proof of Step 2, by $ D_{n,0}^{*}\backslash I^{*}\left(D_{n}^{*} \setminus S\left(x_1,\lambda_n,D_{n}^{*}\right )\right) \neq \emptyset$, there exists a point $y_{n+1}\in S\left(M\right)$ and a real number $\beta_{n+1}\in \left(0,\min \left\{\beta_{n}/256, \lambda_{n}/256\right\}\right)$ such that
\[
S\left(y_{n+1},D_{n,0}^{*},\beta_{n+1}\right)\subset D_{n,0}^{*}\backslash I^{*}\left(D_{n}^{*} \backslash S\left(x_1,\lambda_n,D_{n}^{*}\right )\right).
\]
Therefore, by the above formula and $I^{*}\left(D_{n}^{*}\right)=D_{n,0}^{*}$, it is easy to see that
\[
S\left(y_{n+1},D_{n,0}^{*},\beta_{n+1}\right)\subset D_{n,0}^{*}\backslash I^{*}\left(D_{n}^{*} \backslash S\left(x_1,\lambda_n,D_{n}^{*}\right )\right)\subset                    I^{*}\left(S\left(x_1,\lambda_n,D_{n}^{*}\right )\right).
\]
We pick a point $y^{*}\in S\left(y_{n+1},D_{n,0}^{*},\beta_{n+1}\right)$. Since $\eta_{n}\in \left(0,\eta_{n-1}/4\right)$, by the formula
(2.29) and formula (2.30),
there exists a point
$x^{*} \in S(x_1,\lambda_n,D_{n}^{*} )$ with $I^{*}(x^{*})=y^{*}$ such that $\|x^{*}\|\leq 2$.
Noticing that $z^{*}(x)=a$, $\|x-x_1\|<256^{-1}\beta_1(\gamma_0+1)^{-1}(\theta+1)^{-1}$ and $\lambda_n\in (0,\min\{\theta_{n},\overline{\eta}\} )$, by $I^{*}(x_0^{*})=z^{*}$ and $\|y_n-x_1\|<(3/4)\beta_1$, we obtain that
\[
y^{*} (y_n)=x^{*} (y_n)~=~x^{*} \left(x_1\right)-x^{*} (x_1-y_n)\quad \quad\quad \quad\quad \quad\quad \quad
\]
\[
  \quad\quad\quad \quad \quad \quad\quad \quad\quad \quad\quad \quad\geq~ \sigma_{D_{n}^{*}}(x_1)~-~\lambda_n   ~-~x^{*} \left(x_1-y_n\right)\quad \quad \quad \quad\quad \quad \quad \quad\quad \quad \quad \quad\quad \quad \quad \quad\quad \quad \quad \quad\quad \quad \quad \quad\quad\quad \quad \quad \quad
\]
\[
 \quad \quad\quad \quad\quad \quad\quad \quad \quad \quad\quad \quad\geq~ x_0^{*}\left(x_1\right)~-~x^{*} \left(x_1-y_n\right)~-~\lambda_n\quad \quad \quad \quad\quad \quad \quad \quad\quad \quad \quad \quad\quad \quad \quad \quad\quad \quad \quad \quad\quad \quad \quad \quad\quad\quad \quad \quad \quad
\]
\[
  \quad\quad\quad \quad \quad\quad \quad\quad \quad \quad\quad \quad\geq~ x_0^{*}\left(x\right)~- ~ x_0^{*} \left(x-x_1\right)       ~-~x^{*} \left(x_1-y_n\right)~-~\lambda_n\quad \quad \quad \quad\quad \quad \quad \quad\quad \quad \quad \quad\quad \quad \quad \quad\quad \quad \quad \quad\quad \quad \quad \quad\quad\quad \quad \quad \quad
\]
\[
  \quad\quad\quad\quad \quad\quad \quad \quad \quad \quad\quad \quad\geq~ z^{*}\left(x\right)~-~  \left\|x_0^{*}\right\| \left\|x_1-x\right\|     ~ -~\left\|x^{*} \right\|\left\|x_1-y_n\right\|~-~\lambda_n\quad \quad \quad \quad\quad \quad \quad \quad\quad \quad \quad \quad\quad \quad \quad \quad\quad \quad \quad \quad\quad \quad \quad \quad
\]
\[
 \quad \quad\quad \quad \quad\quad \quad\quad \quad \quad\quad \quad\geq~ a~-~\frac{1}{256}\beta_1~-~\frac{6}{4}\beta_1~-~\lambda_n>\frac{1}{2}a. \quad \quad \quad \quad\quad \quad \quad \quad\quad \quad \quad \quad\quad \quad \quad \quad\quad \quad \quad \quad\quad \quad \quad \quad
\]
Since $y^{*}\in S\left(y_{n+1},D_{n,0}^{*},\beta_{n+1}\right)$ is arbitrary, by the above inequalities, we get that
\[
S\left(y_{n+1},D_{n,0}^{*},\beta_{n+1}\right)\subset \left\{ x^{*}\in M^{*} :    x^{*}(y_n)>0                 \right\}.
\]
Noticing that $I^{*}$ is a linear mapping and
$S\left(y_{n+1},D_{n,0}^{*},\beta_{n+1}\right)\subset I^{*}\left(S\left(x_1,\lambda_n,D_{n}^{*}\right )\right)$, by the formula (2.31),
we have the following formula
\[
S\left(y_{n+1},D_{n,0}^{*},\beta_{n+1}\right)- S\left({y_{n+1}},D_{n,0}^{*},\beta_{n+1}\right)\subset I^{*}(V_n),~~~\eqno~~~~~~~~~(2.32)
\]
where
\[
V_n= \left[ S\left(x_1,K_{1}^{*},\frac{1}{4}\eta_n \right)- S\left(x_1,K_{1}^{*},\frac{1}{4}\eta_n \right)+ B\left(0,\eta_n \right)\right].
\]
Noticing that $D_{n,0}^{*}=\overline{co}^{w^{*}} (C^{*}\cup N_{n}^{*})$, $S\left(y_{n+1},D_{n,0}^{*},\beta_{n+1}\right)\subset \{ x^{*}\in M^{*} :    x^{*}(y_n)>0                 \}$ and $z^{*}(y_n)>0    $, by  the formula (2.32) and Lemma 2.4, we obtain that
\[
S\left(y_{n+1},C^*,\beta_{n+1}\right)-S\left(y_{n+1},C^*,\beta_{n+1}\right)\subset I^{*}(V_n),
\]
\[
S\left(y_{n+1},C^*,\beta_{n+1}\right)\subset \{ {x^*} \in {M^*}:{x^*}(y_n) > 0\}~~\quad~~~~~~\mathrm{and}~~~~~~~~~\quad~~~~~~~~~\|y_n-y_{n+1}\|<\frac{1}{2^{n}}\beta _n.
\]
Similar to  the proof of Step 1, we get that $S\left(y_{n+1},{C^*},\beta_{n+1}\right)\subset  S\left(y_n,{C^*},\beta_{n}\right)$.

\textbf{Step 4.} In this step, we will complete the proof of the Theorem 2.1. Based on the iterative process from Step 1 to Step 3,
we construct two sequences $\{ {y_n}\} _{n = 1}^\infty  \subset S(M)$
and $\{ {\beta_n}\} _{n = 1}^\infty  \subset R^{+}$ such that
\par (a) $\left\| {{y_{n + 1}} - {y_n}} \right\| < {\beta _n} \cdot {2^{ - n}}$ for every natural number $n\in N$;
\par (b) $128{\beta _{n + 1}} < {\beta _n}$ for every natural number $n\in N$;
\par (c) $S( {y_{n + 1}},{C^*},\beta _{n + 1} ) \subset S( {y_n},{C^*},{\beta _n} )\subset   I^{*}(V_n) $
for every natural number
$n\in N$;
\par (d) $S\left( {y_{n + 1}},{C^*},\beta _{n + 1}\right ) \subset S\left( {y_n},{C^*},{\beta _n} \right)$
for every natural number $n\in N$.
\\
Then, by the conditions (a) and (b), we obtain that $\{ {y_n}\} _{n = 1}^\infty $ is a Cauchy sequence.
Since the space $X$ is complete, there exists a point $y\in X$ such that
$\| y_n -y\| \to 0$ as $n\to \infty$. Hence, for every $n\in N$, we have the following inequalities
\[
\left\| {y - {y_n}} \right\| ~=~ \mathop {\lim }\limits_{k \to \infty } \left\| {{y_k} - {y_n}} \right\|\quad\quad\quad\quad\quad\quad\quad\quad
\]
\[
\quad\quad\quad\quad\quad\quad \quad\quad\quad\quad\quad\quad=~ \mathop {\lim }\limits_{k \to \infty } \left\|\sum\nolimits_{i = 1}^{k - n}  \left[y_{n + i - 1} - y_{n + i} \right] \right\|\quad \quad\quad\quad\quad \quad\quad\quad\quad \quad\quad\quad\quad \quad\quad\quad\quad \quad\quad\quad\quad \quad\quad\quad
\]
\[
\quad\quad\quad\quad\quad\quad \quad\quad\quad\quad\quad\quad\le~ \mathop {\lim \sup }\limits_{k \to \infty } \left(\sum\nolimits_{i = 1}^{k - n} {\left\| y_{n + i - 1} - y_{n + i} \right\|}\right)\quad \quad\quad\quad\quad \quad\quad\quad\quad \quad\quad\quad\quad \quad\quad\quad\quad \quad\quad\quad\quad \quad\quad\quad
\]
\[
\quad\quad\quad\quad\quad\quad \quad\quad\quad\quad\quad\quad\le~ \mathop {\lim \sup }\limits_{k \to \infty } \left(\sum\nolimits_{i = 1}^{k - n} { \frac{1}{2^{ n + i-1}}\beta _n }\right)\quad\quad\quad\quad\quad\quad \quad\quad\quad\quad\quad\quad\quad\quad\quad\quad\quad\quad \quad\quad\quad\quad\quad\quad\quad\quad\quad\quad\quad\quad \quad\quad\quad\quad\quad\quad
\]
\[
\quad\quad\quad\quad\quad\quad \quad\quad\quad\quad\quad\quad\le~ \left(\frac{1}{2^{n-1}}\right) {\beta _n}.\quad\quad\quad \quad\quad\quad\quad \quad\quad\quad\quad \quad\quad\quad\quad \quad \quad\quad\quad\quad \quad\quad\quad\quad\quad\quad\quad\quad \quad\quad\quad\quad \quad\quad\quad
\]
We claim that $S( {y,{C^*},{\beta _n }/4} ) \subset S( y_n ,C^*,\beta _n  )$ whenever $n\geq 4$. In fact, we pick a point $y^{*}\in S\left( y,C^*,{\beta _n }/4 \right)$ whenever $n\geq 4$. Then
${y^*}(y) \ge \sup \left\{ {y^*}(y):y^*\in C^{*}\right\}  - ({\beta _n }/4)$ whenever $n\geq 4$.
Since the set $C^{*}$ is a weak$^{*}$ compact subset of $X^{*}$,  there exists
a point $y^{*}(n)\in S\left( {y_{n},{C^*},{\beta _n }/4} \right)$ such that
\[
\left\langle{y^*}( n ),y_{n}\right\rangle = \sup \left\{ {y^*}(y_{n}):{y^*} \in {C^*}    \right  \}.
\]
Therefore, by $C^* \subset B\left (M^*\right)$, we get that $\|y^{*}(n)\|\leq 1$ for all $n\in N$. Noticing that  $y^{*}\in S\left( y,{C^*},{\beta _n }/4 \right)$, by $\left\| {y - {y_n}} \right\|\leq \beta _n/2^{n-1}$, we have the following inequalities
\[
~~~~~~~{y^*}({y_n }) ~=~ {y^*}(y) - [{y^*}(y) - {y^*}({y_n })]~~~\quad\quad\quad \quad~~\quad\quad\quad\quad \quad~~~~~\quad\quad\quad \quad\quad\quad\quad \quad~~~~~~~~~~~~~~~~~~~~~~~~~~~~~~~~~~~~~~~~~~~~~~~~~~~~~~~~~~~~~~~~~~~~~~~~~~~~~~~~~~~~~~~~~~~~~~~~~~~~~~~~~~~
\]
\[
\quad\quad\quad \quad\quad \quad \ge~ \sup \left\{ {y^*}(y):{y^*} \in {C^*}\right\}  ~-~ \frac{1}{4}{\beta _n} ~-~  [{y^*}(y) - {y^*}({y_n })]~~~~~~~~\quad \quad \quad\quad \quad \quad \quad \quad \quad \quad \quad \quad \quad \quad \quad \quad \quad \quad \quad\quad \quad \quad \quad \quad\quad\quad \quad \quad \quad \quad \quad ~~~~~~~~~~~~~~~~~~~~~~~~
\]
\[
\quad\quad\quad \quad\quad \quad  \ge~ \sup \left\{ {y^*}(y):{y^*} \in {C^*}\right\}  ~-~ \frac{1}{4}{\beta _n} ~-~ \left\| y^{*} \right\|\cdot\left\| y - {y_n } \right\|~~~~~~~~\quad \quad \quad\quad \quad \quad \quad \quad \quad \quad \quad \quad \quad \quad \quad \quad \quad \quad \quad\quad \quad \quad \quad \quad\quad\quad \quad \quad \quad \quad \quad ~~~~~~~~~~~~~~~~~~~~~~~~
\]
\[
\quad\quad\quad \quad\quad \quad  \ge~ \sup \left\{ {y^*}(y):{y^*} \in {C^*}\right\}  ~-~ \frac{1}{4}{\beta _n} ~-~ \left\| y - {y_n } \right\|~~~~~~~~\quad \quad \quad\quad \quad \quad \quad \quad \quad \quad \quad \quad \quad \quad \quad \quad \quad \quad \quad\quad \quad \quad \quad \quad\quad\quad \quad \quad \quad \quad \quad ~~~~~~~~~~~~~~~~~~~~~~~~
\]
\[
\quad\quad\quad \quad\quad \quad  \ge~\left\langle{y^*}(n ),y\right\rangle~-~ \frac{1}{4}{\beta _n} ~-~ \left\| y - y_n  \right\|~~~~~~~~\quad \quad \quad\quad \quad \quad \quad \quad \quad \quad \quad \quad \quad \quad \quad \quad \quad \quad \quad\quad \quad \quad \quad \quad\quad\quad \quad \quad \quad \quad \quad ~~~~~~~~~~~~~~~~~~~~~~~~
\]
\[
\quad\quad\quad \quad\quad \quad\ge~ \langle{y^*}(n ),{y_n }\rangle ~-~ \left[\langle{y^*}(n),{y_n }\rangle -\langle{y^*}(n ),y\rangle\right] ~-~ \frac{1}{4}{\beta _n } ~-~\left\| {y - {y_n }} \right\|~~~~\quad \quad \quad\quad \quad \quad \quad \quad \quad \quad \quad \quad \quad \quad \quad \quad \quad \quad \quad\quad \quad \quad \quad \quad\quad\quad \quad \quad \quad \quad \quad ~~~~~~~~~~~~~~~
\]
\[
\quad\quad\quad \quad\quad \quad\ge~ \langle{y^*}(n ),{y_n}\rangle ~-~ \left\|{y^*}(n)\right\|\cdot\left\| y - {y_n } \right\| ~-~ \frac{1}{4}{\beta _n } ~-~ \left\| y - {y_n } \right\|~~\quad \quad \quad\quad \quad \quad \quad \quad \quad \quad \quad \quad \quad \quad \quad \quad \quad \quad \quad\quad \quad \quad \quad \quad\quad\quad \quad \quad \quad \quad \quad ~~~~~~~
\]
\[
\quad\quad\quad \quad\quad \quad\ge~ \langle{y^*}(n ),{y_n }\rangle ~-~ \left\| {y - {y_n}} \right\| ~-~ \frac{1}{4}{\beta _n } ~-~ \left\| {y- {y_n}} \right\|~\quad \quad \quad\quad \quad \quad \quad \quad \quad \quad \quad \quad \quad \quad \quad \quad \quad \quad \quad\quad \quad \quad \quad \quad\quad\quad \quad \quad \quad \quad \quad ~
\]
\[
\quad\quad\quad \quad\quad \quad\ge~ \langle{y^*}(n ),{y_n }\rangle ~-~ \frac{1}{8}{\beta _n} ~-~ \frac{1}{4}{\beta _n} ~-~ \frac{1}{8}{\beta _n }~~~\quad \quad \quad\quad \quad \quad \quad \quad \quad \quad \quad \quad \quad \quad \quad \quad \quad \quad \quad\quad \quad \quad \quad \quad\quad\quad \quad \quad \quad \quad \quad ~~
\]
\[
\quad\quad\quad \quad\quad \quad\ge~ \sup \left\{ {y^*}(y_{n}):{y^*} \in {C^*}\right\}   ~-~\frac{1}{2}{\beta _n }~\quad \quad \quad\quad \quad \quad \quad \quad \quad \quad \quad \quad \quad \quad \quad \quad \quad \quad \quad\quad \quad \quad \quad \quad\quad\quad \quad \quad \quad \quad \quad ~~~
\]
whenever $n\geq 4$.
This implies that
$
{y^*}({y_n})\ge \sup \left\{ {y^*}(y_{n}):{y^*} \in {C^*}\right\}  - \beta _n
$
whenever $n\geq 4$. Therefore, by ${y^*}  \in {C^*}$, we obtain that ${y^*}\in S\left( {y_{n},{C^*},{\beta _n }} \right)$ whenever $n\geq 4$. Since $y^{*}\in S\left( y,C^*,\beta _n /4 \right)$ is arbitrary,
we obtain that
\[
S\left( {y,{C^*},\frac{1}{4}{\beta _n }} \right) \subset S\left( {{y_n },{C^*},{\beta _ n}} \right)~~\quad~~~~~~\mathrm{whenever}~~~~~~~~~\quad~~~~~~~~~ n\geq 4.~~~~\eqno~~~~~~~~~(2.33)
\]
We define the set $S\left( y,C^* ,0\right)= \{y^{*}\in C^*: y^{*}(y)= \sigma_{C^*}(y) \}$. Since
the set $C^{*}$ is a weak$^{*}$  bounded closed convex subset of $X^{*}$, we get that $S\left( y,C^* ,0\right)$ is a
nonempty weak$^{*}$ bounded closed convex subset of $X^{*}$.
Therefore, by the
formula (2.33)
and
the condition (c), we have the following  inclusion relations
\begin{eqnarray*}
\quad \quad\quad S\left( {y,{C^*},0} \right)-S\left( {y,{C^*},0} \right)
&\subset& S\left( {y,{C^*},\frac{1}{4}{\beta_{n+1}}} \right)-S\left( y,{C^*},\frac{1}{4}{\beta_{n+1}} \right)
\\
&\subset& S\left( {y_{n + 1}},{C^*},{\beta_{n+1}} \right) - S\left( {y_{n + 1}},{C^*},{\beta_{n+1}} \right)
\\
&\subset& I^{*}\left(V_n\right)  \quad  \quad \quad  \quad \quad  \quad \quad  \quad\quad  \quad \quad \quad \quad  \quad ~(2.34)
\end{eqnarray*}
for all natural number $n\geq 4$. Moreover, it is easy to see that if the nonempty set $S\left( y,C^* ,0\right)$ is a singleton, then the point is a weak$^{*}$
exposed point of $C^*$. Suppose that $S\left( y,C^* ,0\right)$ is not a singleton. Then there exist
two points $y_{1}^{*}\in  S\left( y,C^*,0 \right)$ and $y_{2}^{*} \in  S\left( y,C^* ,0\right)$ so that
$y_{1}^{*} \neq y_{2}^{*}$. Therefore, by the formulas $y_{1}^{*} \in C^*$ $ \subset M^{*}$ and $y_{2}^{*} \in C^* \subset M^{*}$,
there exists a point $x_0 \in S(E)$ such that
\[
\left\langle  y_{1}^{*} -y_{2}^{*} ,    x_0      \right\rangle \geq \frac{1}{2} \left\| y_{1}^{*} -y_{2}^{*}\right\|>0.~~~~~~~~~~~~~~~~~~~~~~~~~~~~~~~~~~~~~~~~\eqno~~~~~~~~~~~~~~~~~~~~~~~~~~~~~~~~~~~~~(2.35)
\]
Hence we define the weak$^{*}$ neighborhood $V_0$  of origin in $M^{*}$, where
\[
V_0= \left\{ x^{*} \in M^{*}:    \left| \left\langle x^{*} ,    x_0      \right\rangle \right|<   \frac{1}{10} \left\| y_{1}^{*} -y_{2}^{*}\right\|         \right\}.
\]
Since the functional $\sigma_{K_{1}^{*}}$ is G$\mathrm{\mathrm{\hat{a}}}$teaux differentiable at the point $x_1\in X$, by $\eta_n \to0$ and Lemma 2.2, we can assume without loss of generality that
\[
S\left(x_1,K_{1}^{*},\frac{1}{4}\eta_n \right)- S\left(x_1,K_{1}^{*},\frac{1}{4}\eta_n \right)\subset \left\{ x^{*} \in X^{*}:    \left| \left\langle x^{*} ,    x_0      \right\rangle \right|<   \frac{1}{64} \left\| y_{1}^{*} -y_{2}^{*}\right\|         \right\}.
\]
for every natural number $n\in N$. Moreover, since $\eta_n \to0$, we can assume without loss of generality that $\eta_n<128^{-1}\left\| y_{1}^{*} -y_{2}^{*}\right\| $ for every $n\in N$. It follows that
\[
B\left(0,\eta_n \right)\subset \left\{ x^{*} \in X^{*}:    \left| \left\langle x^{*} ,    x_0      \right\rangle \right|<   \frac{1}{64} \left\| y_{1}^{*} -y_{2}^{*}\right\|         \right\}
\]
for each natural number $n\in N$. Therefore, by the previous proof, we obtain that
\begin{eqnarray*}
 V_n &=& \left[ S\left(x_1,K_{1}^{*},\frac{1}{4}\eta_n \right)- S\left(x_1,K_{1}^{*},\frac{1}{4}\eta_n \right) +B\left(0,\eta_n \right)\right]
\\
& \subset& \left\{ x^{*} \in X^{*}:    \left| \left\langle x^{*} ,    x_0      \right\rangle \right|<   \frac{1}{64} \left\| y_{1}^{*} -y_{2}^{*}\right\|         \right\}
\\
&& + \left\{ x^{*} \in X^{*}:    \left| \left\langle x^{*} ,    x_0      \right\rangle \right|<   \frac{1}{64} \left\| y_{1}^{*} -y_{2}^{*}\right\|         \right\}
\\
&\subset&  \left\{ x^{*} \in X^{*}:    \left| \left\langle x^{*} ,    x_0      \right\rangle \right|<   \frac{1}{32} \left\| y_{1}^{*} -y_{2}^{*}\right\|         \right\}.
\end{eqnarray*}
Therefore, by the definition of $I^{*}$  and $x_0 \in S\left(E\right)$, we have the following formula
\begin{eqnarray*}
&&I^{*}\left\{ x^{*} \in X^{*}:    \left| \left\langle x^{*} ,    x_0      \right\rangle \right|<   \frac{1 }{32} \left\| y_{1}^{*} -y_{2}^{*}\right\|         \right\}
\\
&=&\left\{ x^{*} \in M^{*}:    \left| \left\langle x^{*} ,    x_0      \right\rangle \right|<   \frac{1}{32} \left\| y_{1}^{*} -y_{2}^{*}\right\|         \right\}.
\end{eqnarray*}
Therefore, by the above formula and the definition of $V_0$, we obtain that
\begin{eqnarray*}
I^{*}\left(V_n\right)&\subset& I^{*}\left(\left\{ x^{*} \in X^{*}:    \left| \left\langle x^{*} ,    x_0      \right\rangle \right|<   \frac{1}{32} \left\| y_{1}^{*} -y_{2}^{*}\right\|         \right\}\right)
\\
&=&\left\{ x^{*} \in M^{*}:    \left| \left\langle x^{*} ,    x_0      \right\rangle \right|<   \frac{1}{32} \left\| y_{1}^{*} -y_{2}^{*}\right\|         \right\}
\subset V_0
\end{eqnarray*}
for each natural number $n\in N$. Therefore, by the formula (2.34), we obtain that
\begin{eqnarray*}
y_{1}^{*} - y_{2}^{*}&\in& S\left( {y,{C^*}},0 \right)-S\left( {y,{C^*}},0 \right)
\\
&\subset& S\left( {y_{n + 1} },{C^*},{\beta_{n+1}} \right) - S\left( {y_{n + 1} },{C^*},{\beta_{n+1}} \right)
\\
&\subset& I^{*}\left(V_n\right) \subset V_0.
\end{eqnarray*}
Then, by the definition of $ V_0$, we get that $10^{-1}\left\| y_{1}^{*} -y_{2}^{*}\right\| > \left|  \left\langle  y_{1}^{*} -y_{2}^{*} ,    x_0      \right\rangle  \right|$.
Therefore, by  $10^{-1}\left\| y_{1}^{*} -y_{2}^{*}\right\| > \left|  \left\langle  y_{1}^{*} -y_{2}^{*} ,    x_0      \right\rangle  \right|$ and the formula (2.35), we obtain that
\[
\frac{1}{10} \left\| y_{1}^{*} -y_{2}^{*}\right\| > \left|  \left\langle  y_{1}^{*} -y_{2}^{*} ,    x_0      \right\rangle  \right| \geq  \left\langle  y_{1}^{*} -y_{2}^{*} ,    x_0      \right\rangle \geq \frac{1}{2} \left\| y_{1}^{*} -y_{2}^{*}\right\|>0,
\]
this is a contradiction. Hence we get that  the set
$S\left( y,{C^*},0 \right)$ is a singleton. Noticing that
$S\left( y,{C^*} ,0\right)\subset S\left( {y_{n+1}},{C^*},\beta_{n+1} \right)$ whenever $n\geq 4$, by
$S\left( y_{n + 1},{C^*},\beta_{n + 1} \right) \subset S\left( {y_n},{C^*},{\beta_n} \right)$ and $y_1=x$, we obtain that
\[
S\left( y,{C^*},0 \right)\subset S\left( y_1, {C^*},  \beta _1 \right)\subset S\left(x,{C^*},r\right).
\]
Since the set $S\left( y,{C^*},0 \right)$ is a singleton, by the above formula, we obtain that the
slice $S\left(x,{C^*},r\right)$ contains the weak$^{*}$ exposed points of ${C^*}$. It follows that
\[
S\left(x,{C^*},r\right) \cap \overline {co}^{w^{*}}  \left( {w^*}\exp {C^*} \right)\neq \emptyset,
\]
which contradicts the formula (2.15). Hence we obtain that ${C^*}=\overline {co}^{w^{*}}  ( {{w^*}\exp {C^*}} )$.
Then $\overline{co}^{w^{*}}(w^{*}\mathrm{exp} C^{*})$ has the non-empty interior.
Pick a point $z_0\in S(X)$. Then, by the Hahn-Banach Theorem, there exists a
functional $z_0^{*}\in S\left(X^{*}\right)$ so that $z_0^{*}(z_0)$ $=\|z_0\|=1$.
Hence we define the closed subspace $N(z_0^{*})$ of $X^{*}$, where
\[
N(z_0^{*})= \{ x\in X  : z_0^{*}(z)=0           \}.
\]
Since $\overline{co}^{w^{*}}(w^{*}\mathrm{exp} C^{*})$ has the non-empty interior for every
symmetric inner non-empty weak$^{*}$ closed bounded convex set $C^{*}\subset X^{*}$,
by Lemma 2.5, we obtain that $N(z_0^{*})$ is a G$\mathrm{\hat{a}}$teaux differentiability space.
Since $N(z_0^{*})$ is a hyperplane of $X$, we get that
$X$ is a G$\mathrm{\hat{a}}$teaux differentiability space, which completes the proof.
\end{proof}

\begin{corollary}
Suppose that $M$ is a closed subspace of a weak Asplund space $X$. Then $M$ is a G${\hat{a}}$teaux differentiability space.

\end{corollary}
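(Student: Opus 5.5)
The plan is to reduce the corollary to Theorem 2.1 through the elementary inclusion that every weak Asplund space is a G\^{a}teaux differentiability space. Once $X$ is known to be a G\^{a}teaux differentiability space, Theorem 2.1 applied to the closed subspace $M$ gives the conclusion immediately. All the work is therefore in the first step.

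\textbf{Step 1: weak Asplund implies G\^{a}teaux differentiability space.} Let $f$ be a continuous convex function on $X$, and take $D=X$ in Definition 1.1. Since $X$ is weak Asplund, there is a dense $G_{\delta}$ subset $G\subset X$ such that $f$ is G\^{a}teaux differentiable at every point of $G$. Because $G$ is dense, $f$ is G\^{a}teaux differentiable on a dense subset of $X$. As $f$ was arbitrary, $X$ satisfies Definition 1.2.

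If one prefers to read Definition 1.1 with a general open convex domain $D$, the same argument applies: take $D=X$, which is open and convex. Only density of $G$ is used, not the $G_{\delta}$ property.

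\textbf{Step 2: apply Theorem 2.1.} By Step 1, $X$ is a G\^{a}teaux differentiability space. Since $M$ is a closed subspace of $X$, Theorem 2.1 applies directly and shows that $M$ is a G\^{a}teaux differentiability space.

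There is no real obstacle here; the corollary is a formal consequence of Theorem 2.1. The only point needing care is to match the quantifiers in Definitions 1.1 and 1.2, namely that ``every continuous convex function'' on $X$ is covered by the weak Asplund hypothesis with $D=X$. Note also that the argument does not show $M$ is weak Asplund. Density alone does not give the $G_{\delta}$ property, so Problem 1.6 is not addressed by this reasoning.
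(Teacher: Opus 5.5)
Your proposal is correct and follows the paper's proof: the paper also reduces the corollary to Theorem 2.1 through the fact that every weak Asplund space is a G\^{a}teaux differentiability space. The paper simply calls that fact well known, whereas you verify it from Definitions 1.1 and 1.2 by noting that a dense $G_{\delta}$ set is in particular dense. Your closing remarks are also accurate: the argument says nothing about Problem 1.6, and the corollary is only as strong as Theorem 2.1, which both you and the paper use as a black box.
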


\begin{proof}
It is well known that if $X$ is a weak Asplund space, then  $X$ is a G$\mathrm{\hat{a}}$teaux differentiability space. Therefore,
by Theorem 2.1, it is easy to see that Corollary 2.6 is true, which completes the proof.
\end{proof}

\begin{theorem}
Suppose that $X$ is a Banach space. Then the following statements are equivalent:

\par (1) $X$ is a G$\hat{a}$teaux differentiability space;

\par (2) $\overline{co}^{w^{*}}(w^{*}\mathrm{exp} C^{*})$ has the non-empty interior for every
symmetric inner non-empty weak$^{*}$ closed bounded convex set $C^{*}\subset X^{*}$;

\par (3) $\overline{co}^{w^{*}}(w^{*}\mathrm{exp} C^{*})$ has the non-empty interior for every
inner non-empty weak$^{*}$ closed bounded convex set $C^{*}\subset X^{*}$;

\par (4) For any
symmetric inner non-empty weak$^{*}$ closed bounded convex set $C^{*}\subset$ $ X^{*}$, the functional $\sigma_{C^{*}}$ is G${\hat{a}}$teaux differentiable on a dense subset of $X$;

\par (5) For any
inner non-empty weak$^{*}$ closed bounded convex set $C^{*}\subset X^{*}$,  the functional $\sigma_{C^{*}}$ is G${\hat{a}}$teaux differentiable on a dense subset of $X$.

\end{theorem}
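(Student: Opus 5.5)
The plan is to prove the cycle of easy implications first and isolate the one implication that needs real work, namely getting back from a property of support functionals to arbitrary convex functions.

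\textbf{Easy implications.} The implications $(1)\Rightarrow(5)\Rightarrow(4)$ and $(3)\Rightarrow(2)$ are immediate.
\begin{itemize}
\item For $(1)\Rightarrow(5)$: if $C^{*}$ is bounded, then $\sigma_{C^{*}}$ is a Lipschitz convex function on $X$, so it is G\^ateaux differentiable on a dense set by the definition of a G\^ateaux differentiability space.
\item For $(5)\Rightarrow(4)$ and $(3)\Rightarrow(2)$: symmetric sets are special cases of general ones.
\end{itemize}

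\textbf{From differentiability to exposed points.} Next I would show $(5)\Rightarrow(3)$ and $(4)\Rightarrow(2)$ by one argument: I claim that $C^{*}=\overline{co}^{w^{*}}(w^{*}\exp C^{*})$, which certainly has nonempty interior when $C^{*}$ does.
\begin{itemize}
\item Suppose $y_0^{*}\in C^{*}$ lies outside the weak$^{*}$ closed convex hull of the exposed points. Separate exactly as in (2.14)--(2.15) to obtain $x\in S(X)$ and $r>0$ such that the slice $S(x,C^{*},r)$ misses that hull.
\item By density, pick $x'$ with $\|x-x'\|<r/(4\sup_{C^{*}}\|\cdot\|+4)$ at which $\sigma_{C^{*}}$ is G\^ateaux differentiable. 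By Theorem 1.5, $d\sigma_{C^{*}}(x')$ is a weak$^{*}$ exposed point of $C^{*}$, exposed by $x'$ (normalize $x'$ if necessary).
\item Since $d\sigma_{C^{*}}(x')$ attains $\sigma_{C^{*}}(x')$ and $\sigma_{C^{*}}$ is Lipschitz with constant $\sup_{C^{*}}\|\cdot\|$, a two-line estimate shows $d\sigma_{C^{*}}(x')\in S(x,C^{*},r)$. This is a contradiction.
\end{itemize}

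\textbf{From exposed points back to differentiability of $\sigma_A$.} For $(2)\Rightarrow(4)$ I would transplant Step 1 of the proof of Lemma 2.5 from $X^{*}\times R$ to $X^{*}$.
\begin{itemize}
\item Fix a symmetric $A\subset B(X^{*})$ with nonempty interior, a point $x\in S(X)$ and $\varepsilon\in(0,1/8)$. Set $N=\{x^{*}:x^{*}(x)=0,\ \|x^{*}\|\le 2/\varepsilon\}$ and $A_1=\overline{co}^{w^{*}}(A\cup N)$, which is symmetric with nonempty interior.
\item By Lemma 2.2, applied with $A\cup N$ as the weak$^{*}$ closed generating set, every weak$^{*}$ exposed point of $A_1$ lies in $A\cup N$.
\item Not all of them can lie in $N$: otherwise their closed convex hull would sit inside the hyperplane $\{x^{*}(x)=0\}$ and have empty interior, contradicting (2). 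So there is an exposed point of $A_1$ lying in $A$, exposed by some $y\in S(X)$.
\item That point is then also exposed in $A$, so $\sigma_A$ is G\^ateaux differentiable at $y$ and, by symmetry of $A$, at $-y$ (Theorem 1.5).
\item The bound $|\langle z^{*},y\rangle|\le 1$ on $N$ follows as in (2.8), and Lemma 1.9 then gives $\|x-y\|\le\varepsilon$ or $\|x+y\|\le\varepsilon$. Hence $\sigma_A$ is differentiable on a dense set.
\end{itemize}
The same argument gives $(3)\Rightarrow(5)$.

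\textbf{Main obstacle: $(4)\Rightarrow(1)$.} Step 2 of Lemma 2.5 reduces an arbitrary continuous convex $f$ near a point to the support functional $\sigma_{D^{*}(g,\lambda_0)}$ of a symmetric polar set, but that set lives in $X^{*}\times R$, not in $X^{*}$.
\begin{itemize}
\item What I would try first is the hyperplane trick used at the end of the proof of Theorem 2.1. Choose $z_0^{*}\in S(X^{*})$ and $z_0\in S(X)$ with $z_0^{*}(z_0)=1$, and let $H=N(z_0^{*})$. Then $X\cong H\times R$ via an isomorphism, so statement (4) for $X$ transports to statement (4) for $H\times R$, because symmetric bodies and dense sets are preserved by isomorphisms.
\item Running Step 2 of Lemma 2.5 with $H$ in place of $X$ shows that $H$ is a G\^ateaux differentiability space.
\item Then Fabian's theorem that $H\times R$ is again a G\^ateaux differentiability space gives (1) for $X$.
\end{itemize}
The delicate points are two. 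First, I must check that Step 2 of Lemma 2.5 only uses Step 1 through the conclusion ``$\sigma$ of symmetric bodies in $(H\times R)^{*}$ is differentiable densely''; this is exactly (4) for $H\times R$. Second, I must verify that the isomorphism $X\cong H\times R$ respects G\^ateaux differentiability, which holds because G\^ateaux differentiability is invariant under linear homeomorphisms.
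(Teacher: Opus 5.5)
Your proposal is correct and follows essentially the paper's route. The exposed-point argument for $(5)\Rightarrow(3)$ is the paper's. Splitting Lemma 2.5 into its Step 1 (transplanted to $X^{*}$) for $(2)\Rightarrow(4)$, and its Step 2 applied to a hyperplane $H$ together with $X\cong H\times R$ and Fabian's theorem for $(4)\Rightarrow(1)$, makes explicit the $X^{*}$ versus $X^{*}\times R$ mismatch that the paper's one-line appeal to Lemma 2.5 leaves implicit (the paper only does this hyperplane step at the end of the proof of Theorem 2.1).

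One correction: your side claim that the same argument gives $(3)\Rightarrow(5)$ fails as stated. Without symmetry of $A$ you cannot pass from $y$ to $-y$, so you only get a differentiability point near $x$ or near $-x$. This implication is not needed, however, because $(1)\Rightarrow(5)\Rightarrow(3)\Rightarrow(2)\Rightarrow(4)\Rightarrow(1)$ already closes the cycle.
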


\begin{proof}
It is well known that $X$ is a G$\mathrm{\hat{a}}$teaux differentiability space if and only if every bounded weak$^{*}$
closed convex subset of $X^{*}$ is the weak* closed convex hull of its weak$^{*}$ exposed points.

\par Pick an inner non-empty weak$^{*}$ closed bounded convex subset $C^{*}$ of $X^{*}$ so that
$\sigma_{C^{*}}$ is G$\mathrm{\hat{a}}$teaux differentiable on a dense subset of $X$. It follows that
\[
{C^*}=\overline {co}^{w^{*}}  \left( {{w^*}\exp {C^*}} \right).
\]
Since $C^{*}$ is an inner non-empty weak$^{*}$ closed bounded convex subset of $X^{*}$, we get that $\overline{co}^{w^{*}}(w^{*}\mathrm{exp} C^{*})$ has the non-empty interior.
Therefore, by Lemma 2.5, it is easy to see that Theorem 2.7 is true, which completes the proof.
\end{proof}

\textbf{Acknowledgement.} This research is supported by "China Natural Science Fund under grant 11561053".

\bibliographystyle{amsplain}

\end{document}